\newtheorem{theorem}{Theorem}[section]
\newtheorem{definition}[theorem]{Definition}
\newtheorem{corollary}[theorem]{Corollary}
\newtheorem{proposition}[theorem]{Proposition}
\newtheorem*{claim*}{Claim}
\theoremstyle{AppDefinition}
\theoremstyle{AppClaim}
\newtheorem{remark}[theorem]{Remark}
\newtheorem{example}[theorem]{Example}
\newtheorem*{example*}{Example}
\newtheorem{problem}[theorem]{Problem}
\def\R{{\mathbb R}}
\newcommand{\Trop}{\mathrm{Trop}}
\newcommand{\one}{{\bf{1}}}
\DeclareMathOperator*{\argmax}{\arg\!\max}
\def\beginmat{ \left( \begin{array} }
\def\endmat{ \end{array} \right) }
\begin{document}

\begin{frontmatter}
\title{Tropical Support Vector Machine  and its Applications to Phylogenomics}
\runtitle{Tropical Support Vector Machine}

\begin{aug}
  \author[A]{\fnms{Xiaoxian} \snm{Tang}\ead[label=e1]{xiaoxian@buaa.edu.cn}},
\author[B]{\fnms{Houjie} \snm{Wang}\ead[label=e2]{wanghoujie@tamu.edu}}
\and
\author[C]{\fnms{Ruriko} \snm{Yoshida}\ead[label=e3]{ryoshida@nps.edu}}
  \address[A]{School of Mathematical Sciences, Beihang University,
    \printead{e1}}

\address[B]{Department of Statistics, Texas A\&M University,
  \printead{e2}}
\address[C]{Department of Operations Research, Naval Postgraduate School,
  \printead{e3}}
\end{aug}

\begin{abstract}
Most data in genome-wide phylogenetic analysis (phylogenomics) is essentially multidimensional, posing a major challenge to human comprehension and computational analysis. Also, we can not directly apply statistical learning models in data science to a set of phylogenetic trees since the space of phylogenetic trees is not Euclidean.  In fact, the space of phylogenetic trees is a tropical Grassmannian in terms of max-plus algebra. Therefore, to classify  multi-locus data sets for phylogenetic analysis, we propose tropical support vector machines (SVMs). Like classical SVMs, a tropical SVM is a discriminative classifier defined by the tropical hyperplane which maximizes the minimum tropical distance from data points to itself in order to separate these data points into sectors (half-spaces) in the tropical projective torus. Both hard margin tropical SVMs and soft margin tropical SVMs  can be formulated as linear programming problems. We focus on classifying two categories of data, and we study a simpler case by assuming the data points from the same category ideally stay in the same sector of a tropical separating hyperplane.  
For hard margin tropical SVMs, we prove the necessary and sufficient conditions for two categories of data points to be separated, and
 we show an explicit formula for the optimal value of the feasible linear programming problem.  For soft margin tropical SVMs, we develop novel methods to compute an optimal tropical separating  hyperplane. Computational experiments show our methods work well.  We end this paper with open problems.
\end{abstract}


\begin{keyword}
\kwd{Phylogenetic Tree}
\kwd{Phylogenomics}
\kwd{Tropical Geometry}
\kwd{Supervised Learning}
\kwd{Non-Euclidean Data}
\end{keyword}

\end{frontmatter}

\section{Introduction}\label{sec:intro}
Multi-locus data sets in phylogenomics are essentially multidimensional.  Thus, we wish to apply tools from data science to analyse how phylogenetic trees of different genes (loci) are distributed over the space of phylogenetic trees.  For example, in many situations in systematic biology, we wish to classify multi-locus phylogenetic trees over the space of phylogenetic trees.  
  In order to apply tools from Phylogenomics to multi-locus data sets, systematists exclusively select alignments of protein or DNA sequences whose evolutionary histories are congruent to these  respective of their species. 
  In order to see how alignments with such evolutionary events differ from others and to extract important information from these alignments, systematists compare sets of multiple phylogenetic trees generated from different genomic regions to assess concordance or discordance among these trees across genes \cite{ane2007}.
  
  This problem appears not only in analysis on multi-locus phylogenetic data but also assessing convergence of Markov Chain Monte Carlo (MCMC) analyses for the Bayesian inference on phylogenetic tree reconstruction.  When we conduct an MCMC analysis on phylogenetic tree reconstruction, we run independent multiple chains on the same data and we have to ensure they converge to the same posterior distribution of trees.  Often this process is done by comparing summary statistics computed from  sampled trees, however, naturally computing a summary statistic from a sample loses information about the sample \cite{Wilgenbusch}.  
  
  In a Euclidean space, we apply a supervised learning method to classify data points into different categories.  
  A support vector machine (SVM) is one of the most popular supervised learning models for  classification.   In a Euclidean space, an SVM is a linear classifier, a hyperplane which  separates data into half-spaces and maximizes the minimum distances from data points to the hyperplane.  A space of all possible phylogenetic trees with the same set of labels on their leaves is unfortunately not Euclidean.  In addition, this space is a union of lower dimensional polyhedral cones inside of a Euclidean space \cite{SS}.  Therefore we cannot directly apply a classical SVM to a set of phylogenetic trees.  
  
  In 2004, Speyer and Sturmfels showed a space of phylogenetic trees with a given set of labels on their leaves is a tropical Grassmanian \cite{SS}, which is a tropicalization of a linear space defined by a set of linear equations \cite{YZZ} with the max-plus algebra.  Therefore, in this paper, we propose applying a tropical SVM to the data sets of phylogenetic trees.  
  

  Similar to a classical SVM over a Euclidean space, a   {\em hard margin} tropical   SVM introduced by \cite{Gartner} is a tropical hyperplane which  maximizes the {\em margin}, the minimum tropical distance from data points to the tropical hyperplane (which is $z$ in Figure \ref{fig:tropHardMargin}), to separate these data points into {\em open sectors} over a tropical projective torus.  
   Similar to the classical hard margin SVMs, hard margin tropical SVMs assume that there is a tropical hyperplane which separates all points from different categories into each open sector (see the left figure in Figure \ref{fig:tropHardMargin}).  
  \begin{figure}
      \centering
      \includegraphics[width=0.4\textwidth]{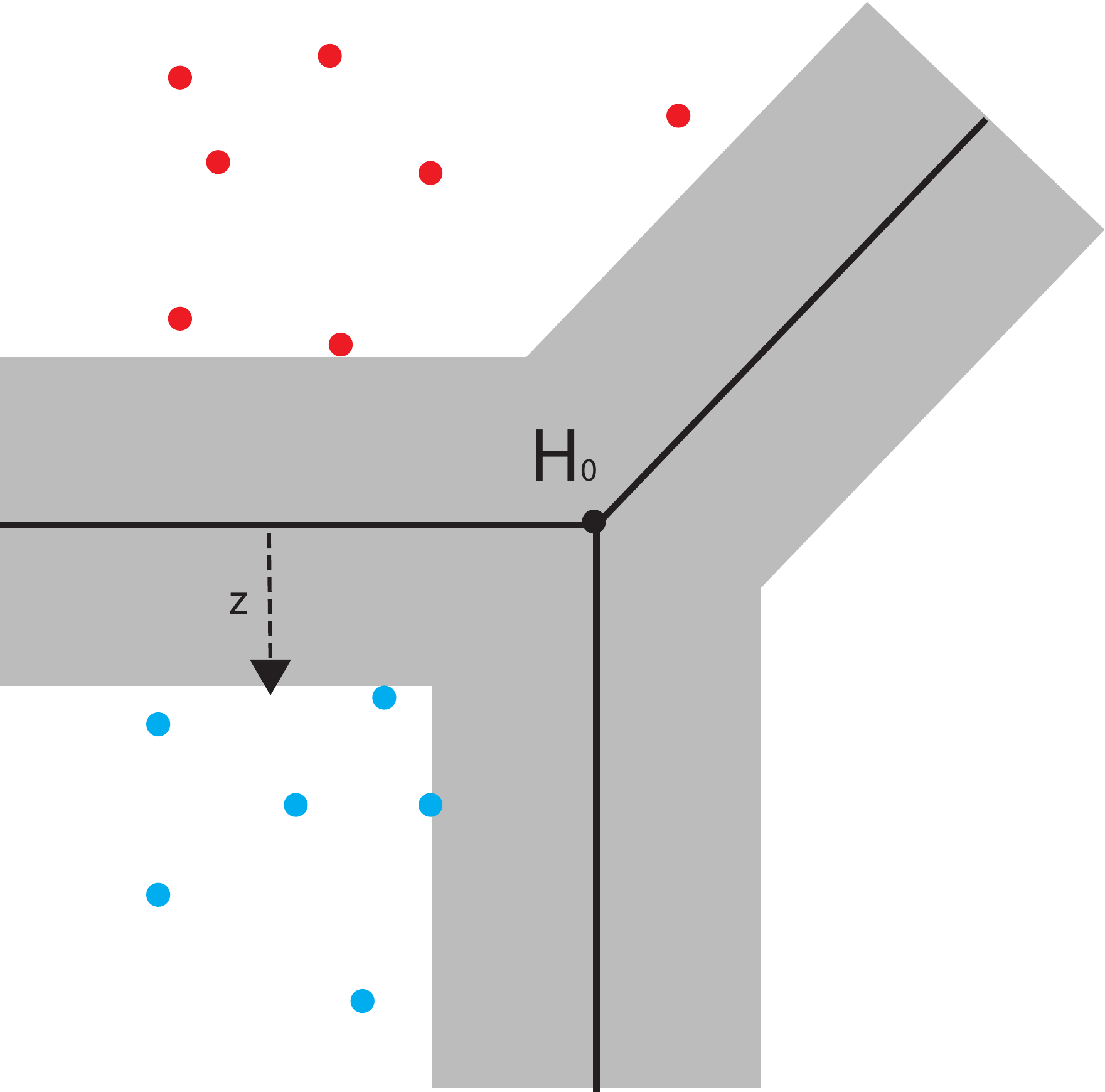}
      \includegraphics[width=0.4\textwidth]{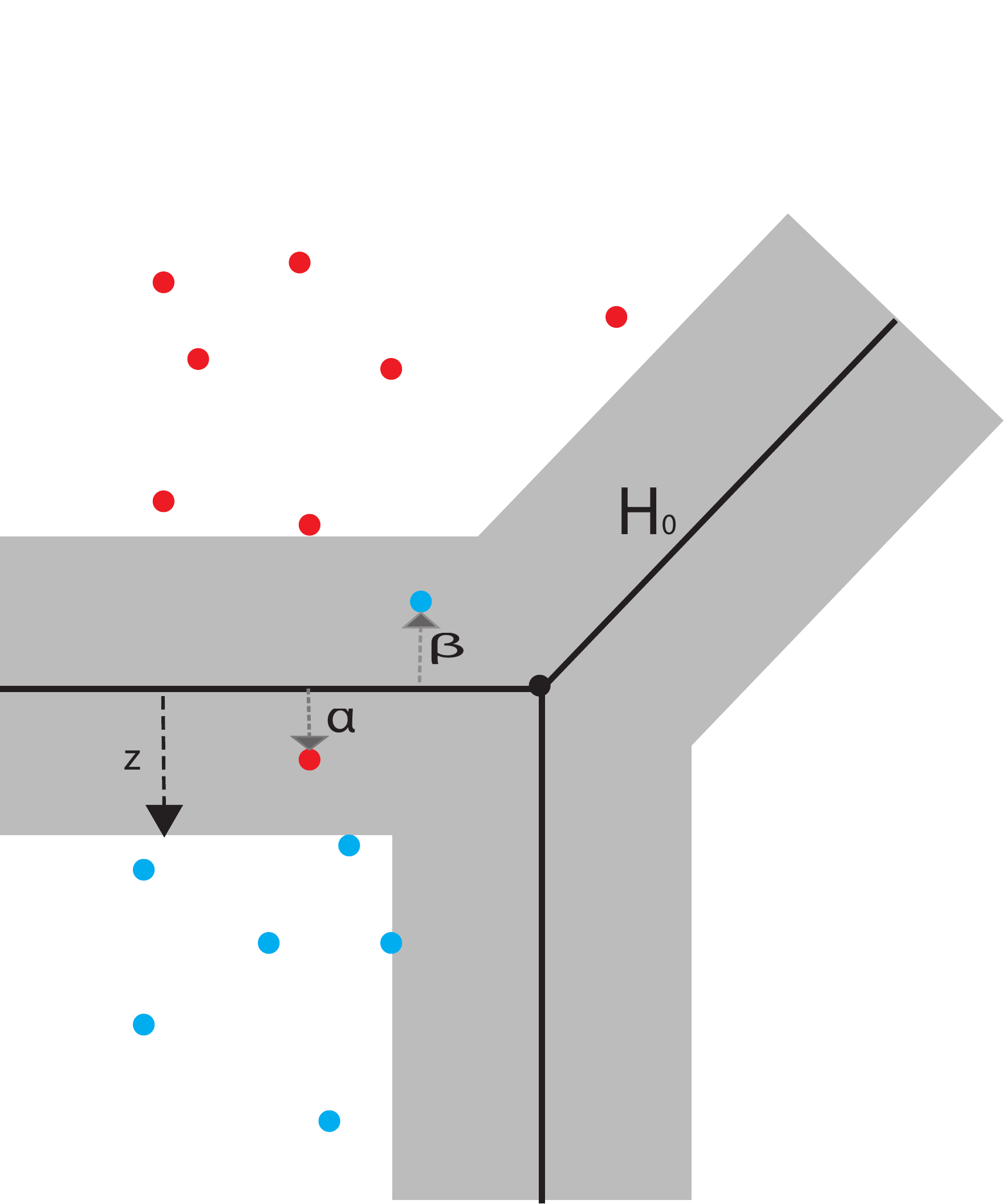}
      \caption{Picture of a hard margin tropical SVM (LEFT) and a soft margin tropical SVM (RIGHT) with two categories. A hard margin tropical SVM assumes that data points are separated by a tropical hyperplane and a hard margin tropical SVM maximizes the margin $z$ which is the width of the grey area from the tropical hyperplane in this figure. A soft margin tropical SVM  maximizes a margin and minimizes the sum of $\alpha$ and $\beta$ at the same time.}
      \label{fig:tropHardMargin}
  \end{figure}
 By the ideas proposed in  \cite{Gartner},  we formulate hard margin tropical SVMs as linear programming problems (see \eqref{equation:251}--\eqref{equation:254}). Since a hard margin tropical SVM assumes all data points from different categories are clearly separated by a tropical hyperplane, we have to check which points are in which sector. To do so,  
  we have to go through possibly exponentially many linear programming problems in terms of the dimension of the data and the sample size of the input data set.  
  In practice, many of these linear programming problems might be infeasible. 
Here, we study a special case when the data points from the same category stay in the same sector of a  separating tropical hyperplane.   
For this simpler case, we show the necessary and sufficient conditions for each linear programming problem to be feasible and  an explicit formula for the optimal value of a feasible linear programming problem (see Theorems \ref{thm:16}-\ref{theorem76}). 
  
  As a classical SVM, the assumption of a hard margin tropical SVM, such that all data points from different categories are clearly separated, is not realistic in general.  It  rarely happens that all data points from different categories are clearly separated by a tropical hyperplane.  In a Euclidean space, we use soft margin SVMs if some data points from different categories are overlapped.  In this paper, we introduce soft margin tropical SVMs and show the soft margin tropical SVMs can be formulated as linear programming problems by adding slacker variables into the hard margin topical SVMs (see \eqref{equation:421}--\eqref{equation:424}).   We show these linear programming problems are feasible (see Proposition \ref{pp:subfeasible}), and for proper scalar constants, the objective functions of these linear programming problems have bounded optimal values (see Theorem \ref{thm:boundedsoft}). 
  
  Based on our theorems, we develop algorithms  to compute a tropical SVM implemented in {\tt R} (see Algorithms \ref{alg:16}--\ref{alg:14}).   Finally we apply our methods to simulated data generated by the multispecies coalescent processes \cite{coalescent}.  Computational results show our methods are efficient in practice and the accuracy rates of soft margin tropical SVMs are much higher than those of the classical SVMs implemented in the {\tt R} package {\tt e1071} \cite{classicalSVM} for the simulated data with a small ratio of the species depth to the population size (see Figure \ref{fig:accuracy}). 

This paper is organized as follows: In Section \ref{trop:basics}, we remind readers basics in tropical geometry with max-plus algebra.  In Section \ref{subsec:moduli}, we discuss the space of phylogenetic trees with a fixed set of labels on leaves as a tropical Grassmannian with max-plus algebra.  In Section \ref{sec:hardmargin}, we formulate a hard margin tropical SVM as an optimal solution of a linear programming problem. When the data points from the same category stay in the same open sector of a separating tropical hyperplane, we discuss the necessary and sufficient conditions for data points from different categories to be separated via a tropical hard SVM.  In addition, we show the explicit formulae for the optimal values of  feasible linear programing problems.  
In Section \ref{sec:softmargin}, we formulate a soft margin tropical SVM  as  linear programming problems.  Then we prove properties of soft margin tropical SVMs.
In Section \ref{sec:algorithm}, we develop algorithms based on theorems in Section \ref{sec:softmargin}, and we apply them to simulated data generated under the multispecies coalescent processes in Section \ref{experiment}.  Finally in Section \ref{sec:discussion}, we conclude our results and propose open problems. 
The proofs of Theorems \ref{thm:16}--\ref{theorem76} are presented in Appendix \ref{appA}.
 Our software implemented in {\tt R} and simulated data can be downloaded at \url{https://github.com/HoujieWang/Tropical-SVM}, see Appendix \ref{appB}.

\section{Tropical Basics}\label{trop:basics}
Here we review some basics of tropical arithmetic and geometry, as well as setting up the notation through this paper.  For more details, see \cite{maclagan2015introduction} or \cite{J}.

\begin{definition}[Tropical Arithmetic Operations]
  Throughout this paper we will perform arithmetic in the
  max-plus tropical semiring $(\,\mathbb{R} \cup \{-\infty\},\boxplus,\odot)\,$.
  In this tropical semiring,  the basic tropical
  arithmetic operations of addition and multiplication are defined as:
    $$a \boxplus b := \max\{a, b\}, ~~~~ a \odot b := a + b ~~~~\mbox{  where } a, b \in \mathbb{R}\cup\{-\infty\}.$$
    Note that $-\infty$ is the identity element under addition and 0 is the identity element under multiplication.
  \end{definition}
  
  \begin{definition}[Tropical Scalar Multiplication and Vector Addition]
  For any scalars $a,b \in \mathbb{R}\cup \{-\infty\}$ and for any vectors $v = (v_1,
                                                                                 \ldots ,v_e), w= (w_1, \ldots , w_e) \in (\mathbb{R}\cup-\{\infty\})^e$, we 
  define tropical scalar multiplication and tropical vector addition as follows:
    $$a \odot v := (a + v_1,  \ldots ,a + v_e),$$
    $$a \odot v \boxplus b \odot w := (\max\{a+v_1,b+w_1\}, \ldots, \max\{a+v_e,b+w_e\}).$$
    \end{definition}
  
  Throughout this paper we consider the \emph{tropical projective torus}, that is, the projective space
   $\mathbb R^e \!/\mathbb R {\bf 1}$, where ${\bf 1}:=(1, 1, \ldots , 1)$, the all-one vector.  In $\mathbb R^e \!/\mathbb R {\bf 1}$, any
   point $(v_1, \ldots, v_e)$ is equivalent to $(v_1+a, \ldots, v_e+a)$ for any scalar $a\in \mathbb R$. 
  
  \begin{example}
Consider $\mathbb R^e \!/\mathbb R {\bf 1}$.  Then let
\[
v = (1, 2, 3), \, w = (1, 1, 1).
\]
Also let $a = -1, \, b = 3$.  Then we have
\[
a \odot v \boxplus b \odot w = (\max(-1 + 1, 3 + 1), \max(-1 + 2, 3 + 1), \max(-1 + 3, 3 + 1)) = (4, 4, 4) = (0, 0, 0).
\]
  \end{example}
  
  \begin{definition}[Generalized Hilbert Projective Metric]
  For any two points $v, \, w \in \mathbb R^e \!/\mathbb R {\bf
    1}$,  the {\em tropical
      distance} $d_{\rm tr}(v,w)$ between $v$ and $w$ is defined as:
    \begin{equation*}
  \label{eq:tropmetric} d_{\rm tr}(v,w) \,\, = \,\,
  \max_{i,j} \bigl\{\, |v_i - w_i  - v_j + w_j| \,\,:\,\, 1 \leq i < j \leq e \,\bigr\} = \max_{i} \bigl\{ v_i - w_i \bigr\} - \min_{i} \bigl\{ v_i - w_i \bigr\},
  \end{equation*}
  where $v = (v_1, \ldots , v_e)$ and $w= (w_1, \ldots , w_e)$. This
  distance measure is a metric in $\mathbb R^e \!/\mathbb R {\bf
    1}$.
  \end{definition}
  
  \begin{example}
  Suppose $u_1, \, u_2 \in \mathbb R^3 \!/\mathbb R {\bf 1}$ such that
  \[
  u_1 = (0, 0, 0), \, u_2 = (0, 3, 1).
  \]
  Then the tropical distance between $u_1, \, u_2$ is
  \[
  d_{\rm tr}(u_1, u_2) = \max(0, -3, -1) - \min(0, -3, -1) = 0 - (-3) = 3. 
  \]
  \end{example}
  
  \begin{definition}[Tropical Convex Hull]
  The {\em tropical convex hull} or {\em tropical polytope} of a given finite subset $V = \{v_1, \ldots , v_s\}\subset \mathbb R^e \!/\mathbb R {\bf
    1}$ is the smallest tropically-convex subset containing $V \subset \mathbb R^e \!/\mathbb R {\bf
    1}$: it is written as the set of all tropical linear combinations of $V$ such that:
    $$
    \mathrm{tconv}(V) = \{a_1 \odot v_1 \boxplus a_2 \odot v_2 \boxplus \cdots \boxplus a_s \odot v_s \mid v_1,\ldots,v_s \in V \mbox{ and } a_1,\ldots,a_s \in \R \}.
  $$
    A {\em tropical line segment} between two points $v_1, \, v_2$ is the tropical convex hull of $\{v_1, \, v_2\}$.  
  \end{definition}
  
  \begin{example}
  Suppose we have a set $ V = \left\{v_1, \, v_2, \, v_3\right\} \subset \mathbb R^3 \!/\mathbb R {\bf 1}$ where
  \[
  v_1 = (0, 0, 0), \, v_2 = (0, 3, 1), \, v_3 = (0, 2, 5).
  \]
  Then we have the tropical convex hull $\mathrm{tconv}(V)$ of $V$ is shown in Figure \ref{fig:tropPoly}.
  \begin{figure}
      \centering
      \includegraphics[width=0.5\textwidth]{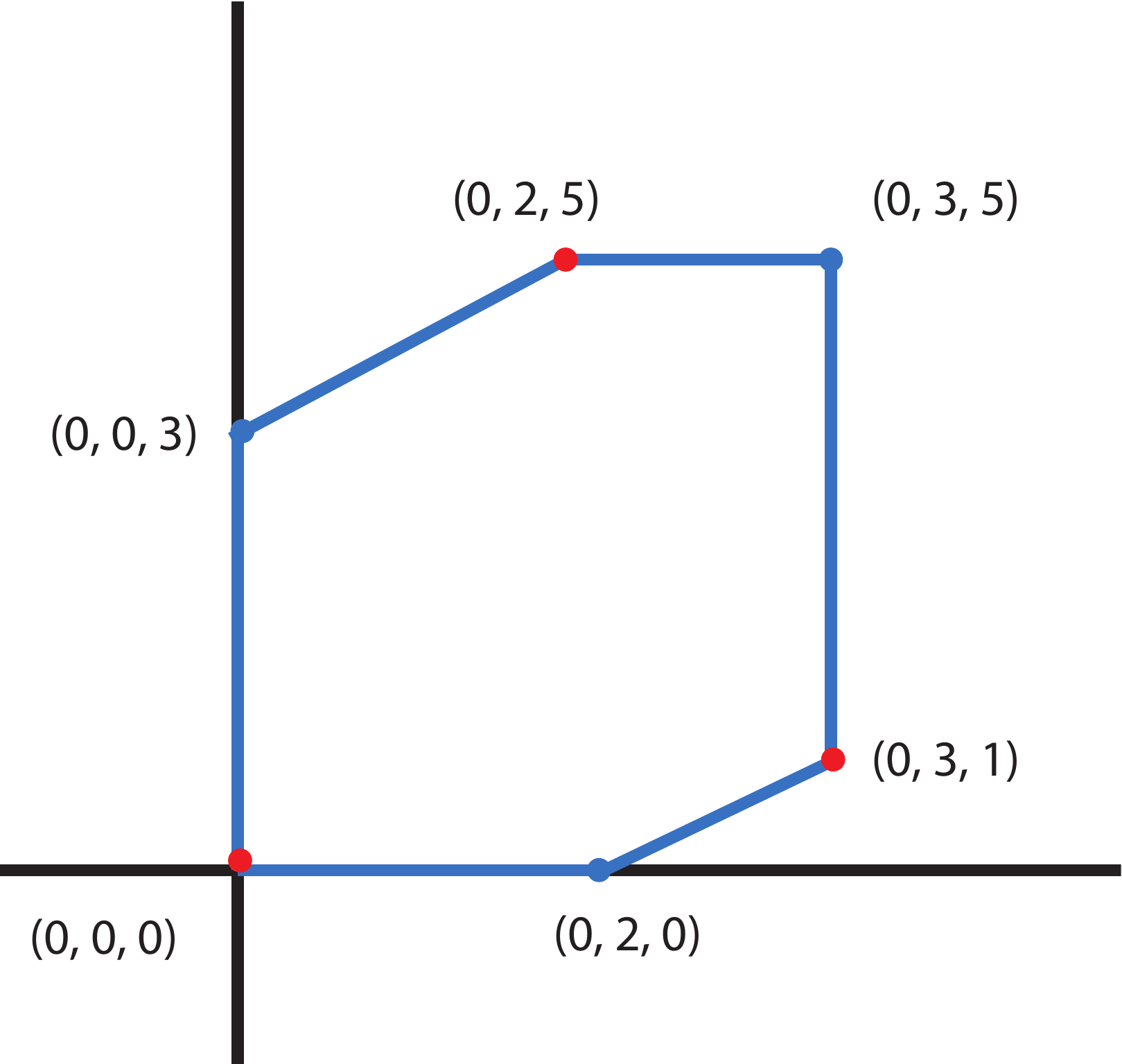}
      \caption{Tropical polytope of three points $(0, 0, 0), \, (0, 3, 1), \, (0, 2, 5)$ in $\mathbb R^3 \!/\mathbb R {\bf 1}$.}
      \label{fig:tropPoly}
  \end{figure}
  \end{example}

\section{Phylogenetic Trees}
  \label{subsec:moduli}
  A phylogenetic tree is a tree representation of evolutionary relationship between taxa.  More formally a phylogenetic tree is a weighted tree with unlabeled internal nodes and labeled leaves.  Weights on edges in a phylogenetic tree represent evolutionary time multiplied by an evolutionary rate. For more details on evolutionary models on phylogenetic trees, see \cite{Steel2003}.  A phylogenetic tree can be rooted or unrooted.  In this paper we focus on rooted phylogenetic trees.  Let $N \in \mathbb{N}$ be the number of leaves and $[N] := \{1, \ldots , N\}$ be the set of labels for leaves. 
  \begin{remark}
  There exist 
  \[
    (2N - 3)!! = (2N - 3)\cdot (2N - 5) \cdots 3 \cdot 1 
    \]
  many binary rooted phylogenetic tree topologies.  
  \end{remark}
    \begin{example}
  Suppose $N = 4$.  Then there are $15$ many different tree topologies for rooted phylogenetic trees.  
  \end{example}
  
  If a total of weights of all edges in a path from the root to each leaf $i \in [N]$ in a rooted phylogenetic tree $T$ is the same for all leaves $i \in [N]$, then we call a phylogenetic tree $T$ {\em equidistant tree}.  Through this paper we focus on equidistant trees with leaves with labels $[N]$. The {\em height} of an equidistant tree is the total weight of all edges in a path from the root to each leaf in the tree.  Through the manuscript we assume that all equidistant trees have the same height.  In phylogenetics this assumption is fairly mild since the multispecies coalescent model assumes that all gene trees have the same height.
  
  \begin{example}
  Suppose $N = 4$.   Rooted phylogenetic trees shown in Figure \ref{fig:equidistant} are equidistant trees with their height equal to $1$.
    \begin{figure}
      \centering
      \includegraphics[width=0.5\textwidth]{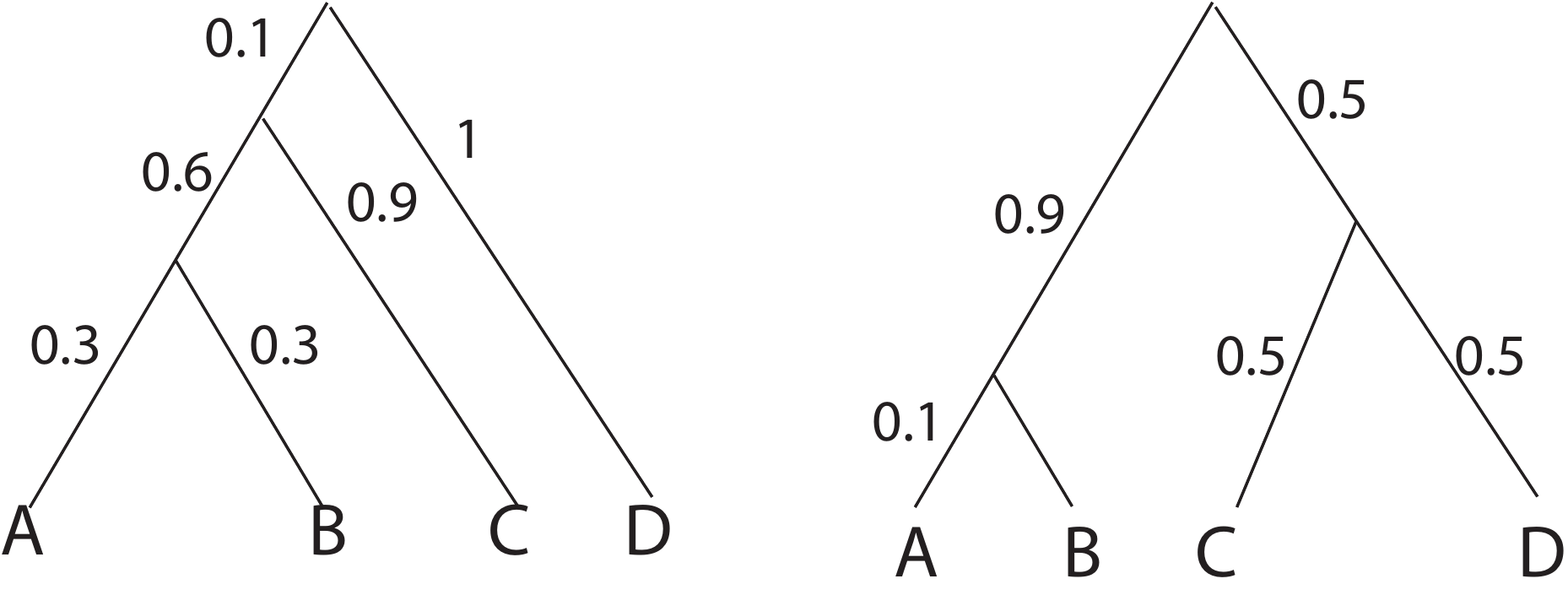}
      \caption{Examples of equidistant trees with $4$ leaves and with their height equal to $1$.}
      \label{fig:equidistant}
  \end{figure}
  \end{example}
  
  \begin{definition}[Dissimilarity Map]\label{def:dissimilarity}
  A \emph{dissimilarity map} $w$ is a function $w:[N]\times [N]\to \mathbb{R}_{\geq 0}$ such that 
  $$
    w(i,i)=0 \mbox{  and  } w(i,j)=w(j,i)\geq 0
  $$ 
    for every $i, \, j\in [N]$. If a dissimilarity map $w$ additionally satisfies the triangle inequality, $w(i,j)\leq w(i,k)+w(k,j)$ for all $i, \, j, \, k\in [N]$, then $w$ is called a \emph{metric}.
  If there exists a phylogenetic tree $T$ such that $w(i, j)$ corresponds a total branch length of the edges in the unique path from a leaf $i$ to a leaf $j$ for all leaves $i, j \in [N]$, then we call $w$ a {\em tree metric}.  If a metric $w$ is a tree metric and $w(i, j)$ corresponds the total branch length of all edges in the path from a leaf $i$ to a leaf $j$ for all leaves $i, j \in [N]$ in a phylogenetic tree $T$, then we say $w$ realises a phylogenetic tree $T$.
  \end{definition}
  In this paper, we interchangeably write $w_{ij} = w(i,j)$.  Since $w$ is symmetric, i.e., $w(i, j) = w(j, i)$ and since $w(i, j) = 0$ if $i = j$, we write 
  \[
    w = \left(w(1, 2), w(1, 3), \ldots , w(N-1, N)\right).
    \]
  
  \begin{definition}[Three Point Condition]
  If a metric $w$ satisfies the following condition: For every distinct leaves $i, j, k \in [N]$,
  \[
    \max\{w(i, j), \, w(i, k), \, w(j, k)\}
    \]
  achieves twice, then we say a metric $w$ satisfies the three point condition.
  \end{definition}
  
  \begin{definition}[Ultrametrics]\label{def:ultra}
  If a metric $w$ satisfies the three point condition then we call $w$ an {\em ultrametric}.
  \end{definition}
  
  \begin{theorem}[Proposition 12 in \cite{monod2019}]       
  \label{equid:ultra}
  A dissimilarity map $w: [N] \times [N]$ is an ultrametric if and only if $w$ is realisable of an equidistant tree with labels $[N]$.  Also there is one-to-one relation between an ultrametric $w: [N] \times [N]$ and an equidistant tree with labels $[N]$. 
  \end{theorem}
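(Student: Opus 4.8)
The plan is to prove the two directions of the biconditional separately and then deduce bijectivity. For the ``if'' direction (a realised equidistant tree gives an ultrametric), I would start from a tree $T$ realised by $w$ and analyse each leaf-to-leaf path through its most recent common ancestor (MRCA). For leaves $i,j$, let $a_{ij}$ be their MRCA; since $T$ is equidistant of height $h$, every leaf sitting below a fixed internal node is the same distance from that node, so $w(i,j) = 2\bigl(h - \mathrm{depth}(a_{ij})\bigr)$, i.e.\ twice the height of $a_{ij}$ above the leaf level. For three distinct leaves $i,j,k$, two of the three pairwise MRCAs must coincide with the MRCA of all three (the topmost one), which forces the two corresponding distances to be equal and maximal among the three. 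This is exactly the three point condition, and since any tree metric automatically satisfies the triangle inequality, $w$ is an ultrametric.

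For the ``only if'' direction (an ultrametric gives a realising equidistant tree), the key preliminary step is to rephrase the three point condition as the strong triangle inequality $w(i,k) \le \max\{w(i,j), w(j,k)\}$ for all $i,j,k$. I would then use this to show that, for each threshold $t \ge 0$, the relation $i \sim_t j \iff w(i,j) \le t$ is an equivalence relation, where transitivity is precisely the place the strong inequality is needed. As $t$ increases from $0$ to $\max_{i,j} w(i,j)$, the associated partitions coarsen monotonically, yielding a nested hierarchy of clusters. I would build a rooted tree from this hierarchy, placing the internal node created at threshold $t$ at distance $t/2$ above the leaf level and taking the height to be $h = \tfrac{1}{2}\max_{i,j} w(i,j)$. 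A direct computation then shows that the path length between leaves $i$ and $j$ in this tree equals the smallest $t$ for which $i \sim_t j$, namely $w(i,j)$, while all leaves sit at height $h$; hence the tree is equidistant and realises $w$.

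For the one-to-one correspondence I would observe that the two constructions are mutually inverse: the hierarchy of clusters, and therefore the tree topology together with its merge heights (equivalently its edge weights), is uniquely determined by $w$, while the path metric of a given equidistant tree is manifestly a single well-defined ultrametric. Thus the two maps invert each other and the correspondence is a bijection.

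The main obstacle is the ``only if'' direction, and specifically verifying that the reconstructed tree recovers $w$ \emph{exactly} and is genuinely equidistant: this requires tracking the merge heights carefully and confirming that the unique path between any two leaves passes through precisely the node at which their clusters first merge. The enabling lemma that makes the whole clustering construction well-defined is the equivalence of the three point condition with the strong triangle inequality, so I would isolate and prove that equivalence first.
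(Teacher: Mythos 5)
Your argument is correct and is the standard proof of this classical fact: the forward direction via the MRCA--height computation, and the converse via the equivalence of the three point condition with the strong triangle inequality $w(i,k)\le\max\{w(i,j),w(j,k)\}$, the resulting nested family of threshold partitions, and the induced dendrogram. The paper itself supplies no proof---it imports the statement as Proposition 12 of \cite{monod2019}---and your route coincides with the one used there; the only detail to pin down is the height convention (take $h=\tfrac{1}{2}\max_{i,j}w(i,j)$, i.e.\ root the tree at the topmost merge node), since a degree-two root placed any higher would give a second equidistant tree realising the same $w$ and spoil the one-to-one claim.
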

  \begin{example}\label{ex:diss}
  For equidistant trees in Figure \ref{fig:equidistant}, the dissimilarity map for the left tree in Figure \ref{fig:equidistant} is 
  \[
  (0.6, 1.8, 2, 1.8, 2, 2),
  \]
  and  the dissimilarity map for the right tree in Figure \ref{fig:equidistant} is 
  \[
  (0.2, 2, 2, 2, 2, 1).
  \]
 Note that these dissimilarity maps are tree metrics since these dissimilarity maps are computed from trees and they are also ultrametrics since they satisfy the three point condition.  
  \end{example}

  From Theorem \ref{equid:ultra} we consider the space of ultrametrics with labels $[N]$ as a space of all equidistant trees with  labels $[N]$.  Let~$\mathcal{U}_N$ be the space of ultrametrics for the equidistant trees with leaf labels $[N]$.  
  In fact we can write $\mathcal{U}_N$ as the tropicalization of the linear space generated by linear equations.

  Let $d := {N \choose 2}$, and let $L_N \subseteq \mathbb{R}^d$ be the linear subspace 
  defined by the linear equations such that
  \begin{equation}
  \label{eq:trop_eq}
  x_{ij} - x_{ik} + x_{jk}=0
  \end{equation} 
  for $1\leq i < j < k \leq N$.  
  For the linear equations (\ref{eq:trop_eq}) spanning $L_N$, their max-plus {\em tropicalization} $Trop(L_N)$ is the set of points $w$ such that $\max\left\{w_{ij}, \, w_{ik}, \, w_{jk}\right\}$ achieves at least twice for all $i, j, k \in [N]$ (see e.g., \cite{bo2017}).  This is the three point condition defined in Definition \ref{def:ultra}.

  \begin{theorem}[Theorem 3 in \cite{YZZ}]
  \label{tropicalLine}
  The image of  $~\mathcal{U}_N$ in the tropical projective torus $\R^{d}/\R\one$ coincides with $\Trop(L_N)$, where $d = {N \choose 2}$. 
  \end{theorem}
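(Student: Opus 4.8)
The plan is to regard both $\pi(\mathcal{U}_N)$ and $\Trop(L_N)$ as subsets of $\R^d/\R\one$, where $\pi : \R^d \to \R^d/\R\one$ is the quotient map, and to prove the two inclusions separately. Everything rests on the characterization already recorded just above the statement: $\Trop(L_N)$ is exactly the set of $w$ for which $\max\{w_{ij}, w_{ik}, w_{jk}\}$ is attained at least twice for every triple $i,j,k$, which is the three point condition of Definition \ref{def:ultra}. Since adding the same constant $c$ to all coordinates does not alter which of $w_{ij}, w_{ik}, w_{jk}$ achieve their maximum, this condition is invariant under the shift $w \mapsto w + c\one$ and hence is well defined on the torus. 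Thus the theorem reduces to showing that, modulo $\R\one$, a point satisfies the three point condition if and only if it is an ultrametric.

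First I would dispose of the inclusion $\pi(\mathcal{U}_N) \subseteq \Trop(L_N)$. By Definition \ref{def:ultra} every ultrametric $w$ is in particular a point satisfying the three point condition, so $w \in \Trop(L_N)$ and therefore $\pi(w) \in \Trop(L_N)$; this direction is immediate from the definitions.

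The substance lies in the reverse inclusion $\Trop(L_N) \subseteq \pi(\mathcal{U}_N)$, which I would prove by shifting an arbitrary solution of the tropical equations into the cone of genuine ultrametrics. Given a class in $\Trop(L_N)$, choose any representative $w$; it satisfies the three point condition. Pick $c \in \R$ large enough that $w' := w + c\one$ has all coordinates nonnegative. By our indexing conventions $w'$ is symmetric and vanishes on the diagonal, so it is a dissimilarity map; by shift-invariance it still satisfies the three point condition, equivalently the ultrametric inequality $w'(i,j) \leq \max\{w'(i,k), w'(k,j)\}$ for all $i,j,k$; and since $\max\{a,b\} \leq a+b$ whenever $a,b \geq 0$, the ultrametric inequality forces the ordinary triangle inequality. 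Hence $w'$ is a metric obeying the three point condition, i.e.\ $w' \in \mathcal{U}_N$, and $\pi(w) = \pi(w') \in \pi(\mathcal{U}_N)$.

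I expect the main obstacle to be conceptual rather than computational: one must recognize that passing to the torus is precisely what reconciles the purely combinatorial three point condition with the metric axioms, because a raw solution of the equations \eqref{eq:trop_eq} need be neither nonnegative nor subject to the triangle inequality until it is translated along $\one$. Once the shift-invariance and the implication from the ultrametric inequality to the triangle inequality are in hand, no further difficulty remains; the bijection with equidistant trees in Theorem \ref{equid:ultra} is not needed for the set equality itself.
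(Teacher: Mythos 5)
The paper does not prove this statement---it is imported verbatim as Theorem~3 of \cite{YZZ}, with the only supporting remark being the characterization of $\Trop(L_N)$ as the locus where $\max\{w_{ij},w_{ik},w_{jk}\}$ is attained at least twice (itself credited to \cite{bo2017}). Your argument is correct and is essentially the standard proof of that cited result: the forward inclusion is definitional, and the reverse inclusion follows from the shift-invariance of the three point condition together with the observation that, once a representative is translated along $\one$ to have nonnegative entries, the ultrametric inequality $w'(i,j)\leq\max\{w'(i,k),w'(k,j)\}$ subsumes the triangle inequality. The only thing you take on faith is the identification of $\Trop(L_N)$ with the three-point-condition locus (i.e.\ that the equations \eqref{eq:trop_eq} form a tropical basis), but the paper grants exactly this in the paragraph preceding the theorem, so your proof is complete at the level of rigor the paper itself adopts.
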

  
\begin{example}
  For $N = 3$, then there are three tree topologies for equidistant trees.  The space of ultrametrics $\mathcal{U}_3$ corresponding to the equidistant trees with $3$ leaves is a union of polyhedral cones $C_1, \, C_2, \, C_3$ in $\mathbb{R}^3$ defined by:
  \[
  \begin{array}{cc}
 C_1:& \left\{(x_{11}, x_{12}, x_{23})| x_{11} = x_{12}, x_{11} \geq x_{23}, x_{12} \geq x_{13}, x_{11} \geq 0, x_{12} \geq 0, x_{23} \geq 0, )\right\},\\
  C_2:& \left\{(x_{11}, x_{12}, x_{23})| x_{11} = x_{23}, x_{11} \geq x_{12}, x_{23} \geq x_{12}, x_{11} \geq 0, x_{12} \geq 0, x_{23} \geq 0, )\right\},\\
C_3:& \left\{(x_{11}, x_{12}, x_{23})| x_{12} = x_{23}, x_{12} \geq x_{11}, x_{23} \geq x_{11}, x_{11} \geq 0, x_{12} \geq 0, x_{23} \geq 0, )\right\}.\\
  \end{array}
  \]
  $C_1, \, C_2, \, C_3$ are the $1$-dimensional cones in $\mathbb{R}^3$.
  \end{example}

\section{Tropical SVMs}\label{sec:th}
Since the image of an equidistant tree with  labels $[N]$ under the dissimilarity map is a point in the 
tropical projective torus $\mathbb R^d \!/\mathbb R {\bf 1}$ (see e.g.,  Example \ref{ex:diss}), in this section, we consider $\mathbb R^d \!/\mathbb R {\bf 1}$ and introduce topical SVMs in $\mathbb R^d \!/\mathbb R {\bf 1}$. 


\begin{definition}[Tropical Hyperplane]
 For any $\omega:=(\omega_1, \ldots, \omega_d)\in \mathbb R^d$, the {\em tropical hyperplane defined by $\omega$}, denoted by $H_{\omega}$, is the set of points $x\in \mathbb R^d \!/\mathbb R {\bf
    1}$ such that 
  $$\max \{\omega_1+x_1, \ldots \omega_d+x_d\}$$
    is attained at least twice. We call $\omega$ the {\em normal vector} of $H_{\omega}$.
    \end{definition}
    
\begin{definition}[Sectors of Tropical Hyperplane]
    Each tropical hyperplane $H_{\omega}$ divides the tropical projective torus $\mathbb R^d \!/\mathbb R {\bf
      1}$ into $n$ connected components, which are {\em open sectors} 
  $$S_{\omega}^i~:=~\{\;x\in \mathbb R^d \!/\mathbb R {\bf
    1}\;|\; \omega_i+x_i>\omega_j+x_j,\;\forall j\neq i\;\},\;\;i=1,\ldots,d.$$
    Accordingly, we define {\em closed sectors} as 
    $$\overline{S}_{\omega}^i~:=~\{\;x\in \mathbb R^d \!/\mathbb R {\bf
    1}\;|\; \omega_i+x_i\geq \omega_j+x_j,\;\forall j\neq i\;\},\;\;i=1,\ldots,d.$$
    \end{definition}
    
    \begin{definition}[Tropical Distance to a Tropical Hyperplane]
  The {\em tropical distance} from a point $x\in \mathbb R^d \!/\mathbb R {\bf
    1}$ to a tropical hyperplane $H_{\omega}$ is defined as
  $$d_{\rm tr}(x, H_\omega)\;:=\;\min\{d_{\rm tr}(x, y)\;|\;y\in H_{\omega}\}.$$
  \end{definition}

    \begin{proposition}[Lemma 2.1 in \cite{Gartner}]\label{pp:phdis}
    Let $H_{\bf 0}$ denote the tropical hyperplane defined by the zero vector ${\bf 0}\in {\mathbb R}^d$.
  For any $x\in \mathbb R^d \!/\mathbb R {\bf
    1}$, 
  the tropical distance $d_{\rm tr}(x, H_{\bf 0})$ is the difference between the largest and the second largest coordinate of $x$. 
    \end{proposition}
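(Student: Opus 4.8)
The plan is to establish the stated value by proving two matching inequalities: a lower bound $d_{\rm tr}(x, H_{\bf 0}) \geq x_{(1)} - x_{(2)}$ that holds against \emph{every} point of the hyperplane, and an upper bound exhibited by an explicit nearest point. Here I write $x_{(1)} \geq x_{(2)} \geq \cdots$ for the coordinates of $x$ arranged in nonincreasing order, and I fix an index $p$ with $x_p = x_{(1)}$. If $x_{(1)} = x_{(2)}$, then $\max_i x_i$ is already attained at least twice, so $x \in H_{\bf 0}$, both sides vanish, and there is nothing to prove. Thus I would assume $x_{(1)} > x_{(2)}$, which forces the maximizing index $p$ to be unique.

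For the lower bound, I would take an arbitrary $y \in H_{\bf 0}$ and work directly from the definition $d_{\rm tr}(x,y) = \max_i (x_i - y_i) - \min_i(x_i - y_i)$. Since $y$ lies on $H_{\bf 0}$, the maximum $\max_i y_i$ is attained at (at least) two distinct indices; because $p$ is a single index, I may choose such a maximizing index $q \neq p$. Bounding $\max_i(x_i - y_i) \geq x_p - y_p$ and $\min_i(x_i - y_i) \leq x_q - y_q$ then gives
$$d_{\rm tr}(x,y) \geq (x_p - y_p) - (x_q - y_q) = (x_p - x_q) + (y_q - y_p).$$
The crux is that $y_q - y_p \geq 0$, since $y_q$ is the maximum coordinate of $y$, while $x_p - x_q \geq x_{(1)} - x_{(2)}$, since $q \neq p$ forces $x_q \leq x_{(2)}$. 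As $y$ was arbitrary, taking the infimum over $H_{\bf 0}$ yields $d_{\rm tr}(x, H_{\bf 0}) \geq x_{(1)} - x_{(2)}$.

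For the matching upper bound, I would produce a concrete witness by \emph{capping} the top coordinate of $x$: define $y$ by $y_i := \min\{x_i, x_{(2)}\}$. Then $\max_i y_i = x_{(2)}$, attained both at $p$ (where $x_p$ has been lowered to $x_{(2)}$) and at an index realizing the second-largest coordinate of $x$, so the maximum occurs at least twice and $y \in H_{\bf 0}$. Computing the displacement, $x_i - y_i = 0$ for every $i \neq p$ and $x_p - y_p = x_{(1)} - x_{(2)} > 0$, whence $\max_i(x_i - y_i) = x_{(1)} - x_{(2)}$ and $\min_i(x_i - y_i) = 0$, so $d_{\rm tr}(x,y) = x_{(1)} - x_{(2)}$. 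Combined with the lower bound, this forces the equality claimed in the proposition.

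I do not anticipate any serious obstacle: the argument is elementary once the tropical distance is written in the form $\max_i(x_i - y_i) - \min_i(x_i - y_i)$. The only point demanding care is the bookkeeping around ties—guaranteeing that the maximizing index $q$ of $y$ can be chosen distinct from the maximizer of $x$—and this is exactly what the reduction to the case $x_{(1)} > x_{(2)}$ (so that $p$ is unique) settles cleanly.
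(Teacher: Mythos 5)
Your proof is correct. The paper itself states this result without proof, importing it verbatim as Lemma 2.1 of the cited G\"artner--Jaggi report, so there is no internal argument to compare against; your two-sided argument --- the lower bound against an arbitrary $y\in H_{\bf 0}$ obtained by picking a maximizing index $q\neq p$ of $y$ and estimating $d_{\rm tr}(x,y)\geq (x_p-x_q)+(y_q-y_p)\geq x_{(1)}-x_{(2)}$, together with the explicit witness $y_i=\min\{x_i,x_{(2)}\}$ attaining that value --- is the standard proof, and your reduction to the case $x_{(1)}>x_{(2)}$ correctly disposes of the only tie issue.
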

  
  \begin{corollary}[Corollary  2.3 in  \cite{Gartner}]\label{cry:phdis}
  For any $\omega\in {\mathbb R}^d$, and for any $x\in \mathbb R^d \!/\mathbb R {\bf
    1}$, 
  the tropical distance $d_{\rm tr}(x, H_\omega)$ is 
  equal to $d_{\rm tr}(\omega+x, H_{\bf 0})$. 
  \end{corollary}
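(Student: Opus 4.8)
The plan is to reduce the statement to two elementary observations: that the tropical metric $d_{\rm tr}$ is invariant under translation by a fixed vector, and that translation by $\omega$ carries $H_\omega$ bijectively onto $H_{\bf 0}$. Once both are in hand, the identity follows by a change of variables inside the minimization that defines the distance to a hyperplane.

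First I would record the translation invariance of $d_{\rm tr}$. Using the second (equivalent) formula in the definition of the tropical distance,
\[
d_{\rm tr}(\omega+x,\;\omega+y)\;=\;\max_i\bigl\{(\omega_i+x_i)-(\omega_i+y_i)\bigr\}-\min_i\bigl\{(\omega_i+x_i)-(\omega_i+y_i)\bigr\}\;=\;d_{\rm tr}(x,y),
\]
since the coordinatewise differences $(\omega_i+x_i)-(\omega_i+y_i)=x_i-y_i$ are unchanged by adding $\omega$ to both arguments. Because $\omega$ is a fixed vector in $\mathbb R^d$, the map $T_\omega\colon z\mapsto \omega+z$ commutes with the quotient by $\mathbb R{\bf 1}$ and hence descends to a well-defined bijection of $\mathbb R^d\!/\mathbb R{\bf 1}$ onto itself; the display above shows this bijection is in fact an isometry.

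Next I would establish that $T_\omega$ maps $H_\omega$ onto $H_{\bf 0}$. By definition, $y\in H_\omega$ precisely when $\max\{\omega_1+y_1,\ldots,\omega_d+y_d\}$ is attained at least twice; writing $z=\omega+y$, this says exactly that $\max\{z_1,\ldots,z_d\}$ is attained at least twice, i.e.\ $z\in H_{\bf 0}$. Thus $T_\omega$ restricts to a bijection from $H_\omega$ onto $H_{\bf 0}$. Combining the two facts, I would compute
\[
d_{\rm tr}(x,H_\omega)=\min_{y\in H_\omega}d_{\rm tr}(x,y)=\min_{y\in H_\omega}d_{\rm tr}(\omega+x,\;\omega+y)=\min_{z\in H_{\bf 0}}d_{\rm tr}(\omega+x,\;z)=d_{\rm tr}(\omega+x,H_{\bf 0}),
\]
where the second equality is translation invariance and the third is the change of variables $z=\omega+y$ justified by the bijection. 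I do not anticipate a genuine obstacle here; the only point requiring a little care is checking that the translation bijection exactly matches hyperplane membership on the two sides and that it descends correctly to the quotient torus, both of which are immediate from the definitions. (Note that Proposition \ref{pp:phdis} is not even needed for this reduction; it would only be used if one wanted the resulting distance in closed form.)
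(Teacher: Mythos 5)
Your proof is correct: translation invariance of $d_{\rm tr}$ together with the fact that $y\mapsto\omega+y$ carries $H_\omega$ bijectively onto $H_{\bf 0}$ (and descends to the quotient $\mathbb R^d\!/\mathbb R{\bf 1}$) gives the identity immediately by a change of variables in the minimization. The paper itself states this corollary by citation to G\"artner and Jaggi without reproducing a proof, and your argument is exactly the standard one; you are also right that Proposition \ref{pp:phdis} is not needed for this step.
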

  
  \begin{example}\label{Ex:drpH}
    Suppose $x=(1, 2, 0)\in \mathbb R^3 \!/\mathbb R {\bf
    1}$. By Proposition \ref{pp:phdis}, $d_{\rm tr}(x, H_{\bf 0}) = 2 - 1 =1$. See 
    $x$ and $H_{\bf 0}$ in Figure \ref{fig:distancetohyperplane}.   
      \begin{figure}
      \centering
      \includegraphics[width=0.4\textwidth]{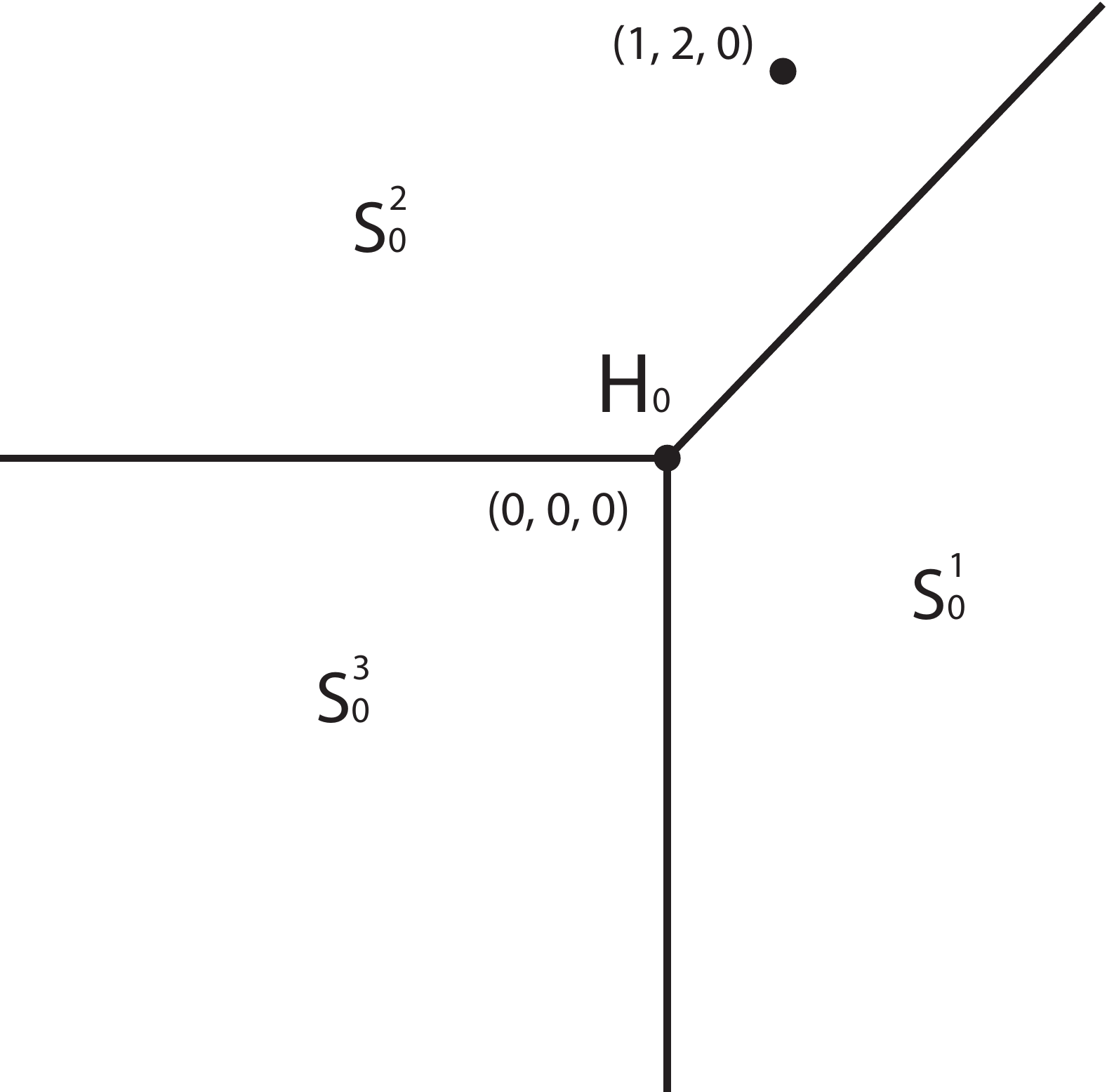}
      \caption{The tropical hyperplane $H_{\bf 0}$  in $\mathbb R^3 \!/\mathbb R {\bf 1}$}
      \label{fig:distancetohyperplane}
  \end{figure}
    \end{example}

   Assume $P$ and $Q$ ($P\cap Q=\emptyset$ and $|P|=|Q|=n$) are subsets of $\mathbb R^d \!/\mathbb R {\bf
    1}$. Suppose we have a dataset
$\{(x^{(1)}, y_1), \ldots (x^{(2n)}, y_{2n}) \}$,
where $x^{(1)}, \ldots , x^{(2n)} \in \mathbb{R}^d \!/\mathbb R {\bf 1}$, and $y_k$ is a binomial response variable such that if $y_k=0$, then $x^{(k)} \in P$ and if $y_k=1$, then 
   $x^{(k)}\in Q$.
        Our goal is to find a tropical hyperplane $H_\omega$ such that
   $P$ and $Q$ can be separated by the hyperplane and the minimum distance from the  points in $P \cup Q$ to the hyperplane $H_\omega$ can be maximized.  
    Recall that in Euclidean spaces, two categories of data  might be linearly separable (i.e., two categories can be strictly separated by a hyperplane, see \cite[Page 514]{dtamining}) or 
   nonseparable. So a classical SVM in a Euclidean space  has two versions of formulations: hard margin and soft margin. Similarly, in this section,  we discuss
   two  formulations of tropical SVMs: hard margin (Section \ref{sec:hardmargin}) and soft margin (Section \ref{sec:softmargin}). 
    
    \subsection{Hard Margin}\label{sec:hardmargin}
    
    In this section, we introduce hard margin tropical SVMs for classifying two  ``separable" sets in   $\mathbb R^d \!/\mathbb R {\bf
    1}$. 
   First, we formally define tropically separable  sets  and tropical separating hyperplanes in $\mathbb R^d \!/\mathbb R {\bf
    1}$ (see Deifnition \ref{def:sep}). 
   Similar to linearly separable data in Euclidean spaces, each point in $\mathbb R^d \!/\mathbb R {\bf
    1}$ from tropically separable sets should stay in an open sector of a tropical separating hyperplane (see (i) in Definition \ref{def:sep}), 
    and any two points from different categories should stay in different open sectors (see (ii) in Definition \ref{def:sep}). 
    \begin{definition}[Tropically Separable Sets and Tropical Separating Hyperplane]\label{def:sep}
   For any two finite sets $P$ and $Q$ in $\mathbb R^d \!/\mathbb R {\bf
    1}$, 
    if there exists $\omega \in {\mathbb R}^d$ such that  
    \begin{itemize}
    \item[(i)] for any $\xi \in P\cup Q$, there exists an index $i\in \{1, \ldots, d\}$ such that 
     \[ \text{for any}\; j\in \{1, \ldots, d\}\backslash\{ i\},\;\; \omega_{i}+\xi_{i}\;>\;\omega_{j}+\xi_{j}, \;\;\;\text{and}\]
    \item[(ii)] for any $p \in P$, and for any $q\in Q$, we have 
    $$\argmax \limits_{1\leq k\leq d} \;\{\omega_{k}+p_{k}\} \;\neq\; \argmax \limits_{1\leq k\leq d}\; \{\omega_{k}+q_{k}\},$$
    \end{itemize}
    then we say $P$ and $Q$ are {\em tropically separable}, and we say $H_\omega$ is a {\em tropical separating hyperplane} for $P$ and $Q$. 
     \end{definition}
   We introduce hard margin tropical SVMs as follows. 
   Given two finite and  tropically separable sets $P$ and $Q$ in $\mathbb R^d \!/\mathbb R {\bf
    1}$,  assume $|P|=|Q|=n>0$ (remark that all our results can be directly extended when $|P|\neq |Q|$).  For any $\xi\in P\cup Q$, we denote by $i(\xi)$ and $j(\xi)$ two indices
  in terms of $\xi$, which are integers in the set $\{1, \ldots, d\}$. 
  We denote  the two sets of indices    $\{i(\xi)| \xi \in P\cup Q\}$ and  $\{j(\xi)| \xi \in P\cup Q\}$ by 
  ${\mathcal I}$ and ${\mathcal J}$, respectively. 
  We also assume that
  
   \noindent\begin{tabularx}{\textwidth}{@{}XXX@{}}
\begin{equation}\label{equation:255}
 \forall \xi \in P\cup Q, i(\xi)\neq j(\xi), \; \;\;\;\;\text{and}\;
\end{equation}&
\begin{equation}\label{equation:2551}
  \forall p\in P, \; \forall q\in Q, \;\; i(p) \neq i(q). 
\end{equation}
\end{tabularx}
  We formulate an optimization problem\footnote{Our formulation is modified from the original formulation proposed in \cite[Section 3.1]{Gartner}, which computes an optimal tropical hyperplane through one set of data points in tropical projective spaces. Notice that in their setting, they are using min-plus algebra while we are using max-plus algebra.} for solving the normal vector $\omega$ of an optimal tropical separating hyperplane $H_{\omega}$ for $P$ and $Q$:
    \begin{equation*}\label{equation:24}
  \begin{matrix}
  \displaystyle &\max \limits_{\omega \in \mathbb{R}^{d}}\;
  \min \limits_{\xi\in P\cup Q}\;\{\xi_{i(\xi)}+\omega_{i(\xi)}-\xi_{j(\xi)}-\omega_{j(\xi)}\} \\
  \textrm{s.t.} & \forall \xi \in P\cup Q,\;\forall l \neq i(\xi), j(\xi),\;\; (\xi+\omega)_{l}\leq (\xi+\omega)_{j(\xi)}\leq(\xi+\omega)_{i(\xi)}.
  \end{matrix}
  \end{equation*}
By the constraints above we mean that   $i(\xi)$ and $j(\xi)$ respectively give the largest and the second largest coordinate of the vector $\xi+\omega$ for each $\xi \in P\cup Q$  (the assumption \eqref{equation:255} ensures these two indices are different), and 
 the object is to maximize the minimum distance $$d_{\rm tr}(\xi, H_\omega)\;=\;\xi_{i(\xi)}+\omega_{i(\xi)}-\xi_{j(\xi)}-\omega_{j(\xi)}.$$ 
  Note that this optimization  problem can be explicitly written as a linear programming problem \eqref{equation:251}--\eqref{equation:254} below, where the optimal solution $z$ means the {\em margin} of the tropical SVM (i.e., the shortest distance from the data point to the tropical separating hyperplane):
    \begin{align}
  &\max \limits_{z \in \mathbb{R}} \; z \label{equation:251} \\
  \textrm{s.t.}\;\;  \forall \xi \in P\cup Q, &\;\;z+\textcolor{black}{\xi_{j(\xi)}}+\omega_{j(\xi)}\textcolor{black}{-\xi_{i(\xi)}}-\omega_{i(\xi)}\leq 0,  \label{equation:252}\\
  \forall \xi \in P\cup Q, &\;\; \omega_{j(\xi)}-\omega_{i(\xi)}\leq \xi_{i(\xi)}-\xi_{j(\xi)}, \label{equation:253} \\
  \forall \xi \in P\cup Q, &\;\forall l \neq i(\xi), j(\xi),\;\; \omega_{l}-\omega_{j(\xi)}\leq \xi_{j(\xi)}-\xi_{l}. \label{equation:254}
  \end{align}
  
 \begin{remark}
  It is straightforward to see  
 \eqref{equation:252} and \eqref{equation:253} imply that the maximum $z$ must be non-negative. So, 
 we do not add the constraint $z\geq 0$ into the above linear programing problem. 
 \end{remark} 

  
  \begin{definition}[Feasibility and Optimal Solution (hard margin)]\label{def:feasible}
Suppose we have two sets $P$ and $Q$ in $\mathbb R^d \!/\mathbb R {\bf
    1}$, and assume sets of indices ${\mathcal I}:=\{i(\xi)|\xi\in P\cup Q\}$ and ${\mathcal J}:=\{j(\xi)|\xi \in P\cup Q\}$ satisfy the conditions \eqref{equation:255}--\eqref{equation:2551}. 
 If there exists $(z; \omega)\in {\mathbb R}^{d+1}$ such that the inequalities \eqref{equation:252}--\eqref{equation:254} hold, then we say 
$(z; \omega)\in {\mathbb R}^{d+1}$ is a {\em feasible solution} to the linear programming problem \eqref{equation:251}--\eqref{equation:254} , and
we say $P$ and $Q$ are {\em feasible} with respect to (w.r.t.) ${\mathcal I}$ and ${\mathcal J}$.  
If $(z; \omega)\in {\mathbb R}^{d+1}$ is a feasible solution such that the objective function in \eqref{equation:251} reaches its maximum value, then we say 
$(z; \omega)$ is an {\em optimal solution} to the linear programming problem. 
\end{definition}

Note that 
if $P$ and $Q$ are not tropically separable, 
 then they might not be feasilbe w.r.t. any sets of indices ${\mathcal I}$ and ${\mathcal J}$. 
On the other hand, if  $P$ and $Q$ are tropically separable, Theorem \ref{thm:sep} below ensures that 
 they are feasible w.r.t. some ${\mathcal I}$ and  ${\mathcal J}$, which also explains why
 the  hard margin tropical SVMs we propose here are indeed the analogies of hard margin SVMs in Euclidean spaces.

 \begin{theorem}\label{thm:sep}
 If two finite sets $P$ and $Q$  in $\mathbb R^d \!/\mathbb R {\bf
    1}$ are tropically separable, then there exist  $\omega\in \mathbb R^d$ and two sets of indices ${\mathcal I}:=\{i(\xi)| \xi \in P\cup Q\}$ and  ${\mathcal J}:=\{j(\xi)| \xi \in P\cup Q\}$ such that 
    the constraints \eqref{equation:255}--\eqref{equation:2551} and  \eqref{equation:252}--\eqref{equation:254} are satisfied. More than that, if $(z^*; \omega^*)$ is an optimal solution to \eqref{equation:251}--\eqref{equation:254} w.r.t. ${\mathcal I}$ and ${\mathcal J}$,
    then  the margin $z^*$ (i.e., the optimal value of \eqref{equation:251}) is positive, and $H_{\omega^*}$ is a separating tropical  hyperplane for $P$ and $Q$. 
       \end{theorem}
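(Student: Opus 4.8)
The plan is to use the normal vector supplied by tropical separability to \emph{read off} the index sets $\mathcal{I}$ and $\mathcal{J}$, verify the linear constraints by direct rewriting, and then exploit the strict positivity of the margin to recover open-sector separation at an optimum.

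First I would fix a vector $\omega\in\mathbb R^d$ witnessing that $P$ and $Q$ are tropically separable in the sense of Definition \ref{def:sep}. For each $\xi\in P\cup Q$, part (i) of that definition says $\omega+\xi$ has a \emph{unique} largest coordinate, so I set $i(\xi):=\argmax_{1\le k\le d}\{\omega_k+\xi_k\}$ and let $j(\xi)$ be any index attaining the second largest value of $\omega_k+\xi_k$. With these choices, \eqref{equation:255} holds because the maximum is strict, and \eqref{equation:2551} is exactly part (ii) of Definition \ref{def:sep}. Rewriting \eqref{equation:253} as $\omega_{i(\xi)}+\xi_{i(\xi)}\ge\omega_{j(\xi)}+\xi_{j(\xi)}$ makes it immediate since $i(\xi)$ is the argmax, and rewriting \eqref{equation:254} as $\omega_l+\xi_l\le\omega_{j(\xi)}+\xi_{j(\xi)}$ for $l\ne i(\xi),j(\xi)$ makes it immediate since $j(\xi)$ indexes the second largest coordinate. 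Finally \eqref{equation:252} amounts to $z\le(\omega_{i(\xi)}+\xi_{i(\xi)})-(\omega_{j(\xi)}+\xi_{j(\xi)})$; each of these gaps is strictly positive by part (i), so taking $z$ equal to the minimum of the gaps over $\xi\in P\cup Q$ produces a feasible point $(z;\omega)$ with $z>0$. This settles the feasibility assertion, and since the program \eqref{equation:251}--\eqref{equation:254} maximizes $z$ over the feasible set with $\mathcal{I},\mathcal{J}$ fixed, the optimal value satisfies $z^*>0$ at once.

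The remaining and more delicate task is to show that $H_{\omega^*}$ actually separates $P$ and $Q$. Here I would use $z^*>0$ to upgrade the weak inequalities of the constraints into the strict inequalities demanded by Definition \ref{def:sep}. From \eqref{equation:252} at the optimum, $\omega^*_{i(\xi)}+\xi_{i(\xi)}-(\omega^*_{j(\xi)}+\xi_{j(\xi)})\ge z^*>0$, so the $i(\xi)$-coordinate of $\omega^*+\xi$ strictly exceeds its $j(\xi)$-coordinate; and for every $l\ne i(\xi),j(\xi)$, \eqref{equation:254} gives $\omega^*_l+\xi_l\le\omega^*_{j(\xi)}+\xi_{j(\xi)}<\omega^*_{i(\xi)}+\xi_{i(\xi)}$. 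Thus $i(\xi)$ is the unique maximizer of $\omega^*+\xi$, i.e. $\xi\in S_{\omega^*}^{i(\xi)}$, giving part (i); and since the argmax of $\omega^*+p$ is $i(p)$, that of $\omega^*+q$ is $i(q)$, and $i(p)\ne i(q)$ by \eqref{equation:2551}, points of $P$ and $Q$ occupy distinct open sectors, giving part (ii).

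I expect the crux to be exactly this last implication. The constraints \eqref{equation:253}--\eqref{equation:254} on their own only place each $\xi$ in a \emph{closed} sector $\overline{S}_{\omega^*}^{i(\xi)}$, leaving open the possibility of a tie with the boundary; it is the strict positivity $z^*>0$ — forced by optimality together with the positive-margin feasible point constructed above — that drives each point into the interior of its sector and so yields genuine (open-sector) tropical separation.
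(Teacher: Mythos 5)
Your proof is correct and follows essentially the same route as the paper's: extract the index sets from the separating $\omega$ of Definition \ref{def:sep} (argmax and second argmax of $\omega+\xi$), verify feasibility of \eqref{equation:252}--\eqref{equation:254} with a strictly positive $z$, and use $z^*>0$ at the optimum to recover open-sector separation. You are in fact more explicit than the paper at the last step, where the paper simply asserts that feasibility of $(z^*;\omega^*)$ yields condition (i) of Definition \ref{def:sep}; your observation that the constraints alone only give closed-sector containment and that the strictness must come from $z^*>0$ is the correct reading of that step.
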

      \begin{proof}
 In fact, 
 by Definition \ref{def:sep},  there exist indices ${\mathcal I}$ and  ${\mathcal J}$ and a tropical separating hyperplane $H_{\omega}$ such that 
 \eqref{equation:255}--\eqref{equation:2551} and  \eqref{equation:252}--\eqref{equation:254} are satisfied. 
  The condition (i) in Definition \ref{def:sep} ensures that the distance $d_{\rm tr}(\xi, H_\omega)$ is nonzero for any $\xi \in P\cup Q$, 
 and hence, the value of $z$ corresponding to $\omega$ (i.e., the minimum distance from the points in $P\cup Q$ to $H_{\omega}$) is positive. 
  If $(z^*; \omega^*)$ is an optimal solution  w.r.t. ${\mathcal I}$ and ${\mathcal J}$, then $z^*\geq z>0$. Note $(z^*;\omega^*)$ is also a feasible solution, so  \eqref{equation:252}--\eqref{equation:254} also hold
  for $\omega^*$ w.r.t. ${\mathcal I}$ and ${\mathcal J}$. So, the condition (i) in Definition \ref{def:sep} holds for $\omega^*$. By  \eqref{equation:2551}, 
 any two points $p$ and $q$ from different sets will be located in different open sectors $S^{i(p)}_{\omega^*}$ and $S^{i(q)}_{\omega^*}$. So, the 
 the condition (ii) in Definition \ref{def:sep} is also satisfied for $\omega^*$. Therefore, $H_{\omega^*}$ is a separating tropical  hyperplane for $P$ and $Q$. 
 \end{proof}

  \begin{example}\label{Ex:lp}
Here we demonstrate how to formulate the linear programming \eqref{equation:251}--\eqref{equation:254} for a pair of tropically separable sets $P$ and $Q$.
Suppose we have four trees (with four leaves each) shown in Figure \ref{fig:four_leaves}, which form two categories $P=\{p^{(1)}, p^{(2)}\}$ and $Q=\{q^{(1)}, q^{(2)}\}$. 
The corresponding ultrametrics are
\[
\begin{array}{cc}
  p^{(1)}:(4, 10  , 20 ,  10 ,  20  , 20), \;\;  & p^{(2)}:(8  , 16  , 20  , 16 ,  20 ,  20) \\
  q^{(1)}:(2 ,  20 ,  20 ,  20  , 20 ,  10), \;\;  & q^{(2)}:(6,   20 ,  20,   20 ,  20  , 18).
\end{array}{}
\]
\begin{figure}[!ht]
    \centering
    \includegraphics[width=0.6\textwidth]{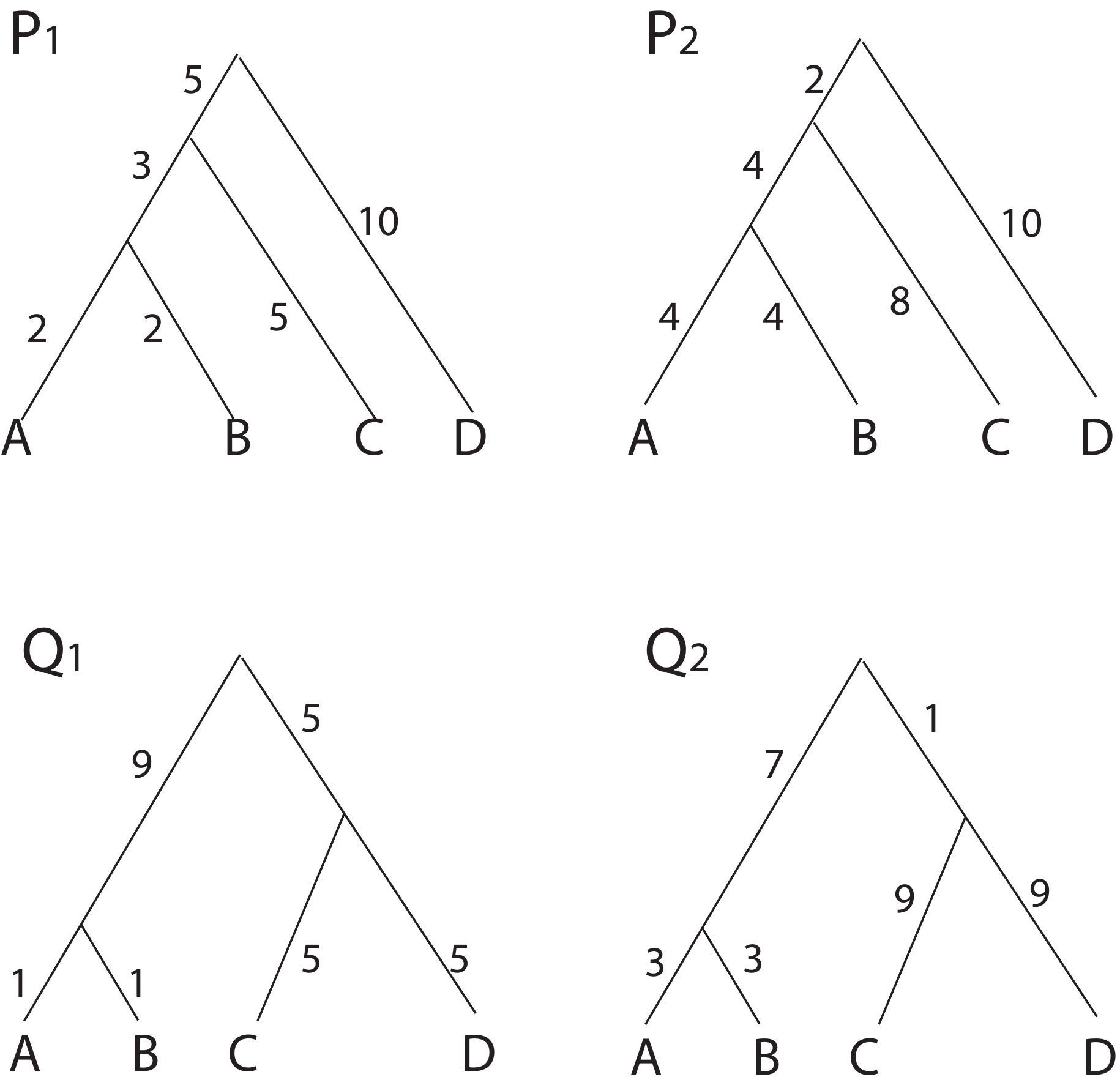}
    \caption{Four trees in Example \ref{Ex:lp}.}
    \label{fig:four_leaves}
\end{figure}
Here, for $k=1, 2$, we set $i(p^{(k)})=5$, $j(p^{(k)})=6$, $i(q^{(k)})=4$ and $j(q^{(k)})=2$.
 The linear programming \eqref{equation:251}--\eqref{equation:254}  becomes the following:
 {\tiny
\begin{align*}
\max \limits_{z \in \mathbb{R}} s.t.
  \begin{array}{cccc}
     z+20+\omega_{6}-20-\omega_{5}\leq0, \;&  z+20+\omega_{6}-20-\omega_{5}\leq0,\;&z+20+\omega_{2}-20-\omega_{4}\leq0, \;&\; z+20+\omega_{2}-20-\omega_{4}\leq0,\\
     \omega_{6}-\omega_{5}\leq 20-20,  \; &\omega_{6}-\omega_{5}\leq 20-20,\;&\omega_{2}-\omega_{4}\leq 20-20,  \; &\omega_{2}-\omega_{4}\leq 20-20,\\
     \omega_{1}-\omega_{6}\leq 20-4, \;  &\omega_{1}-\omega_{6}\leq 20-8,\;&\omega_{1}-\omega_{2}\leq 20-2, \;  &\omega_{1}-\omega_{2}\leq 20-6,\\
     \omega_{2}-\omega_{6}\leq 20-10, \;  &\omega_{2}-\omega_{6}\leq 20-16, \;&\omega_{3}-\omega_{2}\leq 20-20, \; &\omega_{3}-\omega_{2}\leq 20-20,\\
     \omega_{3}-\omega_{6}\leq 20-20, \; &\omega_{3}-\omega_{6}\leq 20-20,\;& \omega_{5}-\omega_{2}\leq 20-20, \; & \omega_{5}-\omega_{2}\leq 20-20,\\
     \omega_{4}-\omega_{6}\leq 20-10, \; & \omega_{4}-\omega_{6}\leq 20-16,\;&\omega_{6}-\omega_{2}\leq 20-10, \; &\omega_{6}-\omega_{2}\leq 20-18.
     \end{array}
\end{align*}
}By solving this linear programming via the {\tt lpSolve} package in {\tt R} \cite{lpsolve}, we would obtain the optimal solution $(z^*; \omega^*)$, where $z^*=2$ and 
$\omega^* = (0,2, 0, 4, 2, 0)$. It is straightforward to check 
that points in $P$ and $Q$ are located in the open sectors $S_{\omega^*}^5$ and $S_{\omega^*}^4$, respectively.
\end{example}{}

\begin{example}\label{ex:non-feasible}
The sets $P$ and $Q$ in Example \ref{Ex:lp} might not be feasible w.r.t. some sets of indices ${\mathcal I}$ and ${\mathcal J}$. 
For instance, if we replace the indices in Example \ref{Ex:lp} with
\[
i(p^{(k)})=5, \;\;  j(p^{(k)})=6, \;\;  i(q^{(k)})=4 \;\;  and  \;\; j(q^{(k)})=2 \;(k=1, 2),
\]
then the linear programming will have no feasible solution.  
\end{example}

\begin{remark}
Example \ref{Ex:lp} and Example \ref{ex:non-feasible} show that even tropically separable sets $P$ and $Q$ might not be feasible for some sets of indices ${\mathcal I}$ and ${\mathcal J}$. 
Theorem \ref{thm:sep} shows feasible sets of indices must exist for two tropically separable sets (for instance, the sets $P$ and $Q$ in \ref{Ex:lp} are feasible w.r.t. the indices shown in that example). 
But Theorem \ref{thm:sep} does not say how to find ${\mathcal I}$ and ${\mathcal J}$ such that $P$ and $Q$ are feasible w.r.t. ${\mathcal I}$ and ${\mathcal J}$. 
In general, in order to find the feasible indices, 
we need go over all possible choices for those indices. That means we need to solve $d^{2n}(d-1)^{2n}$ linear programming problems in $d+1$ variables with 
$dn$ constraints (recall that $d=\binom{N}{2}$, where $N$ is the number of leaves, and 
recall that  $n$ is the cardinality of $P$ and $Q$).  
\end{remark}

In order to avoid computations on the non-feasible cases,   we would like to ask
 {\em under what conditions (tropically separable)  sets $P$ and $Q$ will be feasible w.r.t. two sets of indices ${\mathcal I}$ and ${\mathcal J}$?} 
 In the rest of this section, we answer this question when for all $p\in P$,
$i(p)$ and $j(p)$ are constants, say $i_P$ and $j_P$ and for all $q\in Q$, $i(q)$ and $j(q)$ are constants, say $i_Q$ and $j_Q$ (as what has been shown in Example \ref{Ex:lp}). Geometrically, this means that
the data points in $P$ and those in $Q$ will respectively stay in the same sector determined by a tropical separating hyperplane. Notice that  
if the four indices $i_P, j_P, i_Q$ and $j_Q$ satisfy the assumptions \eqref{equation:255}--\eqref{equation:2551}  (i.e. $i_P\neq j_P$,  $i_Q\neq j_Q$ and $i_P\neq i_Q$), then there are four cases:
\begin{itemize}
\item[]({\bf Case 1}). the four indices $i_P, j_P, i_Q$ and $j_Q$ are pairwise distinct; 
\item[]({\bf Case 2}).  $i_{P}=j_{Q}$ and $i_{Q}\neq j_P$, or, $i_{Q}=j_{P}$ and $i_{P}\neq j_Q$;
\item[]({\bf Case 3}).  $i_{P}=j_{Q}$ and $i_{Q}= j_P$;
\item[]({\bf Case 4}).  $j_Q=j_P$.
\end{itemize}

For each case above, we provide a sufficient and necessary condition for the feasibility of the sets $P$ and $Q$, and an explicit 
formula for the optimal value $z$. See Theorems \ref{thm:16}--\ref{theorem76}. We present the proofs in Appendix. 
If the given sets are not  tropically separable (it indeed happens a lot in practice), 
then we should apply the soft margin SVMs discussed in Section \ref{sec:softmargin}. 

\begin{remark}
Remark that in Theorems \ref{thm:16}--\ref{theorem76}, we do not require $P$ and $Q$ to be tropically separable. 
\end{remark}

\begin{theorem}\label{thm:16}
Suppose  $P$ and $Q$ are two finite sets in $\mathbb R^d \!/\mathbb R {\bf
    1}$. 
For all $p\in P$,
assume $i(p)$ and $j(p)$ are constants, say $i_P$ and $j_P$. For all $q\in Q$, assume $i(q)$ and $j(q)$ are constants, say $i_Q$ and $j_Q$.
If the four numbers $i_P, j_P, i_Q$ and $j_Q$ are pairwise distinct, then the linear programming \eqref{equation:251}--\eqref{equation:254} has a feasible solution if and only if
\begin{align}
  \max \{-F, \;-A-E\} \;\leq \;\min\{D+B, \; C\}. 
   \label{eq:thm163}
\end{align}
If a feasible solution exists, then the optimal value $z$ is given by
\begin{align}\label{eq:thm164}
\min\;\{\;  A+C+E,\; D+B+F, \; \frac{1}{2}\left(A+B+D+E\right) \;\},
\end{align}
where 
\begin{align}\label{eq:thm165}
\begin{array}{ccc}
A=\min\limits_{p \in P}\{p_{i_{P}}-p_{j_{P}}\},   &  C=\min\limits_{p \in P}\{p_{j_{P}}-p_{j_{Q}}\},  & E=\min\limits_{q\in Q}\{q_{j_Q}-q_{i_P}\}, \\
B=\min\limits_{p\in P}\{p_{j_{P}}-{p_{i_{Q}}}\}, &  D=\min\limits_{q\in Q}\{q_{i_{Q}}-{q_{j_{Q}}}\},  &  F=\min\limits_{q\in Q}\{q_{j_Q}-q_{j_P}\}.
  \end{array}
 \end{align}
\end{theorem}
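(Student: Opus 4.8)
The plan is to solve the linear programming problem \eqref{equation:251}--\eqref{equation:254} directly by eliminating the normal-vector coordinates, exploiting the fact that once the four indices $i_P, j_P, i_Q, j_Q$ are fixed and pairwise distinct, only the differences of $\omega_{i_P}, \omega_{j_P}, \omega_{i_Q}, \omega_{j_Q}$ enter the binding constraints. First I would substitute $i(p)=i_P,\ j(p)=j_P$ for $p\in P$ and $i(q)=i_Q,\ j(q)=j_Q$ for $q\in Q$ into \eqref{equation:252}--\eqref{equation:254} and take the minimum over each category. Since we work in the tropical projective torus we may normalize $\omega_{j_Q}=0$ and write $a=\omega_{i_P}$, $b=\omega_{j_P}$, $c=\omega_{i_Q}$. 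Using the definitions \eqref{eq:thm165}, the constraints collapse to $z\le A+a-b$ and $z\le D+c$ (from \eqref{equation:252}); $a-b\ge -A$ and $c\ge -D$ (from \eqref{equation:253}); and $c-b\le B$, $b\ge -C$, $a\le E$, $b\le F$ (the four relevant instances of \eqref{equation:254}). The remaining instances of \eqref{equation:254}, namely those with $l\notin\{i_P,j_P,i_Q,j_Q\}$, only bound the free coordinates $\omega_l$ from above, so they are satisfiable by taking each such $\omega_l$ small enough and play no role in feasibility or in the optimum.

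For the feasibility half, I would observe that $z$ appears only in upper-bound constraints, so the full system is feasible if and only if there exist $a,b,c$ satisfying the six $\omega$-constraints. Eliminating $a$ (which needs $b-A\le a\le E$) forces $b\le A+E$, and eliminating $c$ (which needs $-D\le c\le B+b$) forces $b\ge -B-D$; together with $b\ge -C$ and $b\le F$ this confines $b$ to the interval $[\max\{-C,-B-D\},\,\min\{F,A+E\}]$. This interval is nonempty precisely when $\max\{-C,-B-D\}\le \min\{F,A+E\}$, and unpacking the four scalar inequalities this encodes shows it is exactly \eqref{eq:thm163}. This gives the claimed equivalence for feasibility.

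For the optimal value, I would note that to maximize $z=\min\{A+a-b,\,D+c\}$ one pushes $a$ and $c$ to their upper limits $a=E$ and $c=B+b$, reducing the problem to maximizing the concave tent function $h(b)=\min\{A+E-b,\,D+B+b\}$ over the feasible interval for $b$. Here $h$ peaks at $b^\ast=\tfrac12(A+E-D-B)$ with value $\tfrac12(A+B+D+E)$. The key point, which I would verify from \eqref{eq:thm163}, is that $b^\ast$ always lies in $[-B-D,\,A+E]$, so the only endpoints that can cut off the peak are $-C$ and $F$. This yields three cases: if $b^\ast\in[-C,F]$ the optimum is $\tfrac12(A+B+D+E)$; if $b^\ast<-C$ it is attained at $b=-C$ and equals $A+C+E$; and if $b^\ast>F$ it is attained at $b=F$ and equals $D+B+F$. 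A short check, again using \eqref{eq:thm163}, shows that in each case the stated value is the smallest of the three quantities, so the optimum is uniformly $\min\{A+C+E,\,D+B+F,\,\tfrac12(A+B+D+E)\}$, which is \eqref{eq:thm164}.

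I expect the main obstacle to be the bookkeeping in the last step: confirming that the two auxiliary endpoints $b\ge -B-D$ and $b\le A+E$ never bind at the optimum, so that the three-case description is correct, and verifying that the piecewise answer coincides with the single minimum formula on the case boundaries. Both reduce to the same family of scalar inequalities as the feasibility condition, so once the reduction to $h(b)$ is in place the remaining work is elementary but must be carried out carefully to handle the boundary cases.
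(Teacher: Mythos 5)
Your proposal is correct and follows essentially the same route as the paper: after fixing the indices, both arguments eliminate the irrelevant coordinates $\omega_l$, reduce the six binding constraints to a one-dimensional interval condition on a single difference of $\omega$-coordinates (your $b$ is the paper's $-(\omega_{j_Q}-\omega_{j_P})$), obtain feasibility as nonemptiness of that interval, and derive the optimum by a three-case analysis of where the peak of the resulting piecewise-linear bound falls relative to the endpoints $-C$ and $F$. Your "tent function" packaging of the optimal-value step is a cleaner presentation of the paper's three upper-bound chains and explicit witnesses, but the substance is identical.
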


\begin{theorem}\label{thm:17}
Suppose  $P$ and $Q$ are two finite sets in $\mathbb R^d \!/\mathbb R {\bf
    1}$. 
For all $p\in P$,
assume $i(p)$ and $j(p)$ are constants, say $i_P$ and $j_P$. For all $q\in Q$, assume $i(q)$ and $j(p)$ are constants, say $i_Q$ and $j_Q$.

\begin{itemize}
    \item[(i)] If $i_{P}=j_{Q}$ and $i_{Q}\neq j_P$, then
   linear programming \eqref{equation:251}--\eqref{equation:254} has a feasible solution if and only if
   \begin{align}\label{eq:thm171}
   A+B+C\geq 0.
\end{align}

If a feasible solution exists, then the optimal value $z$ is given by
\begin{align*}
\min\;\{\;  A+B+C,\; \frac{1}{2}\left(A'+B+C\right) \},
\end{align*}
where 
\begin{align*}
\begin{array}{cc}
A'=\min\limits_{p \in P}\{p_{i_{P}}-p_{j_{P}}\},   &  A=\min\limits_{\xi \in P\cup Q}\{\xi_{i_{P}}-\xi_{j_{P}}\}, \\
B=\min\limits_{p\in P}\{p_{j_{P}}-{p_{i_{Q}}}\}, &  C=\min\limits_{q\in Q}\{q_{i_{Q}}-{q_{i_{P}}}\}.
  \end{array}
 \end{align*}
\item[(ii)] If $i_{Q}=j_{P}$ and $i_{P}\neq j_{Q}$, then
linear programming \eqref{equation:251}--\eqref{equation:254} has a feasible solution if and only if
\begin{align}\label{eq:thm173}
A+B+C\geq 0 .
\end{align}
If a feasible solution exists, then the optimal value $z$ is given by
\begin{align*}
\min\;\{\; A+B+C,\; \frac{1}{2}\left(A'+B+C\right)\; \},
\end{align*}
where 
\begin{align*}
\begin{array}{cc}
A'=\min\limits_{q \in Q}\{q_{i_{Q}}-q_{j_{Q}}\},   &  A=\min\limits_{\xi \in P\cup Q}\{\xi_{i_{Q}}-\xi_{j_{Q}}\}, \\
                                     B=\min\limits_{q\in Q}\{q_{j_{Q}}-{q_{i_{P}}}\}, &  C=\min\limits_{p\in P}\{p_{i_{P}}-{p_{i_{Q}}}\}.
                                     \end{array}
                                     \end{align*}
                                     \end{itemize}
                                     \end{theorem}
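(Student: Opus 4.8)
The plan is to reduce the linear program \eqref{equation:251}--\eqref{equation:254} to a small LP in three scalar unknowns and then solve that reduced LP explicitly. I would prove part (i) in full and deduce part (ii) from it by interchanging the roles of $P$ and $Q$: the relabeling $P\leftrightarrow Q$ carries the hypothesis $i_Q=j_P$, $i_P\neq j_Q$ of (ii) to the hypothesis $i_P=j_Q$, $i_Q\neq j_P$ of (i), and the program \eqref{equation:251}--\eqref{equation:254} treats all $\xi\in P\cup Q$ symmetrically, so it is invariant under this relabeling. Throughout part (i) the identity $i_P=j_Q$ holds, so only three distinct indices $i_P(=j_Q)$, $j_P$, $i_Q$ occur among the four.

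First I would write out \eqref{equation:252}--\eqref{equation:254} separately for $p\in P$ (with $(i(p),j(p))=(i_P,j_P)$) and for $q\in Q$ (with $(i(q),j(q))=(i_Q,i_P)$). I would then observe that for every index $l\notin\{i_P,j_P,i_Q\}$ the variable $\omega_l$ occurs only in constraints of type \eqref{equation:254}, always with coefficient $+1$ (i.e.\ only in upper bounds $\omega_l\le\cdots$); since such $l$ carry no lower bound, these $\omega_l$ can always be chosen small enough, so the corresponding constraints affect neither feasibility nor the optimal value and may be discarded.

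After discarding them I introduce the difference variables $s:=\omega_{i_P}-\omega_{j_P}$ and $t:=\omega_{i_Q}-\omega_{i_P}$, legitimate in $\mathbb R^d/\mathbb R\mathbf 1$ where only differences of $\omega$-coordinates matter. Taking minima over $P$ and over $Q$ of the surviving inequalities collapses the program to: maximize $z$ subject to $z\le A'+s$, $z\le C+t$, $s\ge -A$, $t\ge -C$, and $s+t\le B$. The one delicate point in this bookkeeping is that the objective bound $z\le A'+s$ comes from \eqref{equation:252} over $P$ alone (hence the constant $A'$), whereas the lower bound $s\ge -A$ comes from combining \eqref{equation:253} over $P$ with the $l=j_P$ instance of \eqref{equation:254} over $Q$, and it is the latter that upgrades $A'$ to $A=\min_{\xi\in P\cup Q}\{\xi_{i_P}-\xi_{j_P}\}$. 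Keeping this $A$-versus-$A'$ distinction straight is the crux of the argument.

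Feasibility of the reduced program is then immediate: $s\ge -A$, $t\ge -C$, $s+t\le B$ are jointly solvable iff the least attainable sum $-A-C$ satisfies $-A-C\le B$, i.e.\ iff $A+B+C\ge 0$, which is \eqref{eq:thm171}. For the optimal value I would note the program is bounded and that at an optimum one may take $s+t=B$, reducing the objective to $\max_s\min\{A'+s,\,C+B-s\}$ over $s\in[-A,\,B+C]$. The balance point $s^\ast=(B+C-A')/2$ gives value $\tfrac12(A'+B+C)$; I would check that $s^\ast\le B+C$ always holds under feasibility, since $A'\ge A$ forces $A'+B+C\ge A+B+C\ge 0$, so the only alternative regime is $s^\ast<-A$, where the decreasing branch pushes the optimum to the endpoint $s=-A$ with value $A+B+C$. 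Comparing the two regimes (noting $s^\ast\ge -A$ is exactly $\tfrac12(A'+B+C)\le A+B+C$) shows the optimum equals $\min\{A+B+C,\ \tfrac12(A'+B+C)\}$, as claimed. I expect the genuine work to lie in the careful derivation of these reduced constraints and the final explicit optimization; everything else is routine substitution.
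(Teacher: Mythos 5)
Your proposal is correct and follows essentially the same route as the paper's proof: you discard the irrelevant coordinates $\omega_l$, isolate the same binding constraints (in particular the crucial combination of \eqref{equation:253} over $P$ with the $l=j_P$ instance of \eqref{equation:254} over $Q$, which is exactly what upgrades $A'$ to $A$), and arrive at the same two candidate optima with the same case split on which is smaller. Your packaging as an explicit three-variable reduced LP solved by a one-dimensional balance-point argument is a cleaner presentation of the paper's bound-then-construct computation, but the substance is identical.
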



 \begin{theorem}\label{thm:15}
 Suppose  $P$ and $Q$ are two finite sets in $\mathbb R^d \!/\mathbb R {\bf
    1}$. 
  If for all $p\in P$,
  we have $i(p) = j(q)=k_1$, and for all $q\in Q$,  $i(q)=j(p)=k_2$, then the linear programming \eqref{equation:251}--\eqref{equation:254} has a feasible solution if and only if
  \begin{align}\label{eq:thm151}
  \max_{p \in P}\{p_{k_2}-p_{k_1}\} \leq \min\limits_{q \in Q}\{q_{k_2}-q_{k_1}\} 
  \end{align}
  If a feasible solution exists, then the optimal value $z$ is given by
  \begin{align}\label{eq:thm152}
  \frac{1}{2}(\min\limits_{p \in P}\{p_{k_1}-p_{k_2}\} + \min\limits_{q \in Q}\{q_{k_2}-q_{k_1}\})
  \end{align}
  \end{theorem}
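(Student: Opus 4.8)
The plan is to specialize the constraint system \eqref{equation:252}--\eqref{equation:254} to the present case, in which only the two coordinates $k_1,k_2$ ever occupy the top-two positions, and then reduce everything to a single scalar unknown $\delta := \omega_{k_1}-\omega_{k_2}$ together with $z$. First I would write out \eqref{equation:252} for each $p\in P$ (with $i(p)=k_1$, $j(p)=k_2$) and each $q\in Q$ (with $i(q)=k_2$, $j(q)=k_1$). This yields $z \le (p_{k_1}-p_{k_2})+\delta$ for every $p$ and $z \le (q_{k_2}-q_{k_1})-\delta$ for every $q$; writing $\alpha:=\min_{p\in P}\{p_{k_1}-p_{k_2}\}$ and $\beta:=\min_{q\in Q}\{q_{k_2}-q_{k_1}\}$, these collapse to the two inequalities $z\le\alpha+\delta$ and $z\le\beta-\delta$. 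Doing the same for \eqref{equation:253} gives $-\alpha\le\delta$ from the points of $P$ and $\delta\le\beta$ from the points of $Q$.

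The key step is to show that the remaining constraints \eqref{equation:254}, which involve the coordinates $\omega_l$ with $l\neq k_1,k_2$, never obstruct feasibility and do not affect the optimal $z$. For $p\in P$ each such constraint reads $\omega_l \le \omega_{k_2}+(p_{k_2}-p_l)$, and for $q\in Q$ it reads $\omega_l \le \omega_{k_1}+(q_{k_1}-q_l)$. Since $j(\xi)\in\{k_1,k_2\}$ always, a coordinate $\omega_l$ with $l\neq k_1,k_2$ appears in \eqref{equation:252}--\eqref{equation:254} only as the upper-bounded variable on the left of \eqref{equation:254}, and never as a $\omega_{j(\xi)}$ term; hence no constraint lower-bounds it. Consequently, for any choice of $\omega_{k_1},\omega_{k_2}$ one can pick each $\omega_l$ small enough to satisfy these inequalities, so \eqref{equation:254} decouples and the whole LP is equivalent to maximizing $z$ subject to $z\le\alpha+\delta$, $z\le\beta-\delta$, and $-\alpha\le\delta\le\beta$, with $\delta,z$ free reals.

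Finally I would analyze this two-variable problem. For feasibility, note that $z$ is bounded only from above, so a feasible solution exists precisely when the admissible interval $[-\alpha,\beta]$ for $\delta$ is nonempty, i.e.\ $-\alpha\le\beta$; rewriting $-\alpha=\max_{p\in P}\{p_{k_2}-p_{k_1}\}$ gives exactly \eqref{eq:thm151}. For the optimal value, for fixed $\delta$ the best $z$ is $\min\{\alpha+\delta,\ \beta-\delta\}$; the first term increases and the second decreases in $\delta$, so the maximum over $\delta\in[-\alpha,\beta]$ is attained where they coincide, at $\delta^\star=(\beta-\alpha)/2$, with value $(\alpha+\beta)/2$. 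A short check confirms that $\delta^\star\in[-\alpha,\beta]$ holds exactly under the feasibility condition, so the optimum equals $\tfrac12(\alpha+\beta)=\tfrac12\big(\min_{p\in P}\{p_{k_1}-p_{k_2}\}+\min_{q\in Q}\{q_{k_2}-q_{k_1}\}\big)$, matching \eqref{eq:thm152}. The only delicate point is the decoupling argument of the second paragraph: one must verify carefully that the auxiliary coordinates genuinely carry no lower bound, so that eliminating them alters neither the feasibility region nor the attainable value of $z$.
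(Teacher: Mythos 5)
Your proposal is correct and follows essentially the same route as the paper's proof: both reduce the system to the single quantity $\omega_{k_1}-\omega_{k_2}$, observe that the constraints \eqref{equation:253} sandwich it between $\max_{p\in P}\{p_{k_2}-p_{k_1}\}$ and $\min_{q\in Q}\{q_{k_2}-q_{k_1}\}$ (which gives \eqref{eq:thm151}), note that the remaining coordinates $\omega_l$ are only upper-bounded and can be taken sufficiently small, and obtain the optimum by summing the two $z$-inequalities and balancing them at $\omega_{k_1}-\omega_{k_2}=\tfrac12(\beta-\alpha)$. The decoupling point you flag as delicate is exactly the paper's one-line observation that sufficiently small $\omega_l$ always satisfy \eqref{equation:254}, and your verification of it is sound.
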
{}

                            \begin{theorem}\label{theorem76}
                            Suppose  $P$ and $Q$ are two finite sets in $\mathbb R^d \!/\mathbb R {\bf
    1}$. 
                            If for all $p\in P$ and for all $q\in Q$, the indices $i(p)$ and $i(q)$ are respectively constants, say $i_P$ and $i_Q$,
                            and for all $\xi \in P \cup Q$, $j(\xi)$ is a constant, say $j$, then the linear programming \eqref{equation:251}--\eqref{equation:254} has a feasible solution if and only if
                            \begin{align}\label{eq:thm141}
                            \max_{q\in Q}\{q_{i_{P}}-q_{j}\}~\leq~ \min\limits_{p\in P}\{p_{i_{P}}-p_{j}\}
                            \end{align}
                              and
                            \begin{align}\label{eq:thm142}
                            \max_{p\in P}\{p_{i_{Q}}-p_{j}\}~ \leq~ \min\limits_{q\in Q}\{q_{i_{Q}}-q_{j}\}.
                            \end{align}
                            If a feasible solution exists, then the optimal value $z$ is given by
                            \begin{align}\label{eq:thm143}
                            \min\{\;\min\limits_{p\in P}\{p_{i_{P}}-p_{j}\} + \min\limits_{q\in Q}\{q_{j}-q_{i_{P}}\}, \;\min\limits_{q\in Q}\{q_{i_{Q}}-q_{j}\} + \min\limits_{p\in P}\{p_{j}-p_{i_{Q}}\}\;\}.
                            \end{align}
                            \end{theorem}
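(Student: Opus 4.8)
The plan is to write the linear program \eqref{equation:251}--\eqref{equation:254} out explicitly in this case and observe that, after a harmless normalization, it decouples into two essentially one-dimensional problems. Since every constraint in \eqref{equation:252}--\eqref{equation:254} involves only differences of the coordinates of $\omega$, and $H_\omega$ is unchanged under adding a constant to $\omega$, I first normalize $\omega_j = 0$ (recall $j(\xi)=j$ is constant over $P\cup Q$). Writing $u := \omega_{i_P}$ and $v := \omega_{i_Q}$, the three indices $i_P, i_Q, j$ are pairwise distinct by \eqref{equation:255}--\eqref{equation:2551}, so $u$ and $v$ are genuinely distinct variables.

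Next I would catalog the constraints. For $p\in P$ (where $i(p)=i_P$), \eqref{equation:252} reads $z \le (p_{i_P}-p_j)+u$, \eqref{equation:253} reads $u \ge -(p_{i_P}-p_j)$, and \eqref{equation:254} with $l=i_Q$ gives $v \le p_j-p_{i_Q}$ (the remaining instances bounding the coordinates $\omega_l$, $l\notin\{i_P,i_Q,j\}$, from above). Symmetrically, for $q\in Q$ (where $i(q)=i_Q$), \eqref{equation:252} gives $z \le (q_{i_Q}-q_j)+v$, \eqref{equation:253} gives $v \ge -(q_{i_Q}-q_j)$, and \eqref{equation:254} with $l=i_P$ gives $u \le q_j-q_{i_P}$. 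The crucial point to record is that the two ``cross'' instances of \eqref{equation:254} — namely $l=i_Q$ coming from $P$ and $l=i_P$ coming from $Q$ — are exactly what couple $v$ to the data in $P$ and $u$ to the data in $Q$; the remaining coordinates $\omega_l$ carry only upper bounds, never enter the objective, and can always be chosen small enough, hence dropped from the analysis.

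Introducing $a := \min_{p\in P}(p_{i_P}-p_j)$, $b := \min_{q\in Q}(q_j-q_{i_P})$, $c := \min_{q\in Q}(q_{i_Q}-q_j)$, and $e := \min_{p\in P}(p_j-p_{i_Q})$, the surviving constraints collapse to $u\in[\max(-a,\,z-a),\,b]$ and $v\in[\max(-c,\,z-c),\,e]$. A feasible $(z;\omega)$ therefore exists for some $z$ (taking $z$ sufficiently negative, as nothing bounds $z$ from below) if and only if both intervals are nonempty, i.e. $-a\le b$ and $-c\le e$; using $\max_{q\in Q}(q_{i_P}-q_j)=-b$ and $\max_{p\in P}(p_{i_Q}-p_j)=-e$ turns these into exactly \eqref{eq:thm141} and \eqref{eq:thm142}. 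For the optimal value, nonemptiness of the two intervals at a given $z$ additionally forces $z-a\le b$ and $z-c\le e$; since $u$ and $v$ are independent and can be chosen separately, the largest admissible $z$ is $\min\{a+b,\,c+e\}$, which is precisely \eqref{eq:thm143}.

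I expect the only real obstacle to be the careful bookkeeping in the second step: ensuring that every instance of \eqref{equation:254} is accounted for and, in particular, not overlooking that $i_Q$ occurs as an admissible index $l$ in the $P$-constraints (and $i_P$ in the $Q$-constraints), since these two cross-terms are solely responsible for the upper bounds $v\le e$ and $u\le b$ that give the feasibility inequalities their content. Everything after that is a routine elimination of the free coordinates followed by a one-variable maximization.
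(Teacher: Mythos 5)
Your proposal is correct and follows essentially the same route as the paper's proof: both isolate the cross-constraints of \eqref{equation:254} with $l=i_Q$ (from $P$) and $l=i_P$ (from $Q$), reduce the problem to two independent interval conditions on $\omega_{i_P}-\omega_j$ and $\omega_{i_Q}-\omega_j$, discard the remaining coordinates $\omega_l$ as only upper-bounded, and read off the feasibility criteria and the optimal $z=\min\{a+b,\,c+e\}$. The normalization $\omega_j=0$ is a cosmetic simplification, not a different argument.
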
{}

   \subsection{Soft Margin}\label{sec:softmargin} 
   
   We introduce a tropical soft margin  SVM in \eqref{equation:421}--\eqref{equation:425}  by adding 
   non-negative slacker variables $\alpha_{\xi}$, $\beta_{\xi}$ and $\gamma_{\xi, l}$ into each constraint in the hard margin tropical SVM \eqref{equation:251}--\eqref{equation:254}. 
      
   \begin{align}
&  \max \limits_{\left(z; \alpha; \beta; \gamma \right) \in \mathbb{R}^{2dn+1}} \; z - {\mathcal C}\sum\limits_{\xi\in P\cup Q}\left(\alpha_{\xi}+\beta_{\xi}+\sum\limits_{l\neq i(\xi), j(\xi)}\gamma_{\xi, l}\right)  \label{equation:421} \\
  \textrm{s.t.}\;\; & \forall \xi \in P\cup Q, \;\;z+\textcolor{black}{\xi_{j(\xi)}}+\omega_{j(\xi)}\textcolor{black}{-\xi_{i(\xi)}}-\omega_{i(\xi)}\leq \alpha_{\xi},  \label{equation:422}\\
 & \forall \xi \in P\cup Q, \;\; \omega_{j(\xi)}-\omega_{i(\xi)}\leq \xi_{i(\xi)}-\xi_{j(\xi)} +\beta_{\xi}, \label{equation:423} \\
  &\forall \xi \in P\cup Q, \;\forall l \neq i(\xi), j(\xi),\;\; \omega_{l}-\omega_{j(\xi)}\leq \xi_{j(\xi)}-\xi_{l} + \gamma_{\xi, l}, \label{equation:424} \\
  &\forall \xi \in P\cup Q, \;\forall l \neq i(\xi), j(\xi),\;\;  \alpha_{\xi}\geq 0, \;\; \beta_{\xi}\geq 0, \;\;\gamma_{\xi, l} \geq 0, \;\; \text{and}\;\; z\geq 0, \label{equation:425}
  %
  \end{align}
  where $n=|P|=|Q|$,   the term $\sum\limits_{\xi\in P\cup Q}\left(\alpha_{\xi}+\beta_{\xi}+\sum\limits_{l\neq i(\xi), j(\xi)}\gamma_{\xi, l}\right)$ gives the {\em Hinge loss}
(i.e., a linear estimate of the deviation from the tropically separable case), and 
  ${\mathcal C}$  is a positive scalar that controls the trade-off between maximizing the margin $z$ or minimizing the loss (see e.g., \cite[Page 524, Formula (21.24)]{dtamining}).

          \begin{definition}[Feasibility and Optimal Solution (soft margin)]\label{def:feasiblesoft}
          Suppose we have two sets $P$ and $Q$ in $\mathbb R^d \!/\mathbb R {\bf
    1}$ $(n=|P|=|Q|)$, and assume two sets of indices ${\mathcal I}:=\{i(\xi)|\xi\in P\cup Q\}$ and ${\mathcal J}:=\{j(\xi)|\xi \in P\cup Q\}$ satisfy the conditions \eqref{equation:255}--\eqref{equation:2551}. 
If there exists $(z; \alpha; \beta; \gamma; \omega)\in {\mathbb R}^{2dn+d+1}$ such that the inequalities \eqref{equation:422}--\eqref{equation:425} hold, then we say 
$(z; \alpha; \beta; \gamma; \omega)$ is a {\em feasible solution to the linear programming \eqref{equation:422}--\eqref{equation:425}}.
If $(z; \alpha; \beta; \gamma; \omega)\in {\mathbb R}^{2dn+d+1}$ is a feasible solution such that the objective function in \eqref{equation:422} reaches its maximum value, then we say 
$(z; \alpha; \beta; \gamma; \omega)$ is an {\em optimal solution} to the linear programming problem. 
\end{definition}

\begin{proposition}\label{pp:subfeasible}
Given two  sets $P$ and $Q$ in $\mathbb R^d \!/\mathbb R {\bf
    1}$ $(n=|P|=|Q|)$, assume two sets of indices ${\mathcal I}$ and ${\mathcal J}$ satisfy the conditions \eqref{equation:255}--\eqref{equation:2551}. 
Then, the linear programming \eqref{equation:421}--\eqref{equation:425} has infinitely many  feasible solutions.  
\end{proposition}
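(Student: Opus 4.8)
The plan is to observe that in the soft-margin program the slack variables appear, one per constraint, on the right-hand sides of \eqref{equation:422}--\eqref{equation:424}, so they can absorb any possible violation; hence feasibility is automatic and, in fact, there is a whole continuum of feasible points. First I would fix an arbitrary normal vector $\omega \in \mathbb{R}^d$ together with $z = 0$, which is admissible since \eqref{equation:425} only requires $z \geq 0$. For this fixed pair $(z;\omega)$, each of the three families of constraints \eqref{equation:422}, \eqref{equation:423}, \eqref{equation:424} involves a distinct slack variable ($\alpha_\xi$, $\beta_\xi$, and $\gamma_{\xi,l}$, respectively), and these are indexed disjointly over $\xi \in P \cup Q$ and $l \neq i(\xi), j(\xi)$. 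I can therefore choose each slack independently as the positive part of its defining expression, e.g.
\[
\alpha_\xi := \max\{0,\; \xi_{j(\xi)} + \omega_{j(\xi)} - \xi_{i(\xi)} - \omega_{i(\xi)}\},
\]
and analogously $\beta_\xi$ from \eqref{equation:423} and $\gamma_{\xi,l}$ from \eqref{equation:424}. By construction every slack is nonnegative, so \eqref{equation:425} holds, and each defining inequality \eqref{equation:422}--\eqref{equation:424} is satisfied; thus $(0; \alpha; \beta; \gamma; \omega)$ is a feasible solution.

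To conclude that there are infinitely many such solutions, I would exhibit a one-parameter family. The cleanest route is to note that the preceding construction works verbatim for every $\omega \in \mathbb{R}^d$, and distinct $\omega$ yield distinct feasible points, so the feasible region is already infinite. Alternatively, starting from any single feasible solution, one may enlarge a chosen slack, say replace $\alpha_{\xi_0}$ by $\alpha_{\xi_0} + t$ for $t \geq 0$: this relaxes only the single constraint \eqref{equation:422} indexed by $\xi_0$, leaves all other constraints untouched, and keeps $\alpha_{\xi_0} + t \geq 0$, so every $t \geq 0$ gives a feasible solution.

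There is essentially no hard part here: the entire argument rests on the structural fact that each constraint of \eqref{equation:421}--\eqref{equation:425} carries its own independent nonnegative slack, so violations can never obstruct feasibility. The only points needing (routine) verification are that the slack variables $\alpha_\xi$, $\beta_\xi$, $\gamma_{\xi,l}$ are indexed disjointly and hence can be assigned independently, and that $z = 0$ is compatible with the constraint $z \geq 0$ in \eqref{equation:425}; both are immediate from the stated formulation.
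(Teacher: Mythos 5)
Your proposal is correct and follows essentially the same route as the paper's proof: fix an arbitrary $\omega$, set $z=0$, and define each slack variable as the positive part of the corresponding constraint violation, with infiniteness following from the arbitrariness of $\omega$. No substantive differences.
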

\begin{proof}
In fact, for any $\omega^* \in \mathbb{R}^{n}$, 
let $z^*=0$, and let
$$\alpha_{\xi}^*= \beta_{\xi}^* = \max \{0, \left(\omega^*_{j(\xi)}-\omega^*_{i(\xi)}\right) -  \left(\xi_{i(\xi)}-\xi_{j(\xi)}\right)\}, \;\text{for any} \; \xi \in P\cup Q,$$
 $$\gamma_{\xi, l}^* = \max \{0, \left(\omega^*_{l}-\omega^*_{j(\xi)}\right) -  \left(\xi_{j(\xi)}-\xi_{l}\right)\}, \;\text{for any} \; \xi \in P\cup Q, \; \text{for any}\; l\neq i(
 \xi), j(\xi).$$ It is straightforward to check that $(z^*; \alpha^*; \beta^*; \gamma^*; \omega^*)\in {\mathbb R}^{2dn+d+1}$ satisfies the inequalities \eqref{equation:422}--\eqref{equation:425}. 
 \end{proof}
 
  \begin{theorem}\label{thm:boundedsoft}
  Given two finite sets $P$ and $Q$ in $\mathbb R^d \!/\mathbb R {\bf
    1}$ $(n=|P|=|Q|)$, assume two sets of indices ${\mathcal I}$ and ${\mathcal J}$ satisfy the conditions \eqref{equation:255}--\eqref{equation:2551}. 
 If  both $P$ and $Q$ are non-empty, and if ${\mathcal C}\geq 1$, then the objective function in the linear programming \eqref{equation:421}--\eqref{equation:425} is upper bounded for any feasible solution
 $(z; \alpha; \beta; \gamma; \omega)\in {\mathbb R}^{2dn+d+1}$, which means 
 the maximum of the objective function is a finite real number.  
 \end{theorem}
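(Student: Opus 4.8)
The plan is to prove boundedness by exhibiting, for \emph{every} feasible solution, a single inequality of the form $2z \le M + S$, where $S := \sum_{\xi\in P\cup Q}\bigl(\alpha_\xi+\beta_\xi+\sum_{l\neq i(\xi),j(\xi)}\gamma_{\xi,l}\bigr)$ is the total Hinge loss and $M$ is a constant depending only on the data and the chosen indices. Once this is in hand, the objective of \eqref{equation:421} satisfies $z-\mathcal{C}S \le \tfrac12 M + (\tfrac12-\mathcal{C})S \le \tfrac12 M$, since $S\ge 0$ and $\mathcal{C}\ge 1>\tfrac12$; hence the objective is bounded above by the fixed number $\tfrac12 M$ on the whole feasible region. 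Because Proposition \ref{pp:subfeasible} guarantees the feasible region is non-empty, a feasible linear program whose objective is bounded above attains a finite maximum, which is exactly the assertion.

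The core step is to produce $2z \le M + S$, and the key idea is to build a cycle that cancels the free normal vector $\omega$; this is precisely where both $P$ and $Q$ being non-empty is used. I would fix one point $p\in P$ and one point $q\in Q$; by \eqref{equation:2551} their top indices satisfy $i(p)\neq i(q)$. Working in the coordinates of $\xi+\omega$, I would first lower-bound $(p+\omega)_{i(p)} - (p+\omega)_{i(q)}$ by combining \eqref{equation:422} (which gives $(p+\omega)_{i(p)}-(p+\omega)_{j(p)}\ge z-\alpha_p$) with \eqref{equation:424} applied at $l=i(q)$ (which gives $(p+\omega)_{i(q)}-(p+\omega)_{j(p)}\le \gamma_{p,i(q)}$), yielding $(p+\omega)_{i(p)} - (p+\omega)_{i(q)} \ge z - \alpha_p - \gamma_{p,i(q)}$. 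The symmetric argument with $p$ and $q$ interchanged gives $(q+\omega)_{i(q)} - (q+\omega)_{i(p)} \ge z - \alpha_q - \gamma_{q,i(p)}$. Adding the two inequalities makes the terms $\omega_{i(p)}$ and $\omega_{i(q)}$ cancel exactly, leaving $2z \le \bigl(p_{i(p)}-p_{i(q)}\bigr) + \bigl(q_{i(q)}-q_{i(p)}\bigr) + \alpha_p+\alpha_q+\gamma_{p,i(q)}+\gamma_{q,i(p)}$; setting $M := \bigl(p_{i(p)}-p_{i(q)}\bigr) + \bigl(q_{i(q)}-q_{i(p)}\bigr)$ and bounding the four \emph{distinct} non-negative slacks by the full sum $S$ gives $2z\le M+S$.

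The main obstacle is that the two $\gamma$-variables invoked above need not exist: \eqref{equation:424} only ranges over $l\neq i(\xi),j(\xi)$, so the step at $l=i(q)$ for $p$ requires $i(q)\neq j(p)$, and likewise $i(p)\neq j(q)$ for $q$ — coincidences that genuinely occur (for instance in Case 3, where $i_P=j_Q$ and $i_Q=j_P$). I would resolve this with a short case split: when $i(q)=j(p)$, the quantity $(p+\omega)_{i(p)}-(p+\omega)_{i(q)}$ equals $(p+\omega)_{i(p)}-(p+\omega)_{j(p)}$, which \eqref{equation:422} already bounds below by $z-\alpha_p$ with no $\gamma$ needed (the same bound, under the convention $\gamma_{p,i(q)}:=0$), and symmetrically when $i(p)=j(q)$. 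Thus in every admissible configuration of the indices one obtains the same inequality $2z\le M+S$, possibly with fewer slack terms, and the argument goes through uniformly. The remaining checks — that the invoked slacks are distinct non-negative variables so their sum is at most $S$, and that the $\omega$-cancellation is exact — are routine.
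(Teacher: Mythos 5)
Your proof is correct and uses essentially the same mechanism as the paper's: combine \eqref{equation:422} with \eqref{equation:424} for one point of $P$ and one of $Q$ so that the $\omega$-terms cancel, and then absorb the (distinct, non-negative) slack variables into the Hinge loss using ${\mathcal C}\geq 1$. Your symmetrized variant, which adds the two margin inequalities to obtain $2z\leq M+S$ uniformly (with the convention $\gamma_{\xi,l}:=0$ when $l$ coincides with $j(\xi)$), merely collapses the paper's four-case analysis (Cases A.1--B.2) into a single inequality; it is slightly cleaner but not a different argument.
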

 
 \begin{proof}
 Since both $P$ and $Q$ are non-empty, pick $p$ and $q$ from $P$ and $Q$ respectively. 
 By the assumptions \eqref{equation:255}--\eqref{equation:2551}, we know $i(p)\neq j(p)$, $i(q)\neq j(q)$, and $i(p) \neq i(q)$. Below, we prove the conclusion for the two cases: $i(p)\neq j(q)$ and $i(p)=j(q)$.  
 
 {\bf (Case A).} If $i(p)\neq j(q)$, then by \eqref{equation:424}, 
 we have 
\begin{align}\label{proof:bs1}
 \omega_{i(p)}-\omega_{j(q)}\leq q_{j(q)}-q_{i(p)} +\gamma_{q, i(p)}.
 \end{align}
 
 \begin{itemize}
 \item[]{\bf (Case A.1).} If $j(q)=j(p)$, then \eqref{proof:bs1} becomes
 \begin{align}\label{proof:bs2}
 \omega_{i(p)}-\omega_{j(p)}\leq q_{j(p)}-q_{i(p)} +\gamma_{q, i(p)}.
 \end{align}
Then,  by \eqref{equation:422} and \eqref{proof:bs2}, 
\begin{align*}
z \leq p_{i(p)} - p_{j(p)} + q_{j(p)}-q_{i(p)} +\gamma_{q, i(p)}  + \alpha_{p}.
 \end{align*}
 So, if ${\mathcal C}\geq 1$, then we have an upper bound for the objective function 
\[ z -{\mathcal C} \sum\limits_{\xi\in P\cup Q}\left(\alpha_{\xi}+\beta_{\xi}+\sum\limits_{l\neq i(\xi), j(\xi)}\gamma_{\xi, l}\right)\leq z - \alpha_{p}-\gamma_{q, i(p)}  \leq p_{i(p)} - p_{j(p)} + q_{j(p)}-q_{i(p)} .\]

\item[]{\bf (Case A.2).} If $j(q)\neq j(p)$, then by \eqref{equation:424},
 we have 
\begin{align}\label{proof:bs3}
 \omega_{j(q)}-\omega_{j(p)}\leq p_{j(p)}-p_{j(q)} +\gamma_{p, j(q)}.
 \end{align}
 By summing up \eqref{proof:bs1} and \eqref{proof:bs3}, we have 
 \begin{align}\label{proof:bs4}
  \omega_{i(p)}-\omega_{j(p)}\leq q_{j(q)}-q_{i(p)} +\gamma_{q, i(p)}+p_{j(p)}-p_{j(q)} +\gamma_{p, j(q)}.
 \end{align}
So if ${\mathcal C}\geq 1$, then by \eqref{equation:422} and \eqref{proof:bs4}, we have an upper bound for the objective function 
\[ z -{\mathcal C} \sum\limits_{\xi\in P\cup Q}\left(\alpha_{\xi}+\beta_{\xi}+\sum\limits_{l\neq i(\xi), j(\xi)}\gamma_{\xi, l}\right)\leq 
z - \alpha_{p}-\gamma_{q, i(p)} - \gamma_{p, j(q)}\leq 
q_{j(q)}-q_{i(p)} +p_{i(p)}-p_{j(q)}.\]

 \end{itemize}
 
{\bf (Case B).}
 If $i(p) = j(q)$, then by \eqref{equation:422}, 
 \begin{align}\label{proof:bs5}
 \omega_{i(p)}-\omega_{i(q)}\leq q_{i(q)}-q_{i(p)} +\alpha_{q}.
 \end{align}
 \begin{itemize}
 \item[]{\bf (Case B.1).} If $i(q)=j(p)$, then \eqref{proof:bs5} becomes
  \begin{align}\label{proof:bs6}
 \omega_{i(p)}-\omega_{j(p)}\leq q_{j(p)}-q_{i(p)} +\alpha_{q}.
 \end{align}
 So if ${\mathcal C}\geq 1$, then by \eqref{equation:422} and \eqref{proof:bs6}, we have an upper bound for the objective function 
\[ z -{\mathcal C} \sum\limits_{\xi\in P\cup Q}\left(\alpha_{\xi}+\beta_{\xi}+\sum\limits_{l\neq i(\xi), j(\xi)}\gamma_{\xi, l}\right)\leq 
z-\alpha_{q} - \alpha_{p}\leq 
q_{j(p)}-q_{i(p)} +p_{i(p)}-p_{j(p)}.\]
 
 \item[]{\bf (Case B.2).} If $i(q)\neq j(p)$,then by \eqref{equation:424}, we have 
 \begin{align}\label{proof:bs7}
 \omega_{i(q)}-\omega_{j(p)}\leq p_{j(p)}-p_{i(q)} +\gamma_{p, i(q)}.
 \end{align}
 By summing up \eqref{proof:bs5} and \eqref{proof:bs7}, 
  \begin{align}\label{proof:bs8}
 \omega_{i(p)}-\omega_{j(p)}\leq q_{i(q)}-q_{i(p)} +\alpha_{q} + p_{j(p)}-p_{i(q)} +\gamma_{p, i(q)}.
 \end{align}
 \end{itemize}
  So if ${\mathcal C}\geq 1$, then by \eqref{equation:422} and \eqref{proof:bs8}, we have an upper bound for the objective function 
\[ z -{\mathcal C} \sum\limits_{\xi\in P\cup Q}\left(\alpha_{\xi}+\beta_{\xi}+\sum\limits_{l\neq i(\xi), j(\xi)}\gamma_{\xi, l}\right)\leq 
z-\alpha_{q} - \alpha_{p} - \gamma_{p, i(q)}\leq 
q_{i(q)}-q_{i(p)} + p_{i(p)}-p_{i(q)}.\]
 \end{proof}
 
 \begin{remark}\label{rmk:boundedsoft1}
 Although we have assumed from the every beginning that $|P|=|Q|$, it is easily seen that 
 Theorem \ref{thm:boundedsoft} still holds if $|P|\neq |Q|$.    
 We emphasize the hypothesis ``both $P$ and $Q$ are non-empty" in Theorem \ref{thm:boundedsoft} because if one of $P$ and $Q$ is empty, then the 
 objective function in  the linear programming \eqref{equation:421}--\eqref{equation:425} might not be upper bounded for ${\mathcal C}\geq 1$. For instance, 
 if $P=\{p\}$ where $p = (5,5,4,3,2,1)$ with $i(p)=1$ and $j(p)=2$,  and if $Q$ is empty, then it is directly to check that the  objective function  is not upper bounded. However, the sets $P$ and $Q$ associated with a real dataset  should be both non-empty.
 \end{remark}
 
  \begin{remark}\label{rmk:boundedsoft2}
  We remark that if the assumption ${\mathcal C}\geq 1$ in Theorem \ref{thm:boundedsoft} is not satisfied, then 
  the 
 objective function in  the linear programming \eqref{equation:421}--\eqref{equation:425} might not be upper bounded.
 And, for ${\mathcal C}< 1$, it is also possible for the objective function to be upper bounded. 
 For instance, for one pair of simulated  sets $P$ and $Q$ we have tested (see ``unbounded.RData" in Table \ref{table:files}), the objective function is not upper bounded when ${\mathcal C}<0.00625$, and it is
 upper bounded when ${\mathcal C}\geq 0.00625$. 
 \end{remark}
 
 \begin{remark}
Similarly to the hard margin tropical SVMs, for fixed sets of indices ${\mathcal I}$ and ${\mathcal J}$, we can get an optimal tropical hyperplane $H_{\omega}$ for these indices by solving the linear programming problem \eqref{equation:421}--\eqref{equation:425}. In general, 
if we go over all possible choices for these indices, then we will get $d^{2n}(d-1)^{2n}$ linear programming problems in $2dn+d+1$ variables with 
$3dn+1$ constraints. 
\end{remark}

In order to save computational time, we again consider the simpler case discussed in Section \ref{sec:hardmargin}. We assume 
for all $p\in P$,
$i(p)$ and $j(p)$ are constants, say $i_P$ and $j_P$ and for all $q\in Q$, $i(q)$ and $j(q)$ are constants, say $i_Q$ and $j_Q$. In this case, it is possible for us to simplify 
 \eqref{equation:421}--\eqref{equation:425} by removing the slacker variables $\gamma_{\xi, l}$ for $l\neq i_P, j_P, i_Q, j_Q$, see
Proposition \ref{pp:gamma0}.
 
  \begin{proposition}\label{pp:gamma0}
  For all $p\in P$,
assume $i(p)$ and $j(p)$ are constants, say $i_P$ and $j_P$. For all $q\in Q$, assume $i(q)$ and $j(q)$ are constants, say $i_Q$ and $j_Q$. 
If $(z^*; \alpha^*; \beta^*; \gamma^*; \omega^*)$ 
 is an optimal solution   to  the linear programming \eqref{equation:421}--\eqref{equation:425}, then for  any $l\neq i_P, j_P, i_Q, j_Q$, we have $\gamma_{\xi, l}^*=0$ for all $\xi\in P\cup Q$.

\end{proposition}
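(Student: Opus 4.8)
The plan is to exploit that, for an index $l\notin\{i_P,j_P,i_Q,j_Q\}$, the coordinate $\omega_l$ of the normal vector is effectively a free variable: it enters the linear programming \eqref{equation:421}--\eqref{equation:425} only through the constraints \eqref{equation:424} carrying that same second index $l$, and each such constraint couples $\omega_l$ solely to the slacker variable $\gamma_{\xi,l}$. Indeed, because $i(\xi)\in\{i_P,i_Q\}$ and $j(\xi)\in\{j_P,j_Q\}$ for every $\xi\in P\cup Q$, an index $l$ outside $\{i_P,j_P,i_Q,j_Q\}$ is never equal to $i(\xi)$ or $j(\xi)$; hence $\omega_l$ appears neither in \eqref{equation:422} nor in \eqref{equation:423}, and it never appears as $\omega_{j(\xi)}$ on the right-hand side of \eqref{equation:424}. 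Rearranging \eqref{equation:424} for this $l$ shows that the only coupling is
\begin{equation*}
\gamma_{\xi,l}\;\geq\;(\omega_l+\xi_l)-(\omega_{j(\xi)}+\xi_{j(\xi)}),\qquad \xi\in P\cup Q,
\end{equation*}
to be read together with $\gamma_{\xi,l}\geq 0$ from \eqref{equation:425}.

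First I would argue by contradiction. Suppose $(z^*;\alpha^*;\beta^*;\gamma^*;\omega^*)$ is optimal yet $\gamma_{\xi_0,l_0}^*>0$ for some $\xi_0\in P\cup Q$ and some $l_0\notin\{i_P,j_P,i_Q,j_Q\}$. Since $j(\xi)\in\{j_P,j_Q\}$ for every $\xi$, the quantity $\omega_{j(\xi)}^*+\xi_{j(\xi)}$ does not involve $\omega_{l_0}$, so I may replace $\omega_{l_0}^*$ by any finite value
\begin{equation*}
\omega_{l_0}'\;\leq\;\min_{\xi\in P\cup Q}\bigl\{\,\omega_{j(\xi)}^*+\xi_{j(\xi)}-\xi_{l_0}\,\bigr\},
\end{equation*}
set $\gamma_{\xi,l_0}'=0$ for all $\xi\in P\cup Q$, and leave $z^*$, $\alpha^*$, $\beta^*$, every $\omega_k^*$ with $k\neq l_0$, and every $\gamma_{\xi,l}^*$ with $l\neq l_0$ unchanged.

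Next I would check feasibility of the modified point. The only constraints mentioning $\omega_{l_0}$ or any $\gamma_{\xi,l_0}$ are the instances of \eqref{equation:424} with second index $l_0$; by the choice of $\omega_{l_0}'$ these become $(\omega_{l_0}'+\xi_{l_0})-(\omega_{j(\xi)}^*+\xi_{j(\xi)})\leq 0=\gamma_{\xi,l_0}'$, hence hold, and $\gamma_{\xi,l_0}'\geq 0$ as required by \eqref{equation:425}. All other constraints of \eqref{equation:422}--\eqref{equation:425} are untouched, so they remain valid. Finally, comparing objective values in \eqref{equation:421}, the modification increases the objective by ${\mathcal C}\sum_{\xi\in P\cup Q}\gamma_{\xi,l_0}^*\geq {\mathcal C}\,\gamma_{\xi_0,l_0}^*>0$ because ${\mathcal C}>0$, contradicting optimality. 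Consequently $\gamma_{\xi,l}^*=0$ for all $\xi\in P\cup Q$ and all $l\notin\{i_P,j_P,i_Q,j_Q\}$.

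The only delicate point is the bookkeeping of precisely which constraints contain $\omega_{l_0}$; everything rests on the observation that an index outside $\{i_P,j_P,i_Q,j_Q\}$ is never an $i(\xi)$ or $j(\xi)$, which decouples $\omega_{l_0}$ from the ``active'' part of the program so that it can be driven down freely. After that, the conclusion follows from a one-line feasibility verification and a sign comparison of the objective, and no case analysis over the four index configurations (Cases 1--4) is needed.
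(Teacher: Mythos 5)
Your proof is correct and follows essentially the same route as the paper's: a contradiction argument that exploits the fact that $\omega_{l}$ for $l\notin\{i_P,j_P,i_Q,j_Q\}$ appears only in the constraints \eqref{equation:424}, so it can be decreased to make the corresponding slacks $\gamma_{\xi,l}$ unnecessary, strictly improving the objective. Your version is in fact slightly more careful than the paper's (you zero out $\gamma_{\xi,l_0}$ for all $\xi$ simultaneously and verify feasibility of every affected constraint, where the paper perturbs only a single pair), but the underlying idea is identical.
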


\begin{proof}
Assume that there exists $\tilde{l}\neq i_P, j_P, i_Q, j_Q$ such that $\gamma_{\xi, \tilde{l}}^*>0$ for some $\xi \in P\cup Q$. By \eqref{equation:424}, 
\[\omega^*_{\tilde{l}} - \omega^*_{j(\xi)}\; \leq\; \xi_{j(\xi)} -\xi_{\tilde{l}} + \gamma_{\xi, \tilde{l}}^*. \]
Let $\hat \omega^*_{\tilde{l}} = \omega^*_{\tilde{l}} -  \gamma_{\xi, \tilde{l}}^*$, and let $\hat  \gamma_{\xi, \tilde{l}}^* = 0$. We replace 
the coordinates $\omega^*_{\tilde{l}}$ and $\gamma_{\xi, \tilde{l}}^*$ of the vector $(z^*; \alpha^*; \beta^*; \gamma^*; \omega^*)$ with $\hat \omega^*_{\tilde{l}}$ and $\hat  \gamma_{\xi, \tilde{l}}^* $, and we call the resulting vector 
$(z^*; \alpha^*; \beta^*; \hat \gamma^*; \hat \omega^*)$. Then $(z^*; \alpha^*; \beta^*; \hat \gamma^*; \hat \omega^*)$ is still a feasible solution, for which
the objective function has the value
\[z^* -{\mathcal C} \sum\limits_{\xi\in P\cup Q}\left(\alpha^*_{\xi}+\beta^*_{\xi}+\sum\limits_{l\neq i(\xi), j(\xi)}\gamma^*_{\xi, l}\right) + \gamma_{\xi, \tilde{l}}^*\;>\; z^* -{\mathcal C} \sum\limits_{\xi\in P\cup Q}\left(\alpha^*_{\xi}+\beta^*_{\xi}+\sum\limits_{l\neq i(\xi), j(\xi)}\gamma^*_{\xi, l}\right) .\]
That means $(z^*; \alpha^*; \beta^*; \hat \gamma^*; \hat \omega^*)$ gives a larger function value to the objective function, which is a contradiction to the fact that  the objective function reaches its maximum at $(z^*; \alpha^*; \beta^*; \gamma^*; \omega^*)$. 
\end{proof}

\begin{corollary}\label{cry:gamma0}
 For all $p\in P$,
assume $i(p)$ and $j(p)$ are constants, say $i_P$ and $j_P$. For all $q\in Q$, assume $i(q)$ and $j(q)$ are constants, say $i_Q$ and $j_Q$. 
Then any point in $P$ is located in the closed sector $\overline{S}_{\omega^*}^{i_P}$ or in $\overline{S}_{\omega^*}^{j_P}$, and 
any point in $Q$ is located in the closed sector $\overline{S}_{\omega^*}^{i_Q}$ or in $\overline{S}_{\omega^*}^{j_Q}$ of an optimal tropical hyperplane $H_{\omega^*}$.
\end{corollary}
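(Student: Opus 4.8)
The plan is to reduce the statement to a comparison among only the four indices $i_P,j_P,i_Q,j_Q$ and then, at an optimum, to confine the largest coordinate of $\omega^*+p$ (resp. $\omega^*+q$) to $\{i_P,j_P\}$ (resp. $\{i_Q,j_Q\}$). First I would unpack what membership in a closed sector means: $p\in\overline{S}_{\omega^*}^{i_P}\cup\overline{S}_{\omega^*}^{j_P}$ is exactly the statement that $\max_l(\omega^*+p)_l$ is attained at $i_P$ or at $j_P$, and symmetrically for $q$. Thus it suffices to bound every other coordinate of $\omega^*+p$ by $\max\{(\omega^*+p)_{i_P},(\omega^*+p)_{j_P}\}$.

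For the coordinates $l\notin\{i_P,j_P,i_Q,j_Q\}$ this is immediate: Proposition \ref{pp:gamma0} gives $\gamma^*_{p,l}=0$, so constraint \eqref{equation:424} (with $j(p)=j_P$) reads $(\omega^*+p)_l\le(\omega^*+p)_{j_P}$, and symmetrically $(\omega^*+q)_l\le(\omega^*+q)_{j_Q}$ for $q\in Q$. Hence the largest coordinate of $\omega^*+p$ already lies in $\{i_P,j_P,i_Q,j_Q\}$, and the whole problem collapses to ruling out that this largest coordinate is $i_Q$ or $j_Q$. In Case 3 (and, more generally, whenever $\{i_Q,j_Q\}\subseteq\{i_P,j_P\}$) there is nothing left to check, so the remaining work is confined to the indices of $\{i_Q,j_Q\}$ that differ from $i_P,j_P$ --- namely $i_Q$ in Cases 2 and 4 and both $i_Q$ and $j_Q$ in Case 1.

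To dispose of these I would argue by contradiction together with a local perturbation of $\omega^*$. Suppose, say, that $(\omega^*+p_0)_{i_Q}>\max\{(\omega^*+p_0)_{i_P},(\omega^*+p_0)_{j_P}\}$ for some $p_0\in P$; then \eqref{equation:424} forces $\gamma^*_{p_0,i_Q}=(\omega^*+p_0)_{i_Q}-(\omega^*+p_0)_{j_P}>0$, which is penalized in the objective \eqref{equation:421}. I would then decrease $\omega_{i_Q}$ by a small $\varepsilon>0$: this relaxes \eqref{equation:424} at $l=i_Q$ for every $p\in P$, lowering $\sum_{p}\gamma_{p,i_Q}$, whereas $\omega_{i_Q}$ enters the $Q$-constraints only as the top index $i(q)$ in \eqref{equation:422}--\eqref{equation:423}. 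As long as each $q\in Q$ sits strictly inside its top sector (margin above $z^*$ in \eqref{equation:422} and above $0$ in \eqref{equation:423}), the move costs nothing, so the penalty strictly drops and the objective strictly increases, contradicting optimality. The symmetric perturbation, decreasing $\omega_{j_Q}$, handles the index $j_Q$, and interchanging the roles of $P$ and $Q$ (perturbing $\omega_{i_P},\omega_{j_P}$) yields the statement for the points of $Q$.

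The main obstacle is precisely the coupling exploited in the previous step: the coordinate $\omega_{i_Q}$ that I want to lower in order to pull $p_0$ out of sector $i_Q$ is the same coordinate that anchors the $Q$-points in their top sector. When some $q$-constraint in \eqref{equation:422} or \eqref{equation:423} is already tight, the perturbation is blocked and the naive one-coordinate move no longer strictly improves the objective. Overcoming this requires a more careful, possibly multi-coordinate, argument --- for instance combining the decrease of $\omega_{i_Q}$ with a compensating adjustment of $\omega_{j_Q}$ or of $z$, or invoking complementary slackness at the linear-programming optimum to show that the tight $Q$-constraints cannot absorb the change while $\gamma^*_{p_0,i_Q}>0$. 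I expect this tight-constraint bookkeeping, carried out separately for Cases 1, 2 and 4, to be the only substantive difficulty; the reduction to four indices via Proposition \ref{pp:gamma0} and the symmetric packaging for $Q$ are routine.
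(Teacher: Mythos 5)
Your reduction via Proposition \ref{pp:gamma0} --- forcing $\gamma^*_{\xi,l}=0$ for $l\notin\{i_P,j_P,i_Q,j_Q\}$ so that \eqref{equation:424} pins every such coordinate of $\omega^*+p$ below $(\omega^*+p)_{j_P}$ --- is exactly the first (and, in the paper, essentially the only) step of the published proof. Where you diverge is that you correctly observe this leaves the coordinates $i_Q$ and $j_Q$ (for $p\in P$) unaccounted for, since Proposition \ref{pp:gamma0} says nothing about $\gamma^*_{p,i_Q}$ or $\gamma^*_{p,j_Q}$, and you then attempt a perturbation argument to kill those slacks. That argument is not completed: as you yourself note, decreasing $\omega_{i_Q}$ tightens the $Q$-side constraints \eqref{equation:422}--\eqref{equation:423}, and when those are already tight the one-coordinate move does not strictly improve the objective; the compensating multi-coordinate adjustment or complementary-slackness bookkeeping you defer to is precisely the content that would be needed, and it is missing. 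So as it stands the proposal establishes only that the maximizing index of $\omega^*+p$ lies in $\{i_P,j_P,i_Q,j_Q\}$, not in $\{i_P,j_P\}$.

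For what it is worth, the paper's own proof does not supply this missing step either: it cites Proposition \ref{pp:gamma0} and then asserts $(p+\omega^*)_{l}\leq(p+\omega^*)_{j_P}$ ``for any $l\neq i_P, j_P$'', which silently extends the proposition to $l=i_Q,j_Q$, where \eqref{equation:424} still carries the possibly positive slack $\gamma_{p,l}$. Your analysis therefore isolates a genuine issue rather than a difficulty of your own making; but since your proposed repair is only sketched and its central obstacle is acknowledged rather than overcome, the proposal cannot be accepted as a complete proof of the corollary as stated.
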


\begin{proof}
Suppose $(z^*; \alpha^*; \beta^*; \gamma^*; \omega^*)$  is a feasible solution   to  the linear programming \eqref{equation:421}--\eqref{equation:425} such that the objective function reaches its maximum. 
By Proposition \ref{pp:gamma0}, 
for any $p
\in P$, we have 
\[\omega^*_{l} - \omega^*_{j_P}\; \leq\; p_{j_P} -p_{l} \; \Leftrightarrow \; (p+\omega^*)_{l}\; \leq\; (p+\omega^*)_{j_P} ,\]
for  any $l\neq i_P, j_P$. Then the maximum coordinate of vector $p+\omega^*$ can  be indexed by  
$i_P$ or $j_P$. So, 
any point in $P$ is located in the closed sector $\overline{S}_{\omega^*}^{i_P}$ or in $\overline{S}_{\omega^*}^{j_P}$. Similarly, we can show that  
any point in $Q$ is located in the closed sector $\overline{S}_{\omega^*}^{i_Q}$ or in $\overline{S}_{\omega^*}^{j_Q}$.
\end{proof}

Below,  for the four cases ({\bf Case 1})--({\bf Case 4}) list in 
Section \ref{sec:hardmargin}, 
 we respectively simplify 
\eqref{equation:421}--\eqref{equation:425} as four linear programing problems \eqref{equation:lp7}-\eqref{equation:lp5} (for ({\bf Case 2}), we only show the simplified linear programming problem for $i_P=j_Q$ and $i_Q\neq j_P$).  Notice that 
we remove the slacker variables $\gamma_{\xi, l}$ for $l\neq i_P, j_P, i_Q, j_Q$ by Proposition \ref{pp:gamma0}. Also, by Theorem \ref{thm:boundedsoft}, we set 
${\mathcal C}=1$ for making sure these 
linear programming problems have bounded optimal values. 

 \begin{minipage}{.5\textwidth}
  {\scriptsize
    \begin{align}
 & \max \limits_{\left(z; \alpha; \beta; \gamma \right) \in \mathbb{R}_{\geq 0}^{8n+1}} \; z - \sum\limits_{\xi\in P\cup Q}\left(\alpha_{\xi}+\beta_{\xi}\right) \notag\\
  &\;\;\;\;\;\;\;\;-\sum\limits_{p\in P, \; l=i_Q, j_Q}\gamma_{p, l}-\sum\limits_{q\in Q,\; l=i_P, j_P}\gamma_{q, l}    \notag \\
  \textrm{s.t.}&\;\;  \forall p \in P, \;\;z+\textcolor{black}{p_{j_P}}+\omega_{j_P}\textcolor{black}{-p_{i_P}}-\omega_{i_P}\leq \alpha_{p}, \notag\\
                                               &\;\; \;\;\;\;\;\;\;\; \;\;\;\;\;\;\;\; \omega_{j_P}-\omega_{i_P}\leq p_{i_P}-p_{j_P} +\beta_{p}, \notag\\
                                                 & \;\; \;\;\;\;\;\;\;\; \;\;\;\;\;\;\;\; \omega_{i_Q}-\omega_{j_P}\leq p_{j_P}-p_{i_Q}+\gamma_{p, i_Q}, \notag\\
   & \;\; \;\;\;\;\;\;\;\; \;\;\;\;\;\;\;\;\omega_{j_Q}-\omega_{j_P}\leq p_{j_P}-p_{j_Q}+\gamma_{p, j_Q}, \notag\\
    & \;\; \;\;\;\;\;\;\;\; \;\;\;\;\;\;\;\;\;\;\;  \omega_{l}-\omega_{j_P}\leq p_{j_P}-p_{l} \notag \\  
     & \;\; \;\;\;\;\;\;\;\; \;\;\;\;\;\;\;\;\;  \;\;\;\;\;\;\;\;\; \;\;\;\;\;\;\;\;\; \;\;\;\;\;   (\forall l \neq i_P, j_P,i_Q, j_Q),\notag\\
&  \forall q \in  Q, \;\;z+\textcolor{black}{q_{j_Q}}+\omega_{j_Q}\textcolor{black}{-q_{i_Q}}-\omega_{i_Q}\leq \alpha_{q}, \notag\\
                        &\;\; \;\;\;\;\;\;\;\; \;\;\;\;\;\;\;\; \omega_{j_Q}-\omega_{i_Q}\leq q_{i_Q}-q_{j_Q} +\beta_{q}, \notag\\
 & \;\; \;\;\;\;\;\;\;\; \;\;\;\;\;\;\;\;\omega_{i_P}-\omega_{j_Q}\leq q_{j_Q}-q_{i_P}+\gamma_{q, i_P}, \notag\\
&  \;\; \;\;\;\;\;\;\;\; \;\;\;\;\;\;\;\;\omega_{j_P}-\omega_{j_Q}\leq q_{j_Q}-q_{j_P}+\gamma_{q, j_P}, \notag\\
& \;\; \;\;\;\;\;\;\;\; \;\;\;\;\;\;\;\;\;\;\; \omega_{l}-\omega_{j_Q}\leq q_{j_Q}-q_{l}  \notag \\
& \;\; \;\;\;\;\;\;\;\; \;\;\;\;\;\;\;\;\;  \;\;\;\;\;\;\;\;\; \;\;\;\;\;\;\;\;\; \;\;\;\;\;     (\forall l \neq i_P, j_P,i_Q, j_Q). \tag{{\bf LP 1}} \label{equation:lp7}
  %
  \end{align}
  }
  \end{minipage}
   \begin{minipage}{.5\textwidth}
   {\scriptsize
     \begin{align}
& \max \limits_{\left(z; \alpha; \beta; \gamma \right) \in \mathbb{R}_{\geq 0}^{6n+1}} \; z - \sum\limits_{\xi\in P\cup Q}\left(\alpha_{\xi}+\beta_{\xi}\right) \notag\\
  &\;\; \;\;\;\;\;\;\;\; \;\;\;\;\;\;\;\; \;\;\;\;\;\;\;-\sum\limits_{p\in P}\gamma_{p, i_Q}-\sum\limits_{q\in Q}\gamma_{q, j_P}    \notag \\
  \textrm{s.t.}&\;\;  \forall p \in P, \;\;z+\textcolor{black}{p_{j_P}}+\omega_{j_P}\textcolor{black}{-p_{i_P}}-\omega_{i_P}\leq \alpha_{p}, \notag\\
                                               &\;\; \;\;\;\;\;\;\;\; \;\;\;\;\;\;\;\;\; \omega_{j_P}-\omega_{i_P}\leq p_{i_P}-p_{j_P} +\beta_{p}, \notag\\
                                      & \;\; \;\;\;\;\;\;\;\; \;\;\;\;\;\;\;\; \; \omega_{i_Q}-\omega_{j_P}\leq p_{j_P}-p_{i_Q}+\gamma_{p, i_Q}, \notag\\
    & \;\; \;\;\;\;\;\;\;\; \;\;\;\;\;\;\;\;\;\;\;\;  \omega_{l}-\omega_{j_P}\leq p_{j_P}-p_{l} \notag \\  
     & \;\; \;\;\;\;\;\;\;\; \;\;\;\;\;\;\;\;\;  \;\;\;\;\;\;\;\;\; \;\;\;\;\;\;\;\;\; \;\;\;\;\;  \;\;\;\;\;   (\forall l \neq i_P, j_P,i_Q),\notag\\
     & \notag\\
&  \forall q \in  Q, \;\;z+\textcolor{black}{q_{j_Q}}+\omega_{j_Q}\textcolor{black}{-q_{i_Q}}-\omega_{i_Q}\leq \alpha_{q}, \notag\\
                        &\;\; \;\;\;\;\;\;\;\; \;\;\;\;\;\;\;\; \omega_{j_Q}-\omega_{i_Q}\leq q_{i_Q}-q_{j_Q} +\beta_{q}, \notag\\
 & \;\; \;\;\;\;\;\;\;\; \;\;\;\;\;\;\;\;   \omega_{j_P}-\omega_{j_Q}\leq q_{j_Q}-q_{j_P}+\gamma_{q, j_P}, \notag\\
& \;\; \;\;\;\;\;\;\;\; \;\;\;\;\;\;\;\;\;\;\; \omega_{l}-\omega_{j_Q}\leq q_{j_Q}-q_{l}  \notag \\
& \;\; \;\;\;\;\;\;\;\; \;\;\;\;\;\;\;\;\;  \;\;\;\;\;\;\;\;\; \;\;\;\;\;\;\;\;\; \;\;\;\;\; \;\;\;\;\;      (\forall l \neq i_P, j_P,i_Q). \notag\\
 &   \tag{{\bf LP 2}}  \label{equation:lp6}
  %
  \end{align}
  }
  \end{minipage}
  
   \begin{minipage}{.5\textwidth}  
  {\scriptsize
    \begin{align}
 & \max \limits_{\left(z; \alpha; \beta\right) \in \mathbb{R}_{\geq 0}^{4n+1}} \; z - \sum\limits_{\xi\in P\cup Q}\left(\alpha_{\xi}+\beta_{\xi}\right)    \notag \\
 &\notag\\
  \textrm{s.t.}\;\; & \forall p \in P, \;\;z+\textcolor{black}{p_{k_2}}+\omega_{k_2}\textcolor{black}{-p_{k_1}}-\omega_{k_1}\leq \alpha_{p}, \notag\\
 &\;\; \;\;\;\;\;\;\;\; \;\;\;\;\;\;\;\; \omega_{k_2}-\omega_{k_1}\leq p_{k_1}-p_{k_2} +\beta_{p}, \notag\\
   &\;\; \;\;\;\;\;\;\;\; \;\;\;\;\;\;\;\;  \;\; \;\omega_{l}-\omega_{k_2}\leq p_{k_2}-p_{l}\notag\\
  & \;\; \;\;\;\;\;\;\;\; \;\;\;\;\;\;\;\;\;  \;\;\;\;\;\;\;\;\; \;\;\;\;\;\;\;\;\; \;\;\;\;\; \;\;\;\;\;   \;\;\; \;\;\;\;     (\forall l \neq k_1, k_2), \notag\\
  & \notag\\
 &  \forall q \in  Q, \;\;z+\textcolor{black}{q_{k_1}}+\omega_{k_1}\textcolor{black}{-q_{k_2}}-\omega_{k_2}\leq \alpha_{q}, \notag\\
  &\;\; \;\;\;\;\;\;\;\; \;\;\;\;\;\;\;\;\omega_{k_1}-\omega_{k_2}\leq q_{k_2}-q_{k_1} +\beta_{q}, \notag\\
  &\;\; \;\;\;\;\;\;\;\; \;\;\;\;\;\;\;\;  \;\; \;\omega_{l}-\omega_{k_1}\leq q_{k_1}-q_{l} \notag\\
  & \;\; \;\;\;\;\;\;\;\; \;\;\;\;\;\;\;\;\;  \;\;\;\;\;\;\;\;\; \;\;\;\;\;\;\;\;\; \;\;\;\;\; \;\;\;\;\;   \;\;\; \;\;\;\;     (\forall l \neq k_1, k_2).  \notag\\
  &
   \tag{{\bf LP 3}} \label{equation:lp3}  
  %
  \end{align}
  }
  \end{minipage}
 \begin{minipage}{.5\textwidth}
  {\scriptsize
   \begin{align} 
  & \max \limits_{\left(z; \alpha; \beta; \gamma \right) \in \mathbb{R}_{\geq 0}^{6n+1}} \; z - \sum\limits_{\xi\in P\cup Q}\left(\alpha_{\xi}+\beta_{\xi}\right) \notag\\
  &\;\; \;\;\;\;\;\;\;\; \;\;\;\;\;\;\;\; \;\;\;\;\;\;\;-\sum\limits_{p\in P}\gamma_{p, i_Q}-\sum\limits_{q\in Q}\gamma_{q, i_P}    \notag \\
  \textrm{s.t.}&\;\;  \forall p \in P, \;\;z+\textcolor{black}{p_{j_P}}+\omega_{j_P}\textcolor{black}{-p_{i_P}}-\omega_{i_P}\leq \alpha_{p}, \notag\\
                                               &\;\; \;\;\;\;\;\;\;\; \;\;\;\;\;\;\;\; \omega_{j_P}-\omega_{i_P}\leq p_{i_P}-p_{j_P} +\beta_{p}, \notag\\
                                                & \;\; \;\;\;\;\;\;\;\; \;\;\;\;\;\;\;\;\; \;\;\omega_{i_Q}-\omega_{j}\leq p_{j}-p_{i_Q}+\gamma_{p, i_Q}, \notag\\
    & \;\; \;\;\;\;\;\;\;\; \;\;\;\;\;\;\;\;\;\;\;\;\; \; \omega_{l}-\omega_{j}\leq p_{j}-p_{l} \notag \\  
     & \;\; \;\;\;\;\;\;\;\; \;\;\;\;\;\;\;\;\;  \;\;\;\;\;\;\;\;\; \;\;\;\;\;\;\;\;\; \;\;\;\;\;  \;\;\;\;\;  \;\;\; (\forall l \neq i_P, j, i_Q),\notag\\
&  \forall q \in  Q, \;\;z+\textcolor{black}{q_{j_Q}}+\omega_{j_Q}\textcolor{black}{-q_{i_Q}}-\omega_{i_Q}\leq \alpha_{q}, \notag\\
                        &\;\; \;\;\;\;\;\;\;\; \;\;\;\;\;\;\;\; \omega_{j_Q}-\omega_{i_Q}\leq q_{i_Q}-q_{j_Q} +\beta_{q}, \notag\\
  &\;\; \;\;\;\;\;\;\;\; \;\;\;\;\;\;\;\;\;\;\; \omega_{i_P}-\omega_{j}\leq q_{j}-q_{i_P}+\gamma_{q, i_P}, \notag\\
& \;\; \;\;\;\;\;\;\;\; \;\;\;\;\;\;\;\;\;\;\;\;\;\; \omega_{l}-\omega_{j}\leq q_{j}-q_{l}  \notag \\
& \;\; \;\;\;\;\;\;\;\; \;\;\;\;\;\;\;\;\;  \;\;\;\;\;\;\;\;\; \;\;\;\;\;\;\;\;\; \;\;\;\;\; \;\;\;\;\;   \;\;\;   (\forall l \neq i_P, j, i_Q).  \tag{{\bf LP 4}} \label{equation:lp5}
  %
  \end{align}
  }
\end{minipage}

  \section{Algorithms}\label{sec:algorithm}

                            In this section, we develop Algorithms \ref{alg:16}--\ref{alg:14} according to the simplified linear programmings \eqref{equation:lp7}--\eqref{equation:lp5} respectively for computing an optimal  tropical hyperplane, which separates two categories of phylogenetic trees.         More details  on the performance of these algorithms can be found   in Section \ref{experiment}.  
                            
        Briefly,   the input of each algorithm includes a pair of  sets $P$ and $Q$  ($P\cap Q=\emptyset$ and $|P|=|Q|=n$), a test set $T$ and indices $i_P, j_P, i_Q, j_Q$ for formulating the corresponding linear programming. 
  As what has been defined in Section \ref{sec:th}, the sets $P$ and $Q$ are associated with a training dataset 
  $\{(x^{(1)}, y_1), \ldots, (x^{(2n)}, y_{2n})\}\subset \R^d \!/ \R \one \times \{0, 1\}$ such that 
 if $y_k=0$, then $x^{(k)}\in P$ and if $y_k=1$, then $x^{(k)}\in Q$.  Here, in these algorithms, we simply call $P$ and $Q$ training sets. 
 The test set $T$ is a finite subset of $\R^d \!/ \R \one$. The indices 
 $i_P, j_P, i_Q, j_Q$ are all from $\{1, \ldots, d\}$ and they satisfy $i_P\neq j_P$, $i_Q\neq j_Q$, and $i_P\neq i_Q$.
   There are two main steps in each algorihm:
        
 \noindent
     {\bf Step 1.}    In Algorithms \ref{alg:16}--\ref{alg:14}, for the input sets $P$ and $Q$ and indices $i_P, j_P, i_Q, j_Q$, we solve the corresponding linear programming (\eqref{equation:lp7}--\eqref{equation:lp5} respectively) and obtain the normal vector $\omega$ of an optimal  tropical hyperplane, which separates the two categories of data $P$ and $Q$. 

 \noindent     
     {\bf Step 2. } After that, for each point $t$ from the test set $T$,  we add $t$ into the set $\tilde{P}$ or $\tilde{Q}$ (that means we classify the point $t$ as  the category $P$ or $Q$) according to which sector of $H_{\omega}$ the point $t$ is located in.   
 As a result, the test set $T$ will be divided into two subsets $\tilde{P}$ and $\tilde{Q}$, and the output of each algorithm is the optimal normal vector $\omega$ and 
 a  partition of  the test set:  $\tilde{P}$ and $\tilde{Q}$.

Below, we give more details for the above Step 2.  The key of this step is to decide which category a  point $t$ from the test set should go once we have the optimal $H_{\omega}$ solved from the linear programming.  The point $t$ might be located  in a closed sector,  or on an intersection of many different closed sectors of $H_{\omega}$
 (in comparison, for a soft margin classical  SVM, a point from the test set might be simply in one of two open half-spaces  determined by an optimal hyperplane, or on the hyperplane). 
 Remark that for a tropical hyperplane in $\mathbb R^d \!/\mathbb R {\bf
    1}$,  there are $d$ closed sectors and $2^d-d-1$ possible intersections of different closed sectors.   
 So far, we do not prove any criteria on how to classify a point according to its  location. Here, according to substantial experiments on simulated data  generated by the multispecies coalescent process,  we propose an effective  strategy in Algorithms \ref{alg:16}--\ref{alg:14} as follows. Since our input training data $P$ and $Q$ are generated by  the  multispecies coalescent process,
 we also input two numbers $C$ and $\eta$  to these algorithms, where $C$ denotes the ratio of the depth of the species tree to the effective population size in the  multispecies coalescent process (see \eqref{Cvalue} in Section 
 \ref{experiment}), and $\eta$ is a threshold (experiments show that a real number between $4$ and $5$ is a good choice for $\eta$). 
In each algorithm,  for the input data $P$ and $Q$, we provide two ways to classify a point from the test set according to the relative values of $C$ and $\eta$. 
For instance, in Algorithm \ref{alg:15},  the variable ${\mathcal I^*}$ in  Line \ref{Istar3} has $4$ possible values: 
$\{k_1, k_2\}$, $\{k_1\}$, $\{k_2\}$, or $\emptyset$.  That respectively means the current point $t\in T$ read by Line \ref{loop3} is located on the intersection of 
$\overline{S}_{\omega}^{k_1}$ and $\overline{S}_{\omega}^{k_2}$, in  the difference 
 $\overline{S}_{\omega}^{k_1}\backslash \overline{S}_{\omega}^{k_2}$, in the difference  $\overline{S}_{\omega}^{k_2}\backslash \overline{S}_{\omega}^{k_1}$, or other cases. 
When the input $C$ is not larger than the input $\eta$, we apply one method to classify $t$ (see Lines \ref{smallcp3}--\ref{smallcq3}), and when $C$ is larger than $\eta$, we apply another method (see Lines \ref{largecp3}--\ref{largecq3}). The other three algorithms are similarly designed. 
 Experimental results show that our algorithms give good accuracy rate for random simulated data (see Figure \ref{fig:accuracy} in Section \ref{experiment}). 
                            

      
           \begin{algorithm}[t]
                                       \tiny
                                       \DontPrintSemicolon
                                       \LinesNumbered
                                       \SetKwInOut{Input}{input}
                                       \SetKwInOut{Output}{output}
                                       \Input{Training sets: $P$, $Q$; ~~Test set: $T$; ~~Indices: pairwise distinct $i_P$, $i_Q$, $j_P$, $j_Q$;~~Threshold: $\eta>0$;~~Parameter: $C>0$
                                          (for the multispecies coalescent process)
                                       }
                                       \Output{               
                                    Optimal normal vector: $\omega$;~~
                                    A partition of $T$: $\tilde P$, $\tilde Q$ such that $\tilde P\cup \tilde Q=T$ and $\tilde P\cap \tilde Q=\emptyset$ 
                                                           }
                                       Solve the linear programming \eqref{equation:lp7} for input data sets $P$, $Q$ and indices $i_P$, $i_Q$, $j_P$, $j_Q$\; 
                                       $\omega \leftarrow$ optimal $\omega$ such that  the objective function in  \eqref{equation:lp7}  reaches its optimal value\;
                                       $\tilde P\leftarrow \emptyset$, ~~ $\tilde Q\leftarrow \emptyset$\;
                                       \For{each $t \in T$}
                                              {
                                                   ${\mathcal I}\leftarrow$ the set of indices  $\{i|\omega_i+t_i=\max \{\omega_k+t_k| 1\leq k\leq d\}, 1\leq i\leq d\}$\;
                                                  ${\mathcal I}^*\leftarrow$ ${\mathcal I}\cap \{i_P,  i_Q, j_P, j_Q\}$\;
                                                      \If{$C\leq \eta$ \nllabel{loop1}}{
                                                  {\bf if} {${\mathcal I}^*=\{i_P\}, \{j_P\}, \{i_P, j_P\}, \{i_P, i_Q\}, \{j_P, i_Q\}, \{j_P, j_Q\}, \{j_P, i_Q, j_Q\}$, or $\{i_P, j_P, i_Q, j_Q\}$} {\bf then} {Add $t$ into $\tilde P$}\;
                                                    {\bf if} {${\mathcal I}^*=\{i_Q\}, \{j_Q\}, \{i_Q, j_Q\}, \{i_P, j_Q\}, \{i_P, j_P, i_Q\}, \{i_P, j_P, j_Q\}, \{i_P, i_Q, j_Q\}$, or $\emptyset$} {\bf then} {Add $t$ into $\tilde Q$}\; 
                                                    }
                                                    \If{$C> \eta$}{  
                                                      {\bf if} {${\mathcal I}^*=\{i_P\}, \{j_P\}, \{i_P, j_P\}, \{i_P, i_Q\},  \{j_P, j_Q\}, \{i_P, i_Q, j_Q\}, \{j_P, i_Q, j_Q\}$, or $\{i_P, j_P, i_Q, j_Q\}$} {\bf then} {Add $t$ into $\tilde P$}\;
                                                    {\bf if} {${\mathcal I}^*=\{i_Q\}, \{j_Q\}, \{i_Q, j_Q\}, \{i_P, j_Q\}, \{j_P, i_Q\}, \{i_P, j_P, i_Q\}, \{i_P, j_P, j_Q\}$, or $\emptyset$ } {\bf then} {Add $t$ into $\tilde Q$\nllabel{loop1end}}\; 
                                                                                                            }

                                               } 
                                        {\bf return} $\omega$, $\tilde P$, $\tilde Q$        
                                      
                                       \caption{{\bf Tropical Classifier via LP 1}}\label{alg:16}
                                       \end{algorithm}

         \begin{algorithm}[t]
                                       \tiny
                                       \DontPrintSemicolon
                                       \LinesNumbered
                                       \SetKwInOut{Input}{input}
                                       \SetKwInOut{Output}{output}
                                    \Input{Training sets: $P$, $Q$; ~~Test set: $T$; ~~Indices: pairwise distinct $i_P$, $i_Q$, $j_P$ ($j_Q=i_P$);~~
                                    Threshold: $\eta>0$;~~Parameter: $C>0$
                                          (for the multispecies coalescent process)
                                      }
                                       \Output{               
                                    Optimal  normal vector: $\omega$;~~
                                    A partition of $T$: $\tilde P$, $\tilde Q$ such that $\tilde P\cup \tilde Q=T$ and $\tilde P\cap \tilde Q=\emptyset$ 
                                                         }                 
                                        Solve the linear programming \eqref{equation:lp6} for input data sets $P$, $Q$ and indices $i_P$, $i_Q$, $j_P$\; 
                                       $\omega \leftarrow$ optimal $\omega$ such that  the objective function in  \eqref{equation:lp6}  reaches its optimal value\;  
                                         $\tilde P\leftarrow \emptyset$, ~~ $\tilde Q\leftarrow \emptyset$\;
                                       \For{each $t \in T$}
                                              {
                                              ${\mathcal I}\leftarrow$ the set of indices  $\{i|\omega_i+t_i=\max \{\omega_k+t_k| 1\leq k\leq d\}, 1\leq i\leq d\}$\;
                                                  ${\mathcal I}^*\leftarrow$  ${\mathcal I}\cap \{i_P,  i_Q, j_P\}$\;
                                                      \If{$C\leq \eta$}{
                                                  {\bf if} {${\mathcal I}^*=\{i_P\}, \{j_P\}, \{i_Q\}$, or $\{i_Q, j_P\}$} {\bf then} {Add $t$ into $\tilde P$}\;
                                                    {\bf if} {${\mathcal I}^*=\{i_P, j_P\}, \{i_P, i_Q\}, \{i_P, i_Q, j_P\}$, or $\emptyset$} {\bf then} {Add $t$ into $\tilde Q$}\; 
                                                    }
                                                    \If{$C> \eta$}{  
                                                      {\bf if} {${\mathcal I}^*=\{i_P\}, \{i_P, j_P\}, \{i_P, i_Q, j_P\}$, or $\emptyset$} {\bf then} {Add $t$ into $\tilde P$}\;
                                                    {\bf if} {${\mathcal I}^*=\{j_P\}, \{i_Q\}, \{i_P, i_Q\}$, or $\{i_Q, j_P\}$} {\bf then} {Add $t$ into $\tilde Q$}\; 
                                                                                                            }
                                               
                                               } 
                                        {\bf return} $\omega$, $\tilde P$, $\tilde Q$       
                                       \caption{{\bf Tropical Classifier via LP 2}}\label{alg:17}
                                       \end{algorithm}

                             
                                   \begin{algorithm}[t]
                                       \tiny
                                       \DontPrintSemicolon
                                       \LinesNumbered
                                       \SetKwInOut{Input}{input}
                                       \SetKwInOut{Output}{output}
                                          \Input{Training sets: $P$, $Q$; ~~Test set: $T$; ~~Indices:
                                     distinct  $k_1$, $k_2$ ($i_P=j_Q=k_1$, $i_Q=j_P=k_2$); ~~Threshold: $\eta>0$;~~Parameter: $C>0$
                                          (for the multispecies coalescent process)}
                                       \Output{               
                                    Optimal  normal vector: $\omega$;~~
                                    A partition of $T$: $\tilde P$, $\tilde Q$ such that $\tilde P\cup \tilde Q=T$ and $\tilde P\cap \tilde Q=\emptyset$ 
                                                           }
                                       Solve the linear programming \eqref{equation:lp3} for input data sets $P$, $Q$ and indices $k_1$, $k_2$\; 
                                       $\omega \leftarrow$ optimal $\omega$ such that  the objective function in  \eqref{equation:lp3}  reaches its optimal value\;
                                         $\tilde P\leftarrow \emptyset$, ~~ $\tilde Q\leftarrow \emptyset$\;
                                       \For{each $t \in T$\nllabel{loop3}}
                                              {
                                               ${\mathcal I}\leftarrow$ the set of indices  $\{i|\omega_i+t_i=\max \{\omega_k+t_k| 1\leq k\leq d\}, 1\leq i\leq d\}$\;
                                                  ${\mathcal I}^*\leftarrow$  ${\mathcal I}\cap \{k_1, k_2\}$\nllabel{Istar3}\;
                                                     \If{$C\leq \eta$}{
                                                  {\bf if} {${\mathcal I}^*=\{k_1\}$, or $\{k_1, k_2\}$} {\bf then} {Add $t$ into $\tilde P$}\nllabel{smallcp3}\;
                                                    {\bf if} {${\mathcal I}^*=\{k_2\}$, or $\emptyset$} {\bf then} {Add $t$ into $\tilde Q$}\nllabel{smallcq3}\;
                                                    }
                                                      \If{$C> \eta$}{
                                                  {\bf if} {${\mathcal I}^*=\{k_1\}$, or $\emptyset$} {\bf then} {Add $t$ into $\tilde P$}\nllabel{largecp3}\;
                                                    {\bf if} {${\mathcal I}^*=\{k_2\}$, or $\{k_1, k_2\}$} {\bf then} {Add $t$ into $\tilde Q$}\nllabel{largecq3}\;
                                                    }
                                               } 
                                        {\bf return} $\omega$, $\tilde P$, $\tilde Q$       
                                       \caption{{\bf Tropical Classifier via LP 3}}\label{alg:15}
                                       \end{algorithm}
                                       
                                          \begin{algorithm}[h]
                                       \tiny
                                       \DontPrintSemicolon
                                       \LinesNumbered
                                       \SetKwInOut{Input}{input}
                                       \SetKwInOut{Output}{output}
                                          \Input{Training sets: $P$, $Q$; ~~Test set: $T$; ~~Indices: pairwise distinct $i_P$, $i_Q$, $j$ ($j_P=j_Q=j$); ~~Threshold: $\eta>0$;~~Parameter: $C>0$
                                          (for the multispecies coalescent process)
                                      }
                                       \Output{               
                                    Optimal  normal vector: $\omega$;~~
                                    A partition of $T$: $\tilde P$, $\tilde Q$ such that $\tilde P\cup \tilde Q=T$ and $\tilde P\cap \tilde Q=\emptyset$ 
                                                           }
                                                             Solve the linear programming \eqref{equation:lp5} for input data sets $P$, $Q$ and indices $i_P$, $i_Q$, $j$\; 
                                       $\omega \leftarrow$ optimal $\omega$ such that  the objective function in  \eqref{equation:lp5}  reaches its optimal value\; 
                                         $\tilde P\leftarrow \emptyset$, ~~ $\tilde Q\leftarrow \emptyset$\;
                                       \For{each $t \in T$}
                                              {
                                                 ${\mathcal I}\leftarrow$ the set of indices  $\{i|\omega_i+t_i=\max \{\omega_k+t_k| 1\leq k\leq d\}, 1\leq i\leq d\}$\;
                                                  ${\mathcal I}^*\leftarrow$  ${\mathcal I}\cap \{i_P, i_Q, j\}$\;
                                                      \If{$C\leq \eta$}{
                                                  {\bf if} {${\mathcal I}^*=\{i_Q\}, \{i_P, i_Q\}, \{i_P, j\}$, or $\{j\}$} {\bf then} {Add $t$ into $\tilde P$}\;
                                                    {\bf if} {${\mathcal I}^*=\{i_P\}, \{i_Q, j\},  \{i_P, i_Q, j\}$, or $\emptyset$} {\bf then} {Add $t$ into $\tilde Q$}\; 
                                                    }
                                                    \If{$C> \eta$}{  
                                                      {\bf if} {${\mathcal I}^*=\{i_P\}, \{i_P, i_Q\}, \{i_P, i_Q, j\}$, or $\emptyset$} {\bf then} {Add $t$ into $\tilde P$}\;
                                                    {\bf if} {${\mathcal I}^*=\{i_Q\}, \{i_P, j\}, \{i_Q, j\}$, or $\{j\}$} {\bf then} {Add $t$ into $\tilde Q$}\; 
                                                                                                            }
                                               } 
                                        {\bf return} $\omega$, $\tilde P$, $\tilde Q$   
                                       
                                       \caption{{\bf Tropical Classifier via LP 4}}\label{alg:14}
                                       \end{algorithm}

\section{Implementation and Experiment}\label{experiment} 
In this section, we apply Algorithms \ref{alg:16}--\ref{alg:14} implemented in {\tt R} (see "Algorithm 1. R"-"Algorithm 4. R" in Table \ref{table:files}) and a classical SVM in the {\tt R}  package {\tt e1071} \cite{classicalSVM} to simulated data sets of gene trees generated by the multispecies coalescent process.  

First, we describe how we generated the simulated data. We set the parameters for the multispecies coalescent process, species depth ($Depth$) under the multispecies coalescent model  and effective population size ($Population$) as:
  \begin{equation}\label{Cvalue}
{Depth} \;= \;{Population} \times C,
\end{equation}
where $C$ is a constant, which takes a given value from $$\{0.2, 0.4, 0.6, 0.8,1, 1.2, 2.4, 3.6, 4.8, 6, 8, 10\}.$$ 
In this simulation studies, we fixed $Population = 10,000$ and determine $Depth$ by Equation \eqref{Cvalue}.
For each value of $C$, we generate a pair of  simulated data sets $P$ and $Q$ by the following steps:
  \begin{enumerate}
\item\label{step1} We generate $100$ species trees with $5$ leaves under the Yule process by {\tt Mesquite} \cite{mesquite}. 
\item\label{step2}  Given a species tree generated in Step \ref{step1}, we  generate $100$ gene trees with $5$ leaves each under the coalescent model. We set $P$ a set of these $100$ gene trees. 
\item\label{step3} By repeating Step \ref{step2} with another species tree generated in Step \ref{step1}, we obtain another set of $100$ gene trees and we set this set of gene trees   $Q$. 
\item\label{step4}  We convert gene trees in $P$ generated by Step \ref{step2} and gene trees in $Q$ generated by Step \ref{step3} into ultrametrics of length $\binom{5}{2}=10$ by the {\tt R} package {\tt ape} \cite{ape}.
\end{enumerate}
  \begin{figure}[h]
\centering
\includegraphics[width=0.32\textwidth, height=0.3\textwidth]{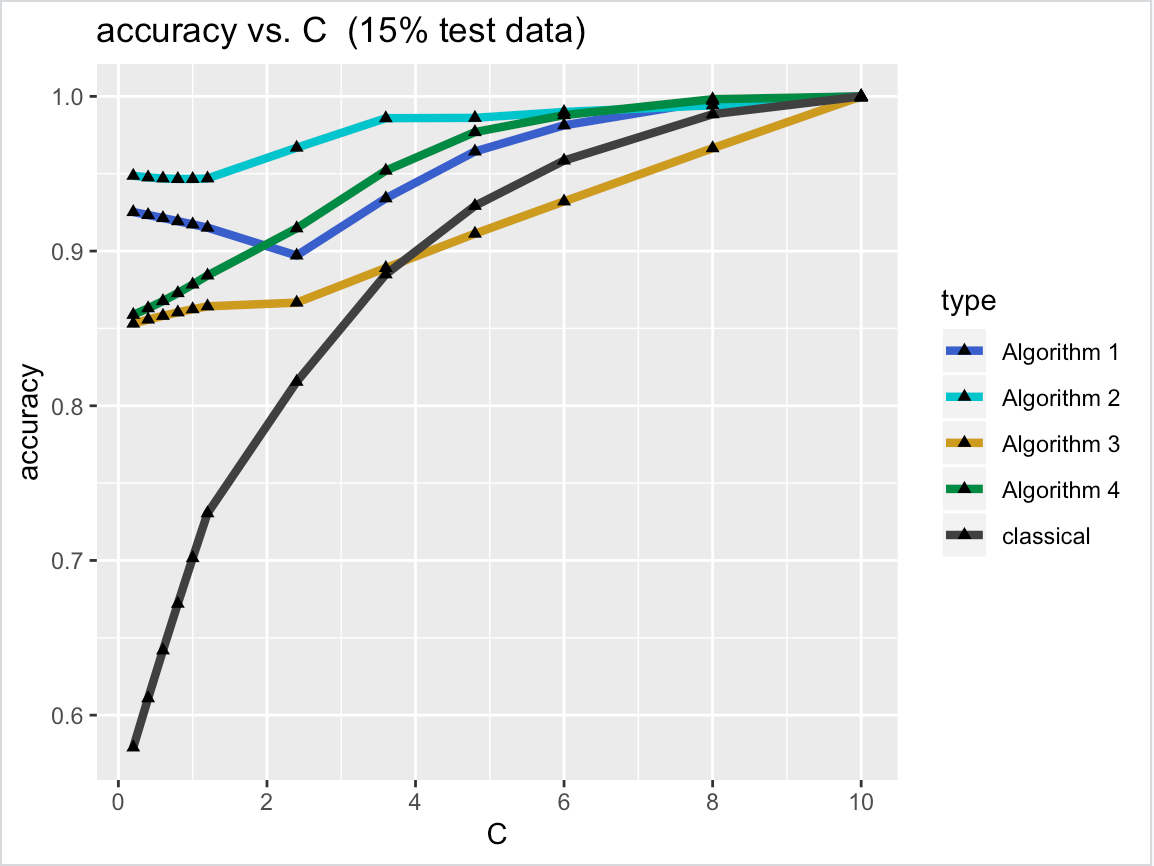}
\includegraphics[width=0.32\textwidth, height=0.3\textwidth]{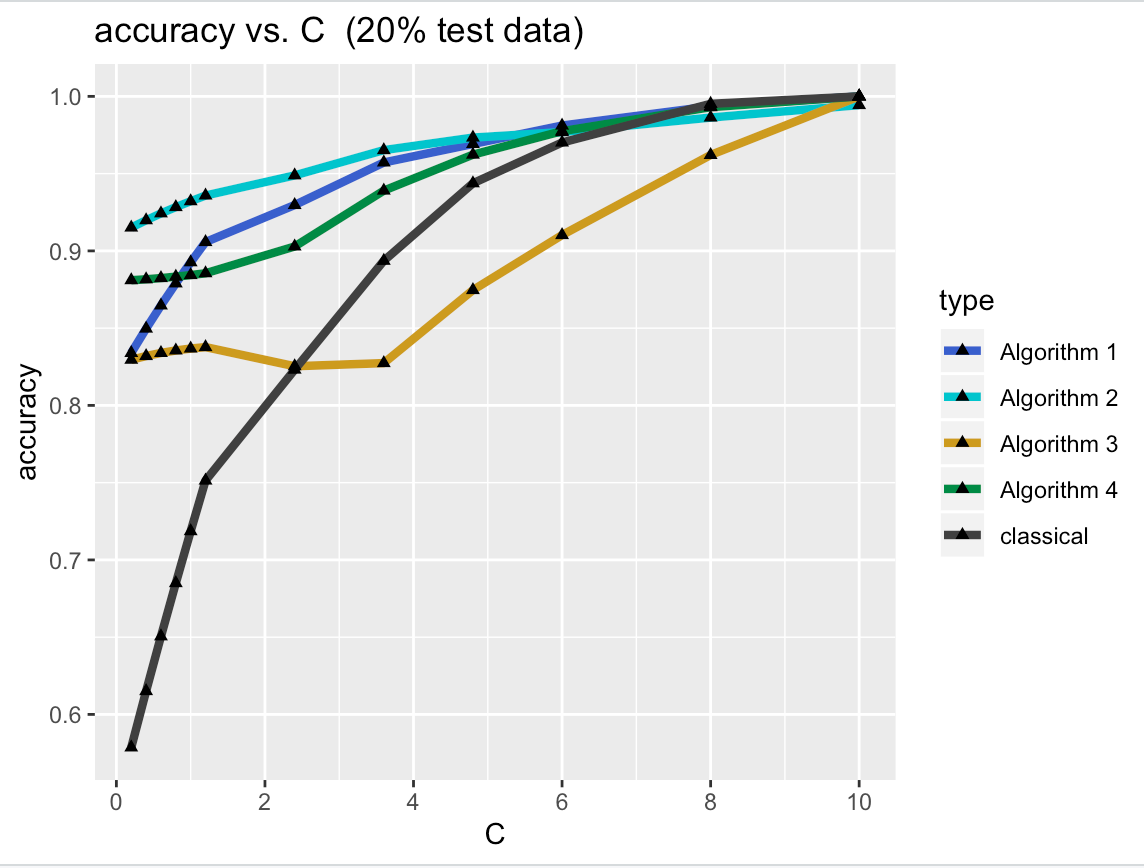}
\includegraphics[width=0.32\textwidth, height=0.3\textwidth]{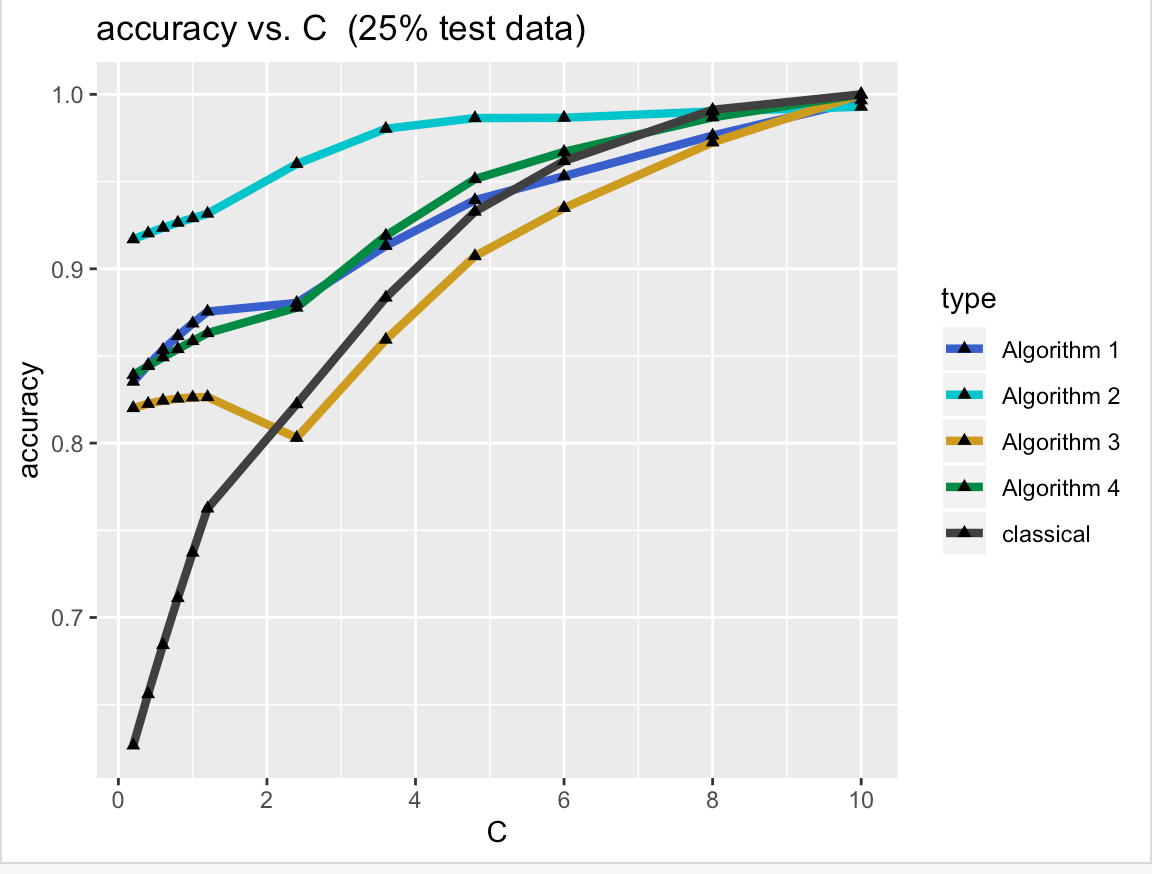}
\caption{Accuracy rates for different proportions of points: $15\%$, $20\%$ and $25\%$, respectively. The y-axis represents the accuracy rates. 
   All of them are based
  on simulated data sets generated under the multispecies coalescent processes via {\tt Mesquite}.  The x-axis shows the ratios of the species depth to effective population size of the coalescent processes. 
  We used the {\tt ggplot2} package \cite{ggplot} to plot them. All lines were smoothed by the Lowess function in {\tt R}.}
\label{fig:accuracy}
\end{figure}

We applied (soft margin)  tropical SVMs  and classical SVMs  to the  simulated data sets generated by the procedure described above and computational results are shown in Figure \ref{fig:accuracy}. 
For each pair of sets $P$ and $Q$, we chose 
a proportion ($15\%$, $20\%$, or $25\%$) of random points from $P$ and $Q$ respectively as our test set, and the rest points in $P$ and $Q$ form a training dataset.   
To obtain the curve marked as ``classical" in Figure \ref{fig:accuracy}, we applied  classical SVMs to the simulated data sets and we recorded their accuracy rates. To obtain the curves marked as ``Alogirhm \ref{alg:16}"-``Algorithm \ref{alg:14}" in Figure \ref{fig:accuracy}, for each training dataset (according to its related $C$), 
we applied Algorithms \ref{alg:16} to \ref{alg:14} respectively to obtain classifications with the test set, where the input threshold $\eta$ is $4.8$. 
Note that for each $C$ and for each proportion ($15\%$, $20\%$, or $25\%$), we randomly sampled the test sets $10$ different times, and we recorded the best accuracy rate among the $10$ times of computation. Figure \ref{fig:accuracy} can be produced by running "Graph Producer.R" (see Table \ref{table:files}). 
The 
input training sets, test set and indices ($i_P, j_P, i_Q, j_Q$)  of these algorithms can be found online (see the folders "Genetree Data" and "Assignment" in Table \ref{table:files}).

Figure \ref{fig:accuracy} shows that when the value of $C$ is small, tropical SVMs give much higher accuracy rates than that of classical SVMs. More specifically,  in all three figures, all
Algorithms \ref{alg:16}--\ref{alg:14} have much better accuracy rates than that of classical SVMs when 
$C$ is between $0$ and $2$. And Algorithm \ref{alg:17}  has the best  accuracy rate when 
$C$ is less than $6$. Algorithm \ref{alg:15} does not behave as well as the other three Algorithms. A possible reason is that it only uses two sectors (indexed by $\{k_1, k_2\}$) to classify the training dataset, while 
Algorithm \ref{alg:16}, Algorithm \ref{alg:17} and Algorithm \ref{alg:14} use four, three and three sectors (indexed by $\{i_P, i_Q, j_P, j_Q\}$, $\{i_P, j_P, i_Q\}$ and $\{i_P, i_Q, j\}$), respectively.  
One may ask why Algorithm \ref{alg:16} does not behave better than 
Algorithm \ref{alg:17} even though Algorithm \ref{alg:16} uses more sectors. Our explanation is that the method of classifying test points presented in Algorithm \ref{alg:16} (Lines \ref{loop1}-\ref{loop1end}) might not be the best way for the corresponding linear programming problem \eqref{equation:lp7}.  

  From Figure \ref{fig:accuracy}, we observed that accuracy rates tend to improve as $C$ increased in overall.  This can be explained by the nature of the multispecies coalescent process.  When $C$    is small, then a gene tree generated by the coalescent process is not constrained by the tree topology of the species tree so that a gene tree becomes a random tree. Therefore, the two sets $P$ and $Q$ of gene trees tend to be overlapped over the space of ultrametrics.   When $C$ is large, then the tree topologies of gene trees under a fixed species tree tend to be strongly correlated.  Therefore, the two sets $P$ and $Q$ of gene trees tend to be well separated in the space of ultrametrics. 

  Also, in order to show the performance of algorithms in Section \ref{sec:algorithm}, we recorded the computation time in seconds for each algorithm and classical SVMs shown in Table \ref{table:timing}. 
  Each computational time in Table \ref{table:timing} is the average time over $10$ times of computation for the same input.  Here,
   the input training data  sets $P$ and $Q$ have 
  $170$ gene trees with $5$ leaves in total, and the test set $T$ have $30$ gene trees. These data sets are generated by the same 
  multispecies coalescent process (for $C=10$ and $Population = 10,000$) as before. 
  
\begin{table}[h!]
\centering
\caption{Computational time (s: second) of each method.}\label{table:timing}
\begin{tabular}{||c c c c c||} 
\hline
Algorithm \ref{alg:16}  &  Algorithm \ref{alg:17} & Algorithm \ref{alg:15}  &  Algorithm \ref{alg:14}  & Classical \\ [0.5ex] 
\hline\hline
$0.052s$ & $0.033s$ & $0.012s$ & $0.037s$ & $0.0041s$  \\[1ex]
\hline
\end{tabular}
\end{table}

\section{Discussion}\label{sec:discussion}
Here we focused on a tropical SVM, a tropical hyperplane over the tropical projective torus, which separates data points from different categories into  sectors. We formulated both hard margin and soft margin tropical SVMs as linear programming problems.  For tropical SVMs,  we need to go through exponentially many linear programming problems in terms of the dimension of the tropical projective torus and the sample size of the input data to separate data points.  Therefore, in this paper, we explored a simper case when 
all points from the same category are staying in the same sector of a separating tropical hyperplane. 
For the hard margin tropical SVMs, we proved the necessary and sufficient conditions to separate two tropically separable sets and  we showed the explicit formula of an optimal solution for a feasible linear programming problem.  For soft margin tropical SVMs, we simplified the linear programming problems by studying their properties, and    
 we developed algorithms to compute a soft margin tropical SVM from data in the tropical projective torus. We compared our methods (implemented in {\tt R}) with the svm() funciton from the {\tt e1071} package \cite{classicalSVM} for the simulated data generated by the multispecies coalescent processes \cite{coalescent}.  

In general we have to go through exponentially many linear programming problems to find soft margin  and hard margin  tropical SVMs in our algorithms. However, we do not know exactly the time complexity of a tropical SVM.  
\begin{problem}
What is the time complexity of a hard margin tropical SVM over the tropical projective torus? How about the time complexity of a soft margin tropical SVM over the tropical projective torus?  Is it a NP-hard problem? 
\end{problem}

In addition, in this paper, we focused on tropical SVMs over the tropical projective torus $\mathbb R^{N \choose 2} \!/\mathbb R {\bf 1}$ not over the space of ultrametrics.  Recall that the space of ultrametrics is an union of $N - 2$ dimensional polyhedral cones in $\mathbb R^{N \choose 2} \!/\mathbb R {\bf 1}$.
In our simulation studies, we generated points using the multispecies coalescent model.  These trees are equidistant trees and then they are converted to ultrametrics in the space of ultrametrics.   Our algorithms return tropical SVMs with dimension ${N \choose 2} - 2$ over the tropical projective torus $\mathbb R^{N \choose 2} \!/\mathbb R {\bf 1}$ and then we used them to separate points within the space of ultrametrics, the union of $N-2$ dimensional polyhedral cones subset of $\mathbb R^{N \choose 2} \!/\mathbb R {\bf 1}$. 
In the previous section, our simulations showed tropical SVMs over $\mathbb R^{N \choose 2} \!/\mathbb R {\bf 1}$ worked very well to separate points living inside of the union of $N-2$ dimensional polyhedral cones while the classical SVMs did not work well.  We think there must be some geometrical explanations why tropical SVMs worked well.  
\begin{problem}
Can we describe a tropical SVM over the tropical projective torus $\mathbb R^{N \choose 2} \!/\mathbb R {\bf 1}$ geometrically how it separates points in the space of ultrametrics, the union of $N-2$ dimensional polyhedral cones?  
\end{problem}

Also  we are interested in developing algorithms to compute hard margin tropical and soft margin SVMs over the space of ultrametrics, the union of $N-2$ dimensional polyhedral cones, subset of the tropical projective torus $\mathbb R^{N \choose 2} \!/\mathbb R {\bf 1}$. Since the space of ultrametrics is a tropical linear space over the tropical projective torus $\mathbb R^{N \choose 2} \!/\mathbb R {\bf 1}$,  tropical SVMs should be a tropical linear space with the dimension $N-2$.  As Yoshida et. al in \cite{YZZ} defined a tropical principal components over the space of ultrametrics as vertices of a tropical polytope over the space of ultrametrics and Page et. al showed properties of tropical PCAs over the space of ultrametrics in \cite{page2019tropical}, we might be able to define a tropical SVM over the space of ultrametrics as a tropical polytope over the space of ultrametrics.  
\begin{problem}
Can we define hard and soft margin tropical SVMs over the space of ultrametrics?  If so then how can we formulate them as an optimization problem?
\end{problem}


\begin{appendix}
\section{Technical Details}\label{appA}
\subsection{Proof of Theorem \ref{thm:16}}
\begin{proof}
For any $p\in P$, by the assumptions $i(p)=i_P$ and $j(p)=j_P$, and by \eqref{equation:253}--\eqref{equation:254}, we have:
$$
\begin{array}{cccc}
    \omega_{j_P}-\omega_{i_{P}}\leq p_{i_{P}}-p_{j_P}, &  \text{and}\;\;  \omega_{l} -\omega_{j_P}\leq p_{j_P}-p_{l},\;\; \forall l\neq i_P, j_P.
  \end{array}
$$
By the assumption that the four numbers $i_P, j_P, i_Q$ and $j_Q$ are pairwise distinct, 
there exists $l\neq i_{P},j_P$ such that $l=i_{Q}$ or $l=j_Q$.  Similarly, for any $q \in Q$, by \eqref{equation:253}--\eqref{equation:254},  we have
$$
\begin{array}{cccc}
   \omega_{j_Q}-\omega_{i_{Q}}\leq  q_{i_{Q}}-q_{j_Q}, &  \text{and}\;\;
 \omega_{l}-\omega_{j_Q}\leq    q_{j_Q} - q_{l}, \;\;\forall l\neq i_Q, j_Q, 
  \end{array}
$$
and there exists $l\neq i_{Q}, j_Q$ such that $l=i_P$ or $l=j_{P}$.
So for any $p\in P$, we have

\noindent\begin{tabularx}{\textwidth}{@{}XXX@{}}
\begin{equation}\label{proof:thm1611}
\omega_{j_P}-\omega_{i_{P}}\leq p_{i_{P}}-p_{j_P}, 
\end{equation}&
\begin{equation}\label{proof:thm1612}
\omega_{i_Q} -\omega_{j_P}\leq p_{j_P}-p_{i_Q}, 
\end{equation}&
\begin{equation}\label{proof:thm1613}
\omega_{j_Q}-\omega_{j_P} \leq  p_{j_P}-p_{j_Q}.
\end{equation}
\end{tabularx}
and for any $q\in Q$, 

\noindent\begin{tabularx}{\textwidth}{@{}XXX@{}}
\begin{equation}\label{proof:thm1621}
\omega_{j_Q}-\omega_{i_{Q}}\leq  q_{i_{Q}}-q_{j_Q},
\end{equation}&
\begin{equation}\label{proof:thm1622}
 \omega_{i_P}-\omega_{j_Q}\leq    q_{j_Q} - q_{i_P}, 
\end{equation}&
\begin{equation}\label{proof:thm1623}
 \omega_{j_P}-\omega_{j_Q} \leq  q_{j_Q}-q_{j_P}.
\end{equation}
\end{tabularx}
By adding  \eqref{proof:thm1612} and  \eqref{proof:thm1621}, we have 
\begin{equation}\label{proof:thm165}
\omega_{j_{Q}}-\omega_{j_{P}}\; \leq \;\min\limits_{q\in Q}\{q_{i_{Q}}-q_{j_{Q}}\}+\min\limits_{p\in P}\{p_{j_{P}}-p_{i_{Q}}\}. 
\end{equation}
By adding \eqref{proof:thm1611} and  \eqref{proof:thm1622},  we have 
\begin{align}\label{proof:thm166}
\max_{p\in P}\{p_{j_{P}}-p_{i_{P}}\} + \max_{q\in Q}\{q_{i_{P}}-q_{j_{Q}}\} \; \leq \;\omega_{j_{Q}}-\omega_{j_{P}}.
\end{align}
By \eqref{proof:thm1613}, \eqref{proof:thm1623},  \eqref{proof:thm165} and \eqref{proof:thm166}, the inequality \eqref{eq:thm163} holds. Therefore, if 
the linear programming \eqref{equation:251}--\eqref{equation:254} has a feasible solution, then 
we have  
\eqref{eq:thm163}.

On the other hand, if we have \eqref{eq:thm163}, then
there exist real numbers $\omega_{j_Q}$ and $\omega_{j_P}$ such that the inequalities \eqref{proof:thm1613}, \eqref{proof:thm1623}, 
\eqref{proof:thm165} and  \eqref{proof:thm166} hold. 
Notice that  the  inequality \eqref{proof:thm165} is equivalent to 
\begin{align*}
\omega_{j_{Q}} - \min\limits_{q\in Q}\{q_{i_{Q}}-q_{j_{Q}}\} \leq  \omega_{j_{P}}+\min\limits_{p\in P}\{p_{j_{P}}-p_{i_{Q}}\} .
\end{align*}
So, there exists a number $\omega_{i_Q}$ such that 

\noindent\begin{tabularx}{\textwidth}{@{}XXX@{}}
\begin{equation}\label{proof:thm1691}
\omega_{j_{Q}} - \min\limits_{q\in Q}\{q_{i_{Q}}-q_{j_{Q}}\} \leq \omega_{i_Q}, 
\end{equation}&
\begin{equation}\label{proof:thm1692}
\omega_{i_Q}\leq\omega_{j_{P}}+\min\limits_{p\in P}\{p_{j_{P}}-p_{i_{Q}}\} .
\end{equation}
\end{tabularx}
Symmetrically,  the inequality  \eqref{proof:thm166} is equivalent to 
\begin{align*}
 \max_{p\in P}\{p_{j_{P}}-p_{i_{P}}\}  + \omega_{j_{P}}\leq \omega_{j_{Q}} - \max_{q\in Q}\{q_{i_{P}}-q_{j_{Q}}\}.
\end{align*}
So, there exists a number $\omega_{i_P}$ such that 

\noindent\begin{tabularx}{\textwidth}{@{}XXX@{}}
\begin{equation}\label{proof:thm1671}
 \max_{p\in P}\{p_{j_{P}}-p_{i_{P}}\}  + \omega_{j_{P}}\leq  \omega_{i_P},
\end{equation}&
\begin{equation}\label{proof:thm1672}
\omega_{i_P}\leq \omega_{j_{Q}} - \max_{q\in Q}\{q_{i_{P}}-q_{j_{Q}}\}.
\end{equation}
\end{tabularx}
The  inequality \eqref{proof:thm1671} can be rewritten as 
\begin{align}\label{proof:thm168}
 \max_{p\in P}\{p_{j_{P}}-p_{i_{P}}\}  \leq  \omega_{i_P}-\omega_{j_{P}} \;\Leftrightarrow \; \omega_{j_P}-\omega_{i_{P}} \leq \min\limits_{p\in P}\{p_{i_{P}}-p_{j_{P}}\}  .
 \end{align}
The  inequality  \eqref{proof:thm1672} can be rewritten as 
\begin{align}\label{proof:thm1610}
 \omega_{i_{P}}-\omega_{j_{Q}} \leq - \max_{q\in Q}\{q_{i_{P}}-q_{j_{Q}}\}
 \;\Leftrightarrow \; \omega_{i_P}-\omega_{j_{Q}} \leq \min\limits_{q\in Q}\{q_{j_{Q}}-q_{i_{P}}\}  .
 \end{align}
 By \eqref{proof:thm1691} and  \eqref{proof:thm168}, the inequality 
 \eqref{equation:253} holds. 
 By \eqref{proof:thm1613}, \eqref{proof:thm1623},   \eqref{proof:thm1692} and \eqref{proof:thm1610},  
the inequality \eqref{equation:254} holds for 
$l= i_Q, j_Q$ when $\xi\in P$, or for $l=i_P, j_P$ when $\xi\in Q$. 
For $l\neq i_Q, j_Q$ when $\xi\in P$, or for $l\neq i_P,  j_P$ when $\xi\in Q$, there always exist sufficiently small numbers for  $\omega_{l}$ such that the inequality \eqref{equation:254} holds. So the inequality \eqref{eq:thm163} guarantees the feasibility of the inequalities \eqref{equation:253} and \eqref{equation:254}. Notice that once \eqref{equation:253} and \eqref{equation:254} are feasible, there is always a non-negative number $z$ such that
\eqref{equation:252} holds. So, if we have \eqref{eq:thm163}, then the linear programming \eqref{equation:251}--\eqref{equation:254} has a feasible solution.

If a feasible solution exists, then by \eqref{equation:252}, for any feasible solution $(\omega; z)$, 
 \begin{align}\label{proof:thm16100}
    z\leq \min\limits_{p \in P}\{p_{i_P}-p_{j_P}\} + \omega_{i_P}-\omega_{j_P}, \;\; z\leq \min\limits_{q \in Q}\{q_{i_Q}-q_{j_Q}\} + \omega_{i_Q}-\omega_{j_Q}.
\end{align}
So,  by \eqref{proof:thm1612} and \eqref{proof:thm1622}, and by summing up the above two inequalities, we have 
\begin{align}
2z &\;\leq \;\min\limits_{p \in P}\{p_{i_P}-p_{j_P}\} + \min\limits_{q \in Q}\{q_{i_Q}-q_{j_Q}\} + \omega_{i_P}-\omega_{j_P}+ \omega_{i_Q}-\omega_{j_Q}  \label{proof:thm16101}\\
    &\; = \;\min\limits_{p \in P}\{p_{i_P}-p_{j_P}\} + \min\limits_{q \in Q}\{q_{i_Q}-q_{j_Q}\} + (\omega_{i_Q}-\omega_{j_P})+ (\omega_{i_P}-\omega_{j_Q}) \notag \\
    &\; \leq \; \min\limits_{p \in P}\{p_{i_P}-p_{j_P}\} + \min\limits_{q \in Q}\{q_{i_Q}-q_{j_Q}\} +  \min\limits_{p\in P}\{p_{j_{P}}-{p_{i_{Q}}}\} +   \min\limits_{q\in Q}\{q_{j_{Q}}-{q_{i_{P}}}\}  \label{proof:thm16102}\\
     &   \; = \; A+ D + B + E.  \notag
\end{align}
Also, by \eqref{proof:thm1613}, \eqref{proof:thm1622} and the first inequality in \eqref{proof:thm16100},  we have
\begin{align}
z &\;\leq \; \min\limits_{p \in P}\{p_{i_P}-p_{j_P}\} + \omega_{i_P}-\omega_{j_P} \label{proof:thm16103}\\
    &\; =\; \min\limits_{p \in P}\{p_{i_P}-p_{j_P}\} + \omega_{i_P}-\omega_{j_Q} + \omega_{j_Q}-\omega_{j_P} \notag \\
      &\; = \; \min\limits_{p \in P}\{p_{i_P}-p_{j_P}\} + (\omega_{j_Q}-\omega_{j_P}) + (\omega_{i_P}-\omega_{j_Q}) \notag \\
      &\; \leq \; \min\limits_{p \in P}\{p_{i_P}-p_{j_P}\} +\min\limits_{p\in P}\{p_{j_{P}}-{p_{j_{Q}}}\} + \min\limits_{q \in Q}\{q_{j_{Q}}-q_{i_{P}}\} \label{proof:thm16104}
    \; = \; A+ C+ E. 
\end{align}
Symmetrically, by \eqref{proof:thm1612}, \eqref{proof:thm1623} and the second inequality in \eqref{proof:thm16100},  we have
\begin{align}
z &\;\leq \; \min\limits_{q \in Q}\{q_{i_Q}-q_{j_Q}\} + \omega_{i_Q}-\omega_{j_Q} \label{proof:thm16105}\\
    &\; =\; \min\limits_{q \in Q}\{q_{i_Q}-q_{j_Q}\} + (\omega_{i_Q}-\omega_{j_P}) + (\omega_{j_P}-\omega_{j_Q})\notag\\
      &\; \leq \; \min\limits_{q \in Q}\{q_{i_Q}-q_{j_Q}\} +\min\limits_{p\in P}\{p_{j_{P}}-{p_{i_{Q}}}\} + \min\limits_{q \in Q}\{q_{j_{Q}}-q_{j_{P}}\} \label{proof:thm16106}
    \; = \; D+ B+ F.
\end{align}
Hence, all the values $\frac{1}{2}\left(A+B+D+E\right)$, $A+C+E$ and $B+D+F$ are  upper bounds for $z$.

If $\frac{1}{2}\left(A+B+D+E\right)\leq \min\{A+C+E, B+D+F\}$, then we can choose a feasible $\omega$ such that 
$$\omega_{i_Q}-\omega_{j_P}=-B,\; \omega_{i_P}-\omega_{j_Q}=-E,$$
$$\omega_{j_P}-\omega_{i_P}=\frac{1}{2}\left(A-B-D-E\right), \;\omega_{j_Q}-\omega_{i_Q}=\frac{1}{2}\left(D-A-B-E\right)$$
which imply $\omega_{j_Q}-\omega_{j_P}=\frac{1}{2}\left(B+D-E-A\right)$.
For this $\omega$, the equalities
in \eqref{proof:thm16100}, \eqref{proof:thm16101} and \eqref{proof:thm16102} hold, and $z$ reaches its
optimal value $\frac{1}{2}\left(A+B+D+E\right)$. 

If $A+C+E< \min\{\frac{1}{2}\left(A+B+D+E\right),\; B+D+F\}$, then we can choose a feasible $\omega$ such that 
$$\omega_{j_Q}-\omega_{j_P}=C, \;\text{and}\; \omega_{i_P}-\omega_{j_Q}=E,$$
which imply $\omega_{j_P}-\omega_{i_P}=-C-E$.
For this $\omega$, the equalities
in \eqref{proof:thm16103} and \eqref{proof:thm16104} hold, and $z$ reaches its
optimal value $A+C+E$. 
Symmetrically, 
if 
\[B+D+F<\min \{\frac{1}{2}\left(A+B+D+E\right),\; A+C+E\},\] then we can choose a feasible $\omega$ such that 
$$\omega_{j_P}-\omega_{j_Q}=F, \;\text{and}\; \omega_{i_Q}-\omega_{j_P}=B,$$
which imply $\omega_{j_Q}-\omega_{i_Q}=-B-F$.
For this $\omega$, the equalities 
in \eqref{proof:thm16105} and \eqref{proof:thm16106} hold, and $z$ reaches its
optimal value $B+D+F$.  $\Box$
\end{proof}

\subsection{Proof of Theorem \ref{thm:17}}
\begin{proof}
We only need to prove part (i) since part (ii) can be symmetrically argued.  
For any $p\in P$, by the assumptions $i(p)=i_P$ and $j(p)=j_P$, and by \eqref{equation:253}--\eqref{equation:254}, we have:
$$
\begin{array}{cccc}
    \omega_{j_P}-\omega_{i_{P}}\leq p_{i_{P}}-p_{j_P}, &  \text{and}\;\;  \omega_{l} -\omega_{j_P}\leq p_{j_P}-p_{l},\;\; \forall l\neq i_P, j_P.
  \end{array}
$$
By \eqref{equation:255}, $i_{P}\neq i_{Q}$. Note that we assume $i_{Q}\neq j_P$.
So, there exists $l\neq i_{P},j_P$ such that $l=i_{Q}$.  For any $q \in Q$, by \eqref{equation:253}--\eqref{equation:254},  we have
$$
\begin{array}{cccc}
   \omega_{j_Q}-\omega_{i_{Q}}\leq  q_{i_{Q}}-q_{j_Q}, &  \text{and}\;\;
 \omega_{l}-\omega_{j_Q}\leq    q_{j_Q} - q_{l}, \;\;\forall l\neq i_Q, j_Q.
  \end{array}
$$
By the definition of $i(p)$ and $j(p)$, we have $i_P\neq j_P$. So we have $j_Q\neq j_P$ since we assume that $i_P=j_Q$. Notice again that we assume $i_{Q}\neq j_P$.  Hence, there exists $l\neq i_{Q}, j_Q$ such that $l=j_{P}$.
So, for any $p\in P$, 

\noindent\begin{tabularx}{\textwidth}{@{}XXX@{}}
\begin{equation}\label{proof:thm1711}
\omega_{j_P}-\omega_{i_{P}}\leq p_{i_{P}}-p_{j_P},
\end{equation}&
\begin{equation}\label{proof:thm1712}
\omega_{i_Q} -\omega_{j_P}\leq p_{j_P}-p_{i_Q},
\end{equation}
\end{tabularx}
and for any $q\in Q$,

\noindent\begin{tabularx}{\textwidth}{@{}XXX@{}}
\begin{equation}\label{proof:thm1721}
\omega_{j_Q}-\omega_{i_{Q}}\leq  q_{i_{Q}}-q_{j_Q},
\end{equation}&
\begin{equation}\label{proof:thm1722}
 \omega_{j_P}-\omega_{j_Q}\leq    q_{j_Q} - q_{j_P}.
\end{equation}
\end{tabularx}
If $i_P=j_Q$, then by   \eqref{proof:thm1711} and \eqref{proof:thm1722}, we have 
\begin{align}\label{proof:thm173}
\max_{\xi\in P\cup Q} \{\xi_{j_P}-\xi_{i_P}\} \;\leq\; \omega_{i_P}-\omega_{j_P}.
\end{align}
If $i_P=j_Q$, then by adding  \eqref{proof:thm1712} and \eqref{proof:thm1721}, we have 
\begin{align}\label{proof:thm174}
\omega_{i_P}-\omega_{j_P}\;\leq\; \min\limits_{p\in P} \{p_{j_P}-p_{i_Q}\}+\min\limits_{q\in Q} \{ q_{i_{Q}}-q_{i_P}\}.
\end{align}
So, if $i_P=j_Q$, then by \eqref{proof:thm173} and \eqref{proof:thm174}, we have \eqref{eq:thm171}. 

On the other hand, if we have \eqref{eq:thm171}, then
there exist real numbers $\omega_{i_P}$ and $\omega_{j_P}$ such that
\eqref{proof:thm173} and  \eqref{proof:thm174} hold. 
By \eqref{proof:thm173}, we have \eqref{proof:thm1711} and \eqref{proof:thm1722}.
Let $\omega_{i_Q}=\omega_{i_P}+\min\limits_{p\in P}\{p_{j_{P}}-p_{i_{Q}}\}$. Then we have the 
inequality  \eqref{proof:thm1712}, and by \eqref{proof:thm174}, we have 
\begin{align*}
\max_{q\in  Q} \{q_{i_P}-q_{i_Q}\} \;\leq\; \omega_{i_Q}-\omega_{i_P}, 
\end{align*}
which is equivalent to \eqref{proof:thm1721}. 
By  \eqref{proof:thm1711}, \eqref{proof:thm1712}, \eqref{proof:thm1721} and \eqref{proof:thm1722},  the inequality \eqref{equation:253} holds, and 
the the inequality \eqref{equation:254} holds for 
$l= i_Q$ when $\xi\in P$, or for $l=j_P$ when $\xi\in Q$. 
For $l\neq i_Q$ when $\xi\in P$, or for $l\neq j_P$ when $\xi\in Q$, there always exist sufficiently small numbers for  $\omega_{l}$ such that the inequality \eqref{equation:254} holds. So the inequality \eqref{eq:thm171} guarantees the feasibility of the inequalities \eqref{equation:253} and \eqref{equation:254}. Notice that once \eqref{equation:253} and \eqref{equation:254} are feasible, there is always a non-negative number $z$ such that
\eqref{equation:252} holds. So, if we have \eqref{eq:thm171}, then the linear programming \eqref{equation:251}--\eqref{equation:254} has a feasible solution.

If a feasible solution exists, then by \eqref{equation:252},
 \begin{align}\label{proof:thm175}
    z\leq \min\limits_{p \in P}\{p_{i_P}-p_{j_P}\} + \omega_{i_P}-\omega_{j_P}, \;\; z\leq \min\limits_{q \in Q}\{q_{i_Q}-q_{j_Q}\} + \omega_{i_Q}-\omega_{j_Q}.
\end{align}
Note $i_P=j_Q$. So,  by summing up the above two inequalities and by \eqref{proof:thm1712}, 
\begin{align}
2z &\;\leq \;\min\limits_{p \in P}\{p_{i_P}-p_{j_P}\} + \min\limits_{q \in Q}\{q_{i_Q}-q_{j_Q}\} + \omega_{i_Q}-\omega_{j_P} \label{proof:thm1761} \\
    &\; \leq \; \min\limits_{p \in P}\{p_{i_P}-p_{j_P}\} + \min\limits_{q \in Q}\{q_{i_Q}-q_{j_Q}\} + \min\limits_{p\in P}\{p_{j_{P}}-{p_{i_{Q}}}\} 
    \; = \; A'+ C+ B. \label{proof:thm1762}
                                       \end{align}
                                       Also, by \eqref{proof:thm1712}, \eqref{proof:thm173} and the second inequality in \eqref{proof:thm175},  we have
                                       \begin{align}
                                                                              z &\;\leq \; \min\limits_{q \in Q}\{q_{i_Q}-q_{j_Q}\} + \omega_{i_Q}-\omega_{j_Q} \label{proof:thm1771}
\\
                                       &\; =\; \min\limits_{q \in Q}\{q_{i_Q}-q_{j_Q}\} + \omega_{i_Q}-\omega_{j_P} + \omega_{j_P}-\omega_{j_Q} \notag \\
                                       &\; = \; \min\limits_{q \in Q}\{q_{i_Q}-q_{j_Q}\} + (\omega_{i_Q}-\omega_{j_P}) + (\omega_{j_P}-\omega_{i_P}) \notag \\
                                       &\; \leq \; \min\limits_{q \in Q}\{q_{i_Q}-q_{j_Q}\} +\min\limits_{p\in P}\{p_{j_{P}}-{p_{i_{Q}}}\} + \min\limits_{\xi \in P\cup Q}\{\xi_{i_{P}}-\xi_{j_{P}}\}
                                       \; = \; C+ B+ A. \label{proof:thm1772}
                                       \end{align}
                                       Hence, both values $\frac{1}{2}\left(A'+B+C\right)$ and $A+B+C$ are  upper bounds for $z$.

If $\frac{1}{2}\left(A'+B+C\right)\leq A+B+C$, then we can choose a feasible $\omega$ such that 
                                       $$\omega_{i_P}-\omega_{j_P}=\frac{1}{2}\left(B+C-A'\right), \;\text{and}\; \omega_{i_Q}=\omega_{j_P}+B,$$
which imply $\omega_{i_Q}-\omega_{j_Q}=\frac{1}{2}\left(A'+B-C\right)$. 
For this $\omega$, the equalities
in \eqref{proof:thm175}--\eqref{proof:thm1762} hold, and $z$ reaches its
optimal value $\frac{1}{2}\left(A'+B+C\right)$. 

If $A+B+C< \frac{1}{2}\left(A'+B+C\right)$, then we can choose a feasible $\omega$ such that 
$$\omega_{i_Q}-\omega_{j_Q}=A+B, \;\text{and}\; \omega_{i_Q}=\omega_{j_P}+B,$$
which imply $\omega_{i_P}-\omega_{j_P}=-A$.
For this $\omega$, the equalities
in \eqref{proof:thm1771}--\eqref{proof:thm1772} hold, and $z$ reaches its
optimal value $A+B+C$.   $\Box$
\end{proof}

\subsection{Proof of Theorem \ref{thm:15}}
 \begin{proof}
For any $p\in P$, and for any $q\in Q$, by the assumptions $i(p) = j(q)=k_1$ and $i(q)=j(p)=k_2$, and by \eqref{equation:253}, we have:
    $$
    \begin{array}{cccc}
  \omega_{k_2}-\omega_{k_1}\leq p_{k_1}-p_{k_2},
  &  \text{and}\;\;  \omega_{k_1}-\omega_{k_2}\leq  q_{k_2}-q_{k_1}
  \end{array}
  $$
    Therefore,
  \begin{align}\label{proof:151}
  \max_{p \in P}\{p_{k_2}-p_{k_1}\} \leq \omega_{k_1}-\omega_{k_2}\leq \min\limits_{q \in Q}\{q_{k_2}-q_{k_1}\}.
  \end{align}
  So if the linear programming \eqref{equation:251}--\eqref{equation:254} has a feasible solution, then we have \eqref{eq:thm151}.

    On the other hand, if we have \eqref{eq:thm151}, then
  there exist real numbers $\omega_{k_1}$ and $\omega_{k_2}$ such that
  \eqref{proof:151}, and hence \eqref{equation:253} holds. For $l\neq k_1, k_2$, there always exist sufficiently small numbers for  $\omega_{l}$ such that the inequality \eqref{equation:254} holds. So inequalities \eqref{eq:thm151} guarantees the feasibility of the inequalities \eqref{equation:253} and \eqref{equation:254}. Notice that once \eqref{equation:253} and \eqref{equation:254} are feasible, there is always a non-negative number $z$ such that 
  \eqref{equation:252} holds. So, if we have \eqref{eq:thm151}, then the linear programming \eqref{equation:251}--\eqref{equation:254} has a feasible solution.
  
  If a feasible solution exists, then by \eqref{equation:252},
  \begin{align}\label{proof:152}
  z\leq \min\limits_{p \in P}\{p_{k_1}-p_{k_2}\} + \omega_{k_1}-\omega_{k_2}, \;\; z\leq \min\limits_{q \in Q}\{q_{k_2}-q_{k_1}\} + \omega_{k_2}-\omega_{k_1}.
  \end{align}
  By summing up the above two inequalities,
  $$2z \leq \min\limits_{p \in P}\{p_{k_1}-p_{k_2}\} + \min\limits_{q \in Q}\{q_{k_2}-q_{k_1}\}.$$
    So the value \eqref{eq:thm152} is an upper bound for $z$.
  Note that we can choose a feasible $\omega$ such that 
  $$\omega_{k_1}-\omega_{k_2}=\frac{1}{2}\left(\min\limits_{q \in Q}\{q_{k_2}-q_{k_1}\}\textcolor{black}{+}\max_{p \in P}\{p_{k_2}-p_{k_1}\} \right),$$ which satisfies the inequality \eqref{proof:151}. For this $\omega$, the equalities
  in \eqref{proof:152} hold, and $z$ reaches its
  the optimal value \eqref{eq:thm152}.  $\Box$
\end{proof}

\subsection{Proof of Theorem \ref{theorem76}}
\begin{proof}
                            For any $p\in P$, by the assumptions $i(p)=i_P$ and $j(p)=j$, and by \eqref{equation:253}--\eqref{equation:254}, we have:
                              $$
                              \begin{array}{cccc}
                            \omega_{j}-\omega_{i_{P}}\leq p_{i_{P}}-p_{j}, &  \text{and}\;\;  \omega_{l} -\omega_{j}\leq p_{j}-p_{l},\;\; \forall l\neq i_P, j.
                            \end{array}
                            $$
                              By \eqref{equation:255}, $i_{P}\neq i_{Q}$. By the definition of $i(q)$ and $j(q)$, $i(q)\neq j(q)$, and hence $i_{Q}\neq j$. 
                            So, there exists $l\neq i_{P},j$ such that $l=i_{Q}$.  Similarly, for any $q \in Q$, we have
                            $$
                              \begin{array}{cccc}
                            \omega_{j}-\omega_{i_{Q}}\leq  q_{i_{Q}}-q_{j}, &  \text{and}\;\;
                            \omega_{l}-\omega_{j}\leq    q_{j} - q_{l}, \;\;\forall l\neq i_Q, j, 
                            \end{array}
                            $$
                              and there exists $l\neq i_{Q}, j$ such that $l=i_{P}$.
                            So we have
                            \begin{align}\label{proof:1}
                            \begin{array}{cccc}
                            \forall p\in P, \;\;\omega_{j}-\omega_{i_{P}}\leq p_{i_{P}}-p_{j}, &  \text{and}\;\;  \omega_{i_Q} -\omega_{j}\leq p_{j}-p_{i_Q},
                            \end{array}
                            \end{align}
                            and
                            \begin{align}\label{proof:2}
                            \begin{array}{cccc}
                            \forall q\in Q, \;\; \omega_{j}-\omega_{i_{Q}}\leq  q_{i_{Q}}-q_{j}, &  \text{and}\;\;
                            \omega_{i_P}-\omega_{j}\leq    q_{j} - q_{i_P}.
                            \end{array}
                            \end{align}
                            Therefore,
                            \begin{align}\label{proof:3}
                            \max_{q\in Q}\{q_{i_{P}}-q_{j}\} \leq \omega_{j}-\omega_{i_{P}}\leq \min\limits_{p\in P}\{p_{i_{P}}-p_{j}\}
                            \end{align}
                            and
                            \begin{align}\label{proof:4} 
                            \max_{p\in P}\{p_{i_{Q}}-p_{j}\} \leq \omega_{j}-\omega_{i_{Q}}\leq \min\limits_{q\in Q}\{q_{i_{Q}}-q_{j}\}.
                            \end{align}
                            So, if the linear programming \eqref{equation:251}--\eqref{equation:254} has a feasible solution, then we have \eqref{eq:thm141} and \eqref{eq:thm142}.

                            On the other hand, if we have \eqref{eq:thm141} and \eqref{eq:thm142}, then
                            there exist real numbers $\omega_{j}, \omega_{i_{P}}$ and $\omega_{i_{Q}}$ such that
                            \eqref{proof:3} and \eqref{proof:4} hold, and hence \eqref{proof:1} and \eqref{proof:2} hold. For $l\neq i_{P},j, i_Q$, there always exist sufficiently small numbers for  $\omega_{l}$ such that the inequality \eqref{equation:254} holds. So inequalities \eqref{eq:thm141} and \eqref{eq:thm142} guarantee the feasibility of the inequalities \eqref{equation:253} and \eqref{equation:254}. Notice that once \eqref{equation:253} and \eqref{equation:254} are feasible, there is always a non-negative number $z$ such that 
                            \eqref{equation:252} holds. So, if we have \eqref{eq:thm141} and \eqref{eq:thm142}, then the linear programming \eqref{equation:251}--\eqref{equation:254} has a feasible solution.
                              
                              If a feasible solution exists, then by \eqref{equation:252} and the first inequality in \eqref{proof:3}, 
                            $$z \leq \min\limits_{p\in P}\{p_{i_{P}}-p_{j}\} + \omega_{i_P}-\omega_{j}\leq \min\limits_{p\in P}\{p_{i_{P}}-p_{j}\} + \min\limits_{q\in Q}\{q_{j}-q_{i_{P}}\},\;\; \text{and}$$
                              by \eqref{equation:252} and the first inequality in \eqref{proof:4}, 
                            $$z \leq \min\limits_{q\in Q}\{q_{i_{Q}}-q_{j}\} + \omega_{i_Q}-\omega_{j}
                            \leq \min\limits_{q\in Q}\{q_{i_{Q}}-q_{j}\} + \min\limits_{p\in P}\{p_{j}-p_{i_{Q}}\}.\;\;\;\;\;\;$$
                              So the maximum $z$ is given by \eqref{eq:thm143},
                            and this optimal value is reached when
                            either $\omega_{i_{P}}-\omega_{j}= \min\limits_{q\in Q}\{q_{j}-q_{i_{P}}\}$, or $\omega_{i_Q}-\omega_{j}= \min\limits_{p\in P}\{p_{j}-p_{i_{Q}}\}$. $\Box$
\end{proof}
\section{Files in the  Online Repository}\label{appB}
Table \ref{table:files} lists all files at the online repository: \url{https://github.com/HoujieWang/Tropical-SVM}

\begin{table}[h!]
\centering
\caption{Supporting Information files}
\label{table:files}
{\tiny
\begin{tabular}{||c c c||} 
 \hline
 Name & File Type & Results \\ [0.5ex] 
 \hline\hline
 \texttt{unbounded.RData} & \texttt{RData} & Remark \ref{rmk:boundedsoft2} \\
 \texttt{Algorithm1.R} & \texttt{R} & Algorithm \ref{alg:16} \\ 
 \texttt{Algorithm2.R} & \texttt{R} & Algorithm \ref{alg:17} \\ 
 \texttt{Algorithm3.R} & \texttt{R} & Algorithm \ref{alg:15} \\ 
 \texttt{Algorithm4.R} & \texttt{R} & Algorithm \ref{alg:14} \\ 
 \texttt{graph producer.R} & \texttt{R} & Figure \ref{fig:accuracy}\\
 \texttt{Genetree Data} & folder & Figure \ref{fig:accuracy}\\
 
 \texttt{Genetree Data/data\_15\%.RData} & \texttt{R Data} & Figure \ref{fig:accuracy}\\
 \texttt{Genetree Data/data\_20\%.RData} & \texttt{R Data} & Figure \ref{fig:accuracy}\\
 \texttt{Genetree Data/data\_25\%.RData} & \texttt{R Data} & Figure \ref{fig:accuracy}\\
 
 \texttt{Assignment} & folder & Figure \ref{fig:accuracy}\\
 
 \texttt{Assignment/asgn\_1\_15\%.RData} & \texttt{R Data} & Figure \ref{fig:accuracy}\\
 \texttt{Assignment/asgn\_2\_15\%.RData} & \texttt{R Data} & Figure \ref{fig:accuracy}\\
 \texttt{Assignment/asgn\_3\_15\%.RData} & \texttt{R Data} & Figure \ref{fig:accuracy}\\
 \texttt{Assignment/asgn\_4\_15\%.RData }& \texttt{R Data} & Figure \ref{fig:accuracy}\\
 
 \texttt{Assignment/asgn\_1\_20\%.RData} & \texttt{R Data}& Figure \ref{fig:accuracy}\\
 \texttt{Assignment/asgn\_2\_20\%.RData }& \texttt{R Data} & Figure \ref{fig:accuracy}\\
 \texttt{Assignment/asgn\_3\_20\%.RData }& \texttt{R Data} & Figure \ref{fig:accuracy}\\
 \texttt{Assignment/asgn\_4\_20\%.RData }& \texttt{R Data} & Figure \ref{fig:accuracy}\\
 
 \texttt{Assignment/asgn\_1\_25\%.RData} & \texttt{R Data} & Figure \ref{fig:accuracy}\\
 \texttt{Assignment/asgn\_2\_25\%.RData} & \texttt{R Data} & Figure \ref{fig:accuracy}\\
 \texttt{Assignment/asgn\_3\_25\%.RData }& \texttt{R Data} & Figure \ref{fig:accuracy}\\
 \texttt{Assignment/asgn\_4\_25\%.RData }& \texttt{R Data} &
 Figure \ref{fig:accuracy}\\

 \hline
\end{tabular}
}
\end{table}
\end{appendix}
{}


\begin{thebibliography}{10}

\bibitem{ane2007}
C.~An\'e, B.~Larget, DA~Baum, SD~Smith, and A~Rokas.
\newblock Bayesian estimation of concordance among gene trees.
\newblock {\em Mol Biol Evol}, 24:412--426, 2007.

\bibitem{lpsolve}
M.~Berkelaar.
\newblock lp{S}olve, r-package
  \url{https://cran.r-project.org/web/packages/lpSolve/index.html}, 2020.

\bibitem{Gartner}
B.G\"artner and M.~Jaggi.
\newblock Tropical support vector machines, 2006.

\bibitem{JARDINE1967173}
C.J. Jardine, N.~Jardine, and R.~Sibson.
\newblock The {S}tructure and {C}onstruction of {T}axonomic {H}ierarchies.
\newblock {\em Mathematical Biosciences}, 1(2):173--179, 1967.

\bibitem{J}
M.~Joswig.
\newblock Essentials of tropical combinatorics, 2017.

\bibitem{MS}
D.~Maclagan and B.~Sturmfels.
\newblock {\em Introduction to Tropical Geometry}, volume 161 of {\em Graduate
  Studies in Mathematics}.
\newblock American Mathematical Society, Providence, RI, 2015.

\bibitem{mesquite}
W.~P. Maddison and D.R. Maddison.
\newblock Mesquite: a modular system for evolutionary analysis. version 3.61,
  2019.

\bibitem{coalescent}
W.P. Maddison.
\newblock Gene trees in species trees.
\newblock {\em Systematic Biology}, 46(3):523--536, 1997.

\bibitem{classicalSVM}
David Meyer, Evgenia Dimitriadou, Kurt Hornik, Friedrich Leisch, Chih-Chung
  Chang, and Chih-Chen Lin.
\newblock R-package e1071. {A}vailable at
  \url{https://cran.r-project.org/web/packages/e1071/index.html}, 2019.

\bibitem{page2019tropical}
Robert Page, Leon Zhang, and Ruriko Yoshida.
\newblock Tropical principal component analysis on the space of ultrametrics.
\newblock {\em arXiv preprint arXiv:1911.10675}, 2019.

\bibitem{ape}
E.~Paradis and K.~Schliep.
\newblock ape 5.0: an environment for modern phylogenetics and evolutionary
  analyses in {R}.
\newblock {\em Bioinformatics}, 35:526--528, 2018.

\bibitem{Steel2003}
C.~Semple and M.~Steel.
\newblock {\em Phylogenetics}, volume~24 of {\em Oxford Lecture Series in
  Mathematics and its Applications}.
\newblock Oxford University Press, 2003.

\bibitem{SS}
D.~Speyer and B.~Sturmfels.
\newblock Tropical mathematics.
\newblock {\em Mathematics Magazine}, 82:163--173, 2009.

\bibitem{ggplot}
Hadley Wickham.
\newblock {\em ggplot2: Elegant Graphics for Data Analysis}.
\newblock Springer-Verlag New York, 2016.

\bibitem{Wilgenbusch}
JC~Wilgenbusch, DL~Warren, and DL~Swofford.
\newblock {AWTY}: {A} system for graphical exploration of mcmc convergence in
  bayesian phylogenetic inference, 2004.

\bibitem{YZZ}
L.~Zhang Yoshida, R. and X.~Zhang.
\newblock Tropical principal component analysis and its application to
  phylogenetics.
\newblock {\em Bulletin of Mathematical Biology}, 81:568--597, 2019.

\bibitem{yoshida2017tropical}
Ruriko Yoshida, Leon Zhang, and Xu~Zhang.
\newblock Tropical {P}rincipal {C}omponent {A}nalysis and its {A}pplication to
  {P}hylogenetics.
\newblock {\em arXiv:1710.02682}, 2017.

\end{thebibliography}


\begin{thebibliography}{16}
\bibitem{ane2007}
\textsc{An\'e, C.}, \textsc{Larget, B.},  \textsc{Baum, D. A.}, \textsc{Smith, S. D.} and \textsc{Rokas, A.} (2007).
Bayesian estimation of concordance among gene trees.
\textit{Mol. Biol. Evol.}
\textbf{24} 412--426.

\bibitem{lpsolve}
\textsc{Berkelaar, M.} and \textsc{others} (2020). lpSolve: Interface to Lp\_solve version 5.5 to Solve Linear/Integer Programs.
R package version 5.6.15.
\url{https://CRAN.R-project.org/package=lpSolve}.

\bibitem{Gartner}
\textsc{G\"artner, B.} and \textsc{Jaggi, M.} (2006).
Tropical support vector machines.
\textit{ACS Technical Report}.
No.: ACS-TR-362502-01.


\bibitem{J}
\textsc{Joswing, M.} (2017).
\textit{Essentials of Tropical Combinatorics}. Springer, Berlin.


\bibitem{bo2017}
\textsc{Lin, B.}, \textsc{Sturmfels, B.}, \textsc{Tang, X.} and \textsc{Yoshida, R.} Convexity in Tree Spaces. (2017). \textit{SIAM J. Discrete Math.} \textbf{31} 2015--2038.

\bibitem{maclagan2015introduction}
\textsc{Maclagan, D.} and \textsc{Sturmfels, B.} (2015). 
\textit{Introduction to Tropical Geometry}.
\textit{Grad. Stud. Math.} \textbf{161}
American Mathematical Society, Providence.

\bibitem{mesquite}
\textsc{Maddison, W. P.} and \textsc{Maddison, D. R.} (2019). 
Mesquite: A modular system for
     evolutionary analysis.  Version 3.61.
\url{http://www.mesquiteproject.org}.

\bibitem{coalescent}
\textsc{Maddison, W. P.} (1997). 
Gene trees in species trees.
\textit{Syst. Biol}. \textbf{46} 523--536.

\bibitem{classicalSVM}
\textsc{Meyer, D.}, \textsc{Dimitriadou, E.},  \textsc{Hornik, K.}, \textsc{Weingessel, A.} and \textsc{Leisch, F.} (2019). e1071: Misc Functions of the Department of Statistics, Probability
Theory Group (Formerly: E1071), TU Wien.
R package version 1.7-3.
\url{https://CRAN.R-project.org/package=e1071}.

\bibitem{monod2019}
\textsc{Monod, A.}, \textsc{Lin, B.}, \textsc{Yoshida, R.} and \textsc{Kang, Q.}(2019). Tropical geometry of phylogenetic tree space: a statistical perspective. \textit{Arxiv: https://arxiv.org/abs/1805.12400}.

\bibitem{Wilgenbusch}
\textsc{Nylander, J.}, \textsc{Wilgenbusch, J.}, \textsc{Warren, D. L.} and \textsc{Swofford, D. L.} (2007). AWTY: A system for graphical exploration of MCMC convergence in Bayesian phylogenetic inference. \textit{Bioinformatics}. \textbf{24} 581--583.



\bibitem{page2019tropical}
\textsc{Page, R.}, \textsc{Zhang, L.} and \textsc{Yoshida, R.} (2019). Tropical principal component analysis on the space of ultrametrics. \textit{Arxiv: https://arxiv.org/abs/1911.10675}.

\bibitem{ape}
\textsc{Paradis, E.} and \textsc{Schliep, K.} (2018). ape 5.0: an environment for modern phylogenetics and evolutionary analyses in R.
\textit{Bioinformatics.} \textbf{35} 526--528.

\bibitem{Steel2003}
\textsc{Semple, C.} and \textsc{Steel, M.} (2003). 
\textit{Phylogenetics}.
\textit{Oxford Lecture Ser. Math. Appl.} \textbf{24}
Oxford University Press, Oxford.

\bibitem{SS}
\textsc{Speyer, D.} and \textsc{Sturmfels, B.} (2009).
\textit{Tropical Mathematics}. \textit{Math. Mag.} \textbf{82} 163--173.

\bibitem{ggplot}
\textsc{Wickham, H.} (2016).
\textit{ggplot2: Elegant Graphics for Data Analysis}. Springer-Verlag, New York.
\url{https://ggplot2.tidyverse.org}.


\bibitem{YZZ}
\textsc{Yoshida, R.}, \textsc{Zhang, L.} and \textsc{Zhang, X.} (2019). Tropical principal component analysis and its application to phylogenetics. \textit{Bull. Math. Biol.} \textbf{81} 568--597.


\bibitem{dtamining}
\textsc{Zaki, M. J.} and \textsc{Meira, W, Jr.} (2014). \textit{Data Mining and Analysis: Fundamental Concepts and Algorithms}. Cambridge University Press, Cambridge.






\end{thebibliography}
\end{document}